\numberwithin{equation}{section}
\newtheorem{thm}{Theorem}[section]
\newtheorem{prop}[thm]{Proposition}
\newtheorem{cor}[thm]{Corollary}
\newtheorem*{cor*}{Corollary}
\newtheorem{lema}[thm]{Lemma}
\newtheorem*{lema*}{Lemma}
\newtheorem{hyp}[thm]{Hypotheses}
\theoremstyle{definition}
\newtheorem{prob}{Problem}
\newtheorem*{prob*}{Problem}
\newtheorem*{Def}{Definition}
\newtheorem{example}{Example}
\newtheorem{obs}[thm]{Remark}
\newtheorem*{obs*}{Remark}
\newtheorem*{thm*}{Theorem}
\newtheorem*{prop*}{Proposition}
\newcommand{\PI}[2]{\left\langle \,#1 , #2\, \right\rangle}
\newcommand{\K}[2]{\left[ \,#1 , #2\, \right]}
\newcommand{\set}[1]{\left\{ \,#1\, \right\}}
\newcommand{\setB}[1]{\Big\{ \,#1\, \Big\}}
\newcommand{\parentesis}[1]{\left( \,#1\, \right)}
\newcommand{\ra}{\rightarrow}
\newcommand{\x}{\times}
\newcommand{\CC}{\mathbb{C}}
\newcommand{\RR}{\mathbb{R}}
\newcommand{\NN}{\mathbb{N}}
\newcommand{\Z}{\mathbb{Z}}
\newcommand{\St}{\mathcal{S}}
\newcommand{\M}{\mathcal{M}}
\newcommand{\HH}{\mathcal{H}}
\newcommand{\KK}{\mathcal{K}}
\newcommand{\EE}{\mathcal{E}}
\newcommand{\mc}[1]{\mathcal{#1}}
\newcommand{\ol}{\overline}
\newcommand{\ort}{{[\bot]}}
\newcommand{\noi}{\noindent}
\newcommand{\sdo}{[\dotplus]}
\newcommand{\CV}{\mathcal{C}_V}
\newcommand{\PV}{\mathcal{P}^+(V)}
\newcommand{\PVV}{\mathcal{P}^-(V)}
\newcommand{\CT}{\mathcal{C}_{T}}
\newcommand{\la}{\lambda}
\newcommand{\eqnum}{\refstepcounter{equation}\textup{\tagform@{\theequation}}}
\begin{document}

\begin{frontmatter}

\title{Linear pencils and quadratic programming problems with a quadratic constraint}

\author[IAM,UBA]{Santiago Gonzalez Zerbo} 
\ead{sgzerbo@fi.uba.ar}

\author[IAM,UBA]{Alejandra Maestripieri}
\ead{amaestri@fi.uba.ar}

\author[IAM,UNLP]{Francisco Mart\'{\i}nez Per\'{\i}a}
\ead{francisco@mate.unlp.edu.ar}

\address[IAM]{Instituto Argentino de Matem\'atica ``Alberto P. Calder\'on''\\ Saavedra 15, Piso 3 (1083) Buenos Aires, Argentina}
\address[UBA]{Departamento de Matem\'atica-- Facultad de Ingenier\'{\i}a -- Universidad de Buenos Aires\\ Paseo Col\'on 850 (1063) Buenos Aires, Argentina}
\address[UNLP]{Centro de Matem\'atica de La Plata -- Fac. de Cs. Exactas -- Universidad Nacional de La Plata\\ CC 172 (1900) La Plata, Argentina}

\begin{abstract}
Given bounded selfadjoint operators $A$ and $B$ acting on a Hilbert space $\HH$, consider the linear pencil $P(\la)=A+\la B$, $\la\in\RR$.
The set of parameters $\la$ such that $P(\la)$ is a positive (semi)definite operator is characterized. These results are applied to solving
a quadratic programming problem with an equality quadratic constraint (or a QP1EQC problem).

\end{abstract}

\begin{keyword}
Linear pencil \sep positive definite operator \sep quadratic programming

\MSC[2020] 15A22\sep 47A56\sep 47B50 \sep 47B65
\end{keyword}

\end{frontmatter}

\section{Introduction}

The ideas considered in this paper are mainly motivated by the following quadratic programming problem with an equality
quadratic constraint (QP1EQC):
\begin{prob}\label{pb 0}
Given a Hilbert space $(\HH,\PI{\cdot}{\cdot})$, let $A$ and $B$ be bounded selfadjoint operators on $\HH$. Given $(w_0,z_0)\in\HH\x\HH$, analyze the existence of
\begin{equation*}
\min \PI{A(x-w_0)}{x-w_0} \quad \text{subject to} \quad \PI{B(x-z_0)}{x-z_0}=0,
\end{equation*}
and if the minimum exists, find the set of arguments at which it is attained.
\end{prob}

If $A$ and $B$ are indefinite (i.e. neither positive nor negative semidefinite) operators, this is a problem
where the objective function $x\mapsto \PI{A(x-w_0)}{x-w_0}$ is not convex while the function defining the constraint $x\mapsto\PI{B(x-z_0)}{x-z_0}$ is sign indefinite.  Quadratic programming (QP) with a
convex objective function was shown to be polynomial-time solvable. However, QP with an indefinite quadratic term is NP-hard in general. There is a lot of
literature on quadratically constrained quadratic programming (QCQP) problems, specially in the finite dimensional setting
\cite{Powell,Ye,Polik,Pencils,Park}.

The standard optimization technique, known as ``Lagrange multipliers'', consists in calculating the stationary points of the Fréchet derivatives in order to get
candidates for the solutions. In our particular case, a necessary condition for $x_0\in\HH$ to be a solution to Problem \ref{pb 0} is the existence of
$\la_0\in\RR$ such that 
\begin{align*}
(A+ \la_0 B)x_0 = Aw_0 + \la_0 Bz_0 \qquad \text{and} \qquad 
A + \la_0 B \ \text{is positive semidefinite},
\end{align*}
see e.g. \cite[\S 7.7 Thm. 2]{Luenberger} and \cite[Prop. 2.4.19]{Mardsen}.
Hence, the linear pencil $P(\la)=A+\la B$, $\la\in\RR$, is closely related to the above problem.

Therefore, we are interested in describing the conditions under which the sets below are non empty:
\begin{align}\label{conjuntos}
I_\geq (A,B) &=\{\ \rho\in\RR:\ A + \rho B\ \text{is positive semidefinite}\ \}, \\
I_> (A,B) &=\{\ \rho\in\RR:\ A + \rho B\ \text{is positive definite}\ \}. \nonumber
\end{align}
The study of necessary and sufficient conditions for the existence
of a positive (semi)definite operator in the range of $P(\la)$ has attracted a lot of interest in the finite-dimensional
case, see \cite{BB71,BY95, L02,LLB13, KV95,NY91,NY93, NV03} among others. In particular, some of the results in Section \ref{Pencils} are inspired by \cite{HLS14}.

A large number of spectral problems for operator pencils of the form $P(\la)=A + \la B$ arises in different areas of applied mathematics, where $A$ and $B$ are symmetric (or selfadjoint) operators acting in a suitable Hilbert space $\HH$, see e.g. \cite{App3, App1, App2, App4, N16} and the references therein. Moreover, there is a well developed spectral theory for operator pencils with several applications, we only mention here the classical textbooks \cite{L1, L2,Gohberg}.

Our ultimate objective is to apply the results obtained in Section \ref{Pencils} to the regularization of the following indefinite least squares problem (ILSP) with an equality quadratic constraint:

\begin{prob}\label{pb 1}
Given a Hilbert space $\HH$ and Krein spaces $(\KK,\K{\cdot}{\cdot}_\KK)$ and $(\EE,\K{\cdot}{\cdot}_{\EE})$,
let $T:\HH\ra\KK$ and $V:\HH\ra\EE$ be bounded operators. Assume that $T$ has closed range and $V$ is surjective. 
Given $(w_0,z_0)\in\KK\x\EE$, analyze the existence of
\begin{equation*}
\min\,\K{Tx-w_0}{Tx-w_0}_\KK,\textit{ \normalfont{subject to} }\K{Vx-z_0}{Vx-z_0}_{\EE}=0,
\end{equation*}
and if the minimum exists, find the set of arguments at which it is attained.
\end{prob}

The paper is organized as follows. Section \ref{preliminaries} contains some notations used along the paper as well as two classical results that are frequently used.

Given bounded selfadjoint operators $A$ and $B$ on $\HH$, in Section \ref{Pencils} we describe the sets $I_\geq(A,B)$ and $I_>(A,B)$ defined in \eqref{conjuntos}. If $Q(B)$ denotes the set of neutral vectors for the quadratic form induced by $B$, it is well-known that $I_\geq(A,B)\neq\varnothing$ if and only if 
\[
\PI{Ax}{x}\geq 0 \quad \text{for every $x\in Q(B)$},
\]
i.e. the quadratic form induced by $A$ is nonnegative on $Q(B)$. Moreover, $I_\geq(A,B)$ is a closed interval $[\la_-,\la_+]$ in $\RR$, where the boundary values $\la_-$ and $\la_+$ are given by
\begin{align*}
\lambda_-:= -\inf_{\{x\in\HH:\PI{Bx}{x}>0\}}\frac{\PI{Ax}{x}}{\PI{Bx}{x}} \quad \text{and}\quad \lambda_+:= -\sup_{\{x\in\HH:\PI{Bx}{x}<0\}}\frac{\PI{Ax}{x}}{\PI{Bx}{x}}. 
\end{align*}
$I_>(A,B)$ is trivially contained in $I_\geq(A,B)=[\la_-,\la_+]$. If $\la_-\neq\la_+$ and $I_>(A,B)$ is non empty we show that $I_\geq(A,B)=(\la_-,\la_+)$, see Theorem \ref{pencil positivo}. Also, we prove that a necessary and sufficient condition for $I_>(A,B)\neq\varnothing$ is that the quadratic form induced by $A$ is uniformly positive on $Q(B)$, see \eqref{sufi pencil posi}.


In Section \ref{QPQC} we apply the results obtained in Section \ref{Pencils} to the Tikhonov's regularization of Problem \ref{pb 1}.
Defining $L:\HH\ra\KK\x\EE$ by $Lx=(Tx,Vx)$, the regularized problem can be restated as calculating
\[
\min_{x\in\HH} \K{Lx-(w_0,z_0)}{Lx-(w_0,z_0)}_\rho,
\]
where $\K{\cdot}{\cdot}_\rho$ is an indefinite inner product in $\KK\x\EE$ depending on the chosen regularization parameter $\rho$.
Since it is the indefinite least-squares problem (without constraints) associated to the equation $Lx=(w_0,z_0)$, the existence of solutions to such problem can be characterized in terms of the range of $L$, see \cite{GMMP10_2,GMMP16}. 

After presenting some preliminaries on Krein spaces, in Subsection \ref{props rango de L} we characterize different properties of $R(L)$ in terms of the operator $T^\#T + \rho V^\#V$. Then, we consider the linear pencil $P(\la)=T^\#T + \la V^\#V$ and we analyze it in its range of positiveness $[\rho_-, \rho_+]$. Finally, we show that if $R(L)$ is a nondegenerate (resp. pseudo-regular, regular) subspace of $(\KK\x\EE,\K{\cdot}{\cdot}_{\rho_0})$ for some $\rho_0\in(\rho_-,\rho_+)$, then $R(L)$ has the same property as a  subspace of $(\KK\x\EE,\K{\cdot}{\cdot}_{\rho})$ for every $\rho\in(\rho_-,\rho_+)$.


\section{Preliminaries}\label{preliminaries}

Along this work $\HH$ denotes a complex (separable) Hilbert space. If $\mc{K}$ is another Hilbert space then $\mc{L}(\HH, \KK)$ is the vector space of bounded linear operators from $\HH$ into $\KK$ and $\mc{L}(\HH)=\mc{L}(\HH,\HH)$ stands for the algebra of bounded linear operators in $\HH$.

If $A\in \mc{L}(\HH, \KK)$ then $R(A)$ stands for the range of $A$ and $N(A)$ for its nullspace.
The next well-known result about the product of closed range operators is frequently used along the paper, see e.g. \cite{Izumino}.

\begin{prop}\label{R(AB)}
Given Hilbert spaces $\HH_1$, $\HH_2$ and $\KK$, let $A\in \mc{L}(\KK,\HH_2)$ and $B\in \mc{L}(\HH_1,\KK)$ have closed ranges. Then, $AB\in \mc{L}(\HH_1,\HH_2)$ has closed range if and only if $R(B) + N(A)$ is closed in $\KK$. 
\end{prop}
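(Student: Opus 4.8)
The plan is to use the identity $R(AB) = A(R(B))$ and reduce the problem to a bounded-below estimate for the restriction of $A$ to the closed subspace $M := R(B)$. Throughout I would invoke two standard facts. First, a bounded operator $T$ has closed range if and only if there is $c > 0$ with $\|Tx\| \geq c\,\dist(x, N(T))$ for all $x$ (positivity of the reduced minimum modulus). Second, for closed subspaces $M, N$ of a Hilbert space, the sum $M + N$ is closed if and only if the compression $P_{N^\perp}|_M$ has closed range.

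First I would record the size of $A$ on $M$. For every $x$, writing $x = x_0 + x_1$ with $x_0 \in N(A)$ and $x_1 \in N(A)^\perp$ gives $\|Ax\| = \|Ax_1\| \leq \|A\|\,\dist(x, N(A))$, while the hypothesis that $A$ has closed range yields the matching lower bound $\|Ax\| \geq c\,\dist(x, N(A))$. Hence $\|Ax\|$ is comparable to $\dist(x, N(A))$, in particular for $x \in M$. Since the nullspace of $A|_M$ is $M \cap N(A)$, applying the first fact to $A|_M$ shows that $R(AB) = A(M)$ is closed if and only if there exists $c' > 0$ with $\dist(x, N(A)) \geq c'\,\dist(x, M \cap N(A))$ for all $x \in M$; the reverse inequality is automatic because $M \cap N(A) \subseteq N(A)$.

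Next I would prove the second fact in the precise form needed. Splitting each $m \in M$ as $m = P_N m + P_{N^\perp} m$ gives $M + N = N \oplus P_{N^\perp}(M)$ with orthogonal summands; since $N$ is closed, this sum is closed exactly when $P_{N^\perp}(M)$ is closed, and conversely $P_{N^\perp}(M) = (M + N) \cap N^\perp$ is closed whenever $M + N$ is. Because $\|P_{N^\perp} x\| = \dist(x, N)$ and the kernel of $P_{N^\perp}|_M$ is $M \cap N$, the closed-range criterion for $P_{N^\perp}|_M$ reads exactly $\dist(x, N) \geq c'\,\dist(x, M \cap N)$ on $M$. Taking $N = N(A)$, this is the very inequality obtained in the previous step, so $R(AB)$ has closed range if and only if that inequality holds, if and only if $P_{N(A)^\perp}|_M$ has closed range, if and only if $R(B) + N(A)$ is closed.

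The crux I anticipate is the interplay of two different distances: the closed-range criterion for $A|_M$ is governed by $\dist(\cdot, M \cap N(A))$, whereas the norm estimate for $A$ only controls $\dist(\cdot, N(A))$. The argument succeeds because the discrepancy between these distances, encoded as a bounded-below condition for the compression $P_{N(A)^\perp}|_M$, is exactly the closedness of $R(B) + N(A)$; establishing the orthogonal decomposition $M + N = N \oplus P_{N^\perp}(M)$ together with the identity $P_{N^\perp}(M) = (M + N) \cap N^\perp$ is the main technical step.
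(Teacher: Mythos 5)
Your proof is correct and complete. Note, however, that the paper itself does not prove Proposition \ref{R(AB)}: it is stated as a well-known result and attributed to Izumino \cite{Izumino}, so there is no in-paper argument to compare yours against. On its own merits, your argument is sound: the two-sided estimate $c\,\dist(x,N(A))\leq\|Ax\|\leq\|A\|\,\dist(x,N(A))$ (valid precisely because $R(A)$ is closed) correctly converts the closed-range question for $A|_{R(B)}$ --- whose range is $R(AB)$ and whose nullspace is $R(B)\cap N(A)$ --- into the comparison $\dist(x,N(A))\geq c'\,\dist\big(x,R(B)\cap N(A)\big)$ for $x\in R(B)$; and your identities $R(B)+N(A)=N(A)\oplus P_{N(A)^\perp}(R(B))$ and $P_{N(A)^\perp}(R(B))=\big(R(B)+N(A)\big)\cap N(A)^\perp$ correctly identify that comparison with the closedness of $R(B)+N(A)$. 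Both closed-range hypotheses are used exactly where they must be: closedness of $R(B)$ makes $M=R(B)$ a Hilbert space, so the reduced-minimum-modulus criterion legitimately applies to $A|_M$ and to $P_{N(A)^\perp}|_M$, while closedness of $R(A)$ supplies the lower bound in the two-sided estimate. For comparison with the cited source: Izumino's proof proceeds through Moore--Penrose generalized inverses (closed range being equivalent to existence of a bounded generalized inverse), whereas yours is the classical ``angle between subspaces'' argument via the minimum modulus; the latter is elementary and self-contained, which is arguably preferable given that the paper uses the proposition as a black box.
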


An operator $A\in \mc{L}(\HH)$ is positive semidefinite if $\PI{Ax}{x}\geq 0$ for all $x\in\HH$; and it is positive definite if there exists $\alpha>0$ such that $\PI{Ax}{x}\geq \alpha\|x\|^2$ for every $x\in\HH$. The cone of positive semidefinite operators is denoted by $\mc{L}(\HH)^+$, and $GL(\HH)^+$ stands for the open cone of positive definite operators. 

We consider the order induced by $\mc{L}(\HH)^+$ into the real vector space of selfadjoint operators, known as L\"owner's order. If $A, B\in \mc{L}(\HH)$ are selfadjoint operators, $A\geq B$ stands for $A-B\in \mc{L}(\HH)^+$. In particular, $A\geq 0$ if and only if $A\in \mc{L}(\HH)^+$.
We say that a selfadjoint operator $A\in\mc{L}(\HH)$ is indefinite if it is neither positive nor negative semidefinite, i.e. if there exist
$x_+,x_-\in\HH$ such that $\PI{Ax_+}{x_+}>0$ and $\PI{Ax_-}{x_-}<0$.

The following result, due to R.~G.~Douglas \cite{Douglas}, characterizes
operator range inclusions. 

\begin{thm}\label{Doug}
Given Hilbert spaces $\HH$, $\mc{K}_1$, $\mc{K}_2$ and operators
$A\in \mc{L}(\mc{K}_1,\HH)$ and $B\in \mc{L}(\mc{K}_2,\HH)$, the following
conditions are equivalent:
   \begin{enumerate}[label=\roman*)]
\item the equation $AX=B$ has a solution in
$\mc{L}(\mc{K}_2,\mc{K}_1)$;

\item $R(B)\subseteq R(A)$;

\item there exists $\lambda>0$ such that $BB^*\leq \lambda AA^*$.
\end{enumerate}
\end{thm}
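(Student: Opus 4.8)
The three conditions will be shown equivalent through a short cycle of implications, two of which are immediate while the real content lies in manufacturing a \emph{bounded} solution operator. The plan is to establish $(i)\Rightarrow(ii)$, $(i)\Rightarrow(iii)$, $(iii)\Rightarrow(i)$ and $(ii)\Rightarrow(i)$; the first two are routine. Indeed, $(i)\Rightarrow(ii)$ is trivial, since $AX=B$ forces $R(B)=R(AX)\subseteq R(A)$. And $(i)\Rightarrow(iii)$ is a one-line estimate: from $AX=B$ and $XX^*\leq\|X\|^2 I$ one obtains $BB^*=AXX^*A^*\leq\|X\|^2\,AA^*$, so $\lambda=\|X\|^2$ works (taking $\lambda=1$ when $X=0$).

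The heart of the argument is $(iii)\Rightarrow(i)$. First I would rewrite the operator inequality $BB^*\leq\lambda AA^*$ in scalar form as $\|B^*h\|^2\leq\lambda\,\|A^*h\|^2$ for every $h\in\HH$, using $\PI{BB^*h}{h}=\|B^*h\|^2$ and likewise for $A$. This single inequality does two jobs at once: it shows that the assignment $A^*h\mapsto B^*h$ is well defined on $R(A^*)$ (if $A^*h=A^*h'$ then $A^*(h-h')=0$, whence $B^*(h-h')=0$), and that the resulting linear map $Y_0\colon R(A^*)\ra\mc{K}_2$ is bounded with $\|Y_0\|\leq\sqrt{\lambda}$. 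I would then extend $Y_0$ by continuity to $\ol{R(A^*)}=N(A)^{\bot}$ and set it equal to zero on $N(A)$, obtaining $Y\in\mc{L}(\mc{K}_1,\mc{K}_2)$ satisfying $YA^*=B^*$. Taking adjoints yields $AY^*=B$, so $X:=Y^*$ solves equation $(i)$.

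For $(ii)\Rightarrow(i)$ I would invoke the closed graph theorem. For each $k\in\mc{K}_2$ the inclusion $R(B)\subseteq R(A)$ guarantees a solution of $Au=Bk$; selecting the unique such $u$ lying in $N(A)^{\bot}$ defines a linear map $X$ with $AX=B$. To check boundedness, suppose $k_n\ra k$ and $Xk_n\ra v$: continuity of $A$ gives $Av=\lim A(Xk_n)=\lim Bk_n=Bk$, while $v\in N(A)^{\bot}$ because that subspace is closed, so $v$ is the minimal-norm solution of $Au=Bk$, i.e. $v=Xk$. Hence the graph of $X$ is closed and $X$ is bounded.

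The main obstacle is the construction in $(iii)\Rightarrow(i)$: extracting a genuinely bounded operator out of nothing more than a quadratic-form inequality. The point that makes it go through is that $\|B^*h\|^2\leq\lambda\,\|A^*h\|^2$ simultaneously encodes the well-definedness of $Y_0$ on $R(A^*)$ and the norm bound $\|Y_0\|\leq\sqrt{\lambda}$; once this is recognized, continuous extension to $\ol{R(A^*)}$ together with passing to adjoints completes the proof with no further estimates.
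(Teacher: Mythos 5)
The paper offers no proof of this statement: it is quoted as a classical theorem of R.~G.~Douglas, with a citation to the original article standing in place of a proof, so there is nothing internal to compare your argument against. Your proof is correct and complete, and it is essentially Douglas's original argument: the two routine implications $i)\Rightarrow ii)$ and $i)\Rightarrow iii)$; the construction for $iii)\Rightarrow i)$ of the bounded operator from the scalar inequality $\|B^*h\|^2\leq\lambda\|A^*h\|^2$ (which simultaneously gives well-definedness and the bound for $A^*h\mapsto B^*h$ on $R(A^*)$, extended by continuity and followed by taking adjoints); and the closed graph theorem applied to the minimal-norm solution map for $ii)\Rightarrow i)$.
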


An immediate consequence of this theorem is that $R(A)\subseteq R(A^{1/2})\subseteq \ol{R(A)}$ for $A\in \mc{L}(\HH)^+$.
Hence, $R(A)$ is closed if and only if $R(A)=R(A^{1/2})$.

\section{The range of positiveness of a linear operator pencil}\label{Pencils}

In this section we consider linear pencils of the form $P(\la)=A + \la B$, where $A$ and $B$ are bounded selfadjoint operators acting on a Hilbert space $(\HH,\PI{\cdot}{\cdot})$, and the parameter $\la$ runs through $\RR$. Inspired by \cite{HLS14}, we are interested in describing the conditions under which the sets below are non empty:
\begin{align*}
I_\geq (A,B) &=\{\rho\in\RR:\ A + \rho B\in \mc{L}(\HH)^+\}, \\
I_> (A,B) &=\{\rho\in\RR:\ A + \rho B\in GL(\HH)^+\}.
\end{align*}
This problem has a long history,  which is thoroughly reviewed in \cite{U79}. For matrix pencils it can be traced back to \cite{F,BB71}, and in the case of operator pencils in Hilbert spaces to \cite{C64,K,KS}. 

\smallskip

Recall the following theorem, taken from \cite{Azizov}, which is a version of
\cite[Thm. 1.1]{KS}. Denote by $Q(B)$ the set of neutral vectors for the quadratic form induced by the selfadjoint operator $B$:
\[
Q(B)=\{x\in\HH: \ \PI{Bx}{x}=0\}.
\]

\begin{thm}\label{Azizov para pencils}
Let $A,B\in \mc{L}(\HH)$ be selfadjoint operators and suppose  that $B$ is indefinite.
Also, assume that 
\begin{equation}\label{sufi pencil noneg}
\PI{Ax}{x}\geq 0 \qquad \text{for every $x\in Q(B)$}.
\end{equation}
Then, for every $y\in\HH$ such that $\PI{By}{y}<0$ and for every $z\in\HH$ such that $\PI{Bz}{z}>0$ we have that
\[
\frac{\PI{Ay}{y}}{\PI{By}{y}}\leq \frac{\PI{Az}{z}}{\PI{Bz}{z}}.
\]
Hence,
\begin{align}\label{mus}
\mu_+= &\inf_{ \{z\in\HH: \PI{Bz}{z}=1\} } \PI{Az}{z}>-\infty \quad \text{and} \\ \mu_-= &\sup_{ \{y\in\HH: \PI{By}{y}=-1\} } \PI{Ax}{x}<+\infty. \nonumber
\end{align}
Moreover, $\mu_-\leq \mu_+$ and for any $\mu\in[\mu_-, \mu_+]$ the following inequality holds:
\[
\PI{Ax}{x}\geq \mu \PI{Bx}{x} \qquad \text{for every $x\in\HH$}.
\]
\end{thm}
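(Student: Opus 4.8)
The plan is to reduce the central inequality to a statement about two real quadratic forms in one real variable, and then to read off the remaining assertions as elementary consequences. Write $a(x)=\langle Ax,x\rangle$ and $b(x)=\langle Bx,x\rangle$; both are real-valued since $A,B$ are selfadjoint. Fix $y,z$ with $b(y)<0<b(z)$; this forces $y,z$ to be linearly independent (otherwise $b(y)$ and $b(z)$ would share a sign), so the restrictions of $a$ and $b$ to $\Span_{\RR}\{y,z\}$ are genuine binary forms. I would parametrize the line through $y$ in direction $z$ and consider
\[ a(y+\xi z)=a(y)+2\xi\,\real\langle Ay,z\rangle+\xi^2 a(z), \qquad b(y+\xi z)=b(y)+2\xi\,\real\langle By,z\rangle+\xi^2 b(z), \]
two real quadratics in $\xi\in\RR$. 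Since $b(z)>0$ and the product of the roots of $\xi\mapsto b(y+\xi z)$ equals $b(y)/b(z)<0$, this quadratic has two real roots of opposite sign $\xi_1<0<\xi_2$; hence $y+\xi_1 z$ and $y+\xi_2 z$ lie in $Q(B)$, and \eqref{sufi pencil noneg} gives $a(y+\xi_i z)\ge 0$ for $i=1,2$.

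The heart of the matter is to combine these two nonnegativities so as to kill the cross term $\real\langle Ay,z\rangle$. Writing $P(\xi)=a(y+\xi z)$, I would form $\xi_2 P(\xi_1)-\xi_1 P(\xi_2)$, which is $\ge 0$ because $\xi_2>0$ and $-\xi_1>0$; using the relation $\xi_1\xi_2=b(y)/b(z)$ the cross term cancels and the expression collapses to $(\xi_2-\xi_1)\,[\,a(y)-a(z)\,b(y)/b(z)\,]$. As $\xi_2-\xi_1>0$, this gives $a(y)\,b(z)-a(z)\,b(y)\ge 0$, and dividing by the negative number $b(y)\,b(z)$ produces exactly $\frac{a(y)}{b(y)}\le\frac{a(z)}{b(z)}$. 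I expect the sign bookkeeping here — arranging the two inequalities so that they add with the correct orientation and the cross term truly cancels — to be the main, if routine, obstacle; the genuine content is simply that an \emph{indefinite} binary form must possess two null directions, each of which feeds the hypothesis.

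From the key inequality the rest follows quickly. It shows that every quotient $\frac{a(y)}{b(y)}$ with $b(y)<0$ is at most every quotient $\frac{a(z)}{b(z)}$ with $b(z)>0$; both families are nonempty because $B$ is indefinite, so each bounds the other. By homogeneity of the quotient, $\mu_+=\inf_{\{b(z)=1\}}a(z)=\inf_{\{b>0\}}\frac{a(z)}{b(z)}$ is bounded below by any $\frac{a(y)}{b(y)}$, whence $\mu_+>-\infty$; symmetrically the supremum $\mu_-=\sup_{\{b<0\}}\frac{a(y)}{b(y)}$ is bounded above by any $\frac{a(z)}{b(z)}$, whence $\mu_-<+\infty$, and the same comparison yields $\mu_-\le\mu_+$. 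For the final inequality, fix $\mu\in[\mu_-,\mu_+]$ and split on the sign of $b(x)$: if $b(x)>0$ then $\frac{a(x)}{b(x)}\ge\mu_+\ge\mu$; if $b(x)<0$ then $\frac{a(x)}{b(x)}\le\mu_-\le\mu$ and multiplying through by $b(x)<0$ reverses the sense; and if $b(x)=0$ then $x\in Q(B)$, so $a(x)\ge 0=\mu b(x)$ is precisely \eqref{sufi pencil noneg}. In each case $a(x)\ge\mu b(x)$, as required — the neutral case being exactly where the standing hypothesis re-enters.
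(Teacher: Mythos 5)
Your proof is correct, but there is no proof in the paper to compare it against: the theorem is stated there without proof, quoted from \cite{Azizov} as a version of \cite[Thm.~1.1]{KS}. Your argument is thus a self-contained replacement, and it is essentially the classical two-dimensional restriction argument behind those cited results. The steps check out: the quadratic $\xi\mapsto\PI{B(y+\xi z)}{y+\xi z}$ has leading coefficient $\PI{Bz}{z}>0$ and constant term $\PI{By}{y}<0$, and since complex-conjugate roots of a real quadratic have nonnegative product, the negative product of roots does force two real roots $\xi_1<0<\xi_2$, so the two neutral vectors $y+\xi_i z\in Q(B)$ exist and \eqref{sufi pencil noneg} applies to them. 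The algebra of the key combination is right: with $P(\xi)=\PI{A(y+\xi z)}{y+\xi z}$ one gets $\xi_2P(\xi_1)-\xi_1P(\xi_2)=(\xi_2-\xi_1)\bigl[\PI{Ay}{y}-\PI{Az}{z}\PI{By}{y}/\PI{Bz}{z}\bigr]\geq 0$ (the cross term cancels identically; the relation $\xi_1\xi_2=\PI{By}{y}/\PI{Bz}{z}$ enters only in the last term), and dividing by the negative number $\PI{By}{y}\PI{Bz}{z}$ yields the key inequality. The remaining deductions--finiteness of $\mu_\pm$, $\mu_-\leq\mu_+$, and $\PI{Ax}{x}\geq\mu\PI{Bx}{x}$ via homogeneity and the three-way sign split, with the neutral case absorbed by \eqref{sufi pencil noneg}--are routine and correctly executed.

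One caveat you should make explicit: you take $\mu_-=\sup_{\{\PI{By}{y}<0\}}\PI{Ay}{y}/\PI{By}{y}$, whereas the paper's display \eqref{mus} literally defines $\mu_-$ as $\sup_{\{\PI{By}{y}=-1\}}\PI{Ax}{x}$, which (besides the typo $x$ for $y$) is a different, and in fact incorrect, quantity: by homogeneity $\sup_{\{\PI{By}{y}<0\}}\PI{Ay}{y}/\PI{By}{y}=-\inf_{\{\PI{By}{y}=-1\}}\PI{Ay}{y}$, not $\sup_{\{\PI{By}{y}=-1\}}\PI{Ay}{y}$. For instance, with $A=I$ and $B=\mathrm{diag}(1,-1)$ on $\CC^2$ the literal formula gives $+\infty$, contradicting the assertion $\mu_-<+\infty$, while the correct value is $-1$. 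Your reading is the right one--it is the only one consistent with the theorem's final assertion and with the identities $\lambda_-=-\mu_+$, $\lambda_+=-\mu_-$ used in Proposition \ref{Azizov pencils} and in \eqref{algo_1}--but since you are proving a statement that differs from the one literally printed, say so rather than substituting the corrected formula silently.
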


This implies that \eqref{sufi pencil noneg} is a sufficient condition to guarantee that $I_\geq (A,B)\neq\varnothing$, but it is also a necessary one. The next result is a generalization of \cite[Thm.2]{HLS14}, see also \cite{hamburguer,More}.

\begin{prop}\label{Azizov pencils}
Let $A,B\in \mc{L}(\HH)$ be selfadjoint operators and suppose  that $B$ is indefinite. Then, $I_\geq (A,B)\neq\varnothing$ if and only if 
\[
\PI{Ax}{x}\geq 0 \qquad \text{for every $x\in Q(B)$}. 
\]
In this case, there exist $\lambda_-,\lambda_+\in\RR$ such that $\lambda_-\leq\lambda_+$ and $I_\geq (A,B)=[\lambda_-,\lambda_+]$.
\end{prop}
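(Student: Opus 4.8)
The plan is to prove the biconditional in two directions and then establish that the positivity set is a closed interval. For the forward implication, suppose $I_\geq(A,B) \neq \varnothing$, so there is some $\rho \in \RR$ with $A + \rho B \in \mc{L}(\HH)^+$. For any $x \in Q(B)$ we have $\PI{Bx}{x} = 0$, hence $\PI{Ax}{x} = \PI{(A + \rho B)x}{x} \geq 0$. This direction is immediate and requires no hypothesis on $B$ beyond selfadjointness.

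For the reverse implication, the strategy is to invoke Theorem \ref{Azizov para pencils} directly. Assuming $\PI{Ax}{x} \geq 0$ for every $x \in Q(B)$ (condition \eqref{sufi pencil noneg}) and that $B$ is indefinite, that theorem produces scalars $\mu_- \leq \mu_+$ such that for every $\mu \in [\mu_-, \mu_+]$ one has $\PI{Ax}{x} \geq \mu\PI{Bx}{x}$ for all $x \in \HH$, i.e. $A - \mu B \in \mc{L}(\HH)^+$. Setting $\rho := -\mu$ shows $A + \rho B \in \mc{L}(\HH)^+$, so $I_\geq(A,B) \neq \varnothing$; indeed this already exhibits a whole interval of parameters. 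I would state $\lambda_- = -\mu_+$ and $\lambda_+ = -\mu_-$, so that $[\mu_-, \mu_+]$ maps to $[\lambda_-, \lambda_+]$ under $\mu \mapsto -\mu$, with the order reversed.

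To finish by showing $I_\geq(A,B) = [\lambda_-, \lambda_+]$, the key structural fact is \emph{convexity}: if $\rho_1, \rho_2 \in I_\geq(A,B)$ and $t \in [0,1]$, then $A + (t\rho_1 + (1-t)\rho_2)B = t(A + \rho_1 B) + (1-t)(A + \rho_2 B)$ is a convex combination of positive semidefinite operators, hence positive semidefinite. Thus $I_\geq(A,B)$ is a convex subset of $\RR$, i.e. an interval. Closedness follows because the set is defined by the family of inequalities $\PI{(A + \rho B)x}{x} \geq 0$ indexed by $x \in \HH$: for each fixed $x$ the map $\rho \mapsto \PI{Ax}{x} + \rho\PI{Bx}{x}$ is affine (hence continuous) in $\rho$, so each sublevel condition $\{\rho : \PI{Ax}{x} + \rho\PI{Bx}{x} \geq 0\}$ is closed, and $I_\geq(A,B)$ is the intersection of these closed sets. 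Combining convexity, closedness, and the nonemptiness and boundedness already supplied by Theorem \ref{Azizov para pencils} (which guarantees $\mu_\pm$ are finite, hence $\lambda_\pm$ are finite), we conclude $I_\geq(A,B) = [\lambda_-, \lambda_+]$ with $\lambda_- \leq \lambda_+$.

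The main obstacle is not any single step in isolation but rather the bookkeeping linking the extremal values $\mu_\pm$ of Theorem \ref{Azizov para pencils} to the actual endpoints $\lambda_\pm$ of the interval. One must verify that $[\lambda_-, \lambda_+]$ is exactly the positivity set and not merely contained in it: the containment $[\lambda_-, \lambda_+] \subseteq I_\geq(A,B)$ comes from the last assertion of Theorem \ref{Azizov para pencils}, while the reverse containment $I_\geq(A,B) \subseteq [\lambda_-, \lambda_+]$ requires showing that no $\rho$ outside $[\lambda_-, \lambda_+]$ can keep $A + \rho B$ positive semidefinite. This last point is where the indefiniteness of $B$ is essential, since it guarantees the existence of vectors on which $\PI{Bx}{x}$ takes both signs, and these vectors force the quotient bounds defining $\lambda_\pm$ to be sharp; I would extract this sharpness from the definitions of $\mu_\pm$ as an infimum and supremum, respectively, over the normalized level sets of the quadratic form of $B$.
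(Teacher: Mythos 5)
Your proposal is correct and takes essentially the same route as the paper: the forward implication is the same one-line computation, and the converse together with the identification $I_\geq(A,B)=[\lambda_-,\lambda_+]$, $\lambda_-=-\mu_+$, $\lambda_+=-\mu_-$, comes from invoking Theorem \ref{Azizov para pencils}. Your convexity/closedness remarks and the explicit verification of the containment $I_\geq(A,B)\subseteq[\lambda_-,\lambda_+]$ via the extremal (inf/sup) definitions of $\mu_\pm$ merely spell out what the paper compresses into the phrase ``as a consequence of \eqref{mus}''.
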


\begin{proof}
Assume that $I_\geq (A,B)\neq\varnothing$ and consider $\rho\in I_\geq (A,B)$. Given $x\in\HH$ such that $\PI{Bx}{x}=0$, we have
\[
0\leq \PI{(A + \rho B)x}{x}=\PI{Ax}{x} + \rho\PI{Bx}{x}=\PI{Ax}{x}.
\]
Therefore, \eqref{sufi pencil noneg} holds. The converse follows from Theorem \ref{Azizov para pencils}, as well as the description of $I_\geq (A,B)$ as an interval. In fact, as a consequence of \eqref{mus}, the constants $\lambda_-$ and $\lambda_+$ are given by
\[
\lambda_-:=-\mu_+ \quad \text{and}\quad\lambda_+:=-\mu_-. \qedhere
\]
\end{proof}

The first assertion in \cite[Thm.\,3]{HLS14} holds true for (bounded) selfadjoint operators acting on a Hilbert space $(\HH,\PI{\cdot}{\cdot})$, i.e. if $I_\geq(A,B)=\{\lambda_0\}$ then $B$ is indefinite and $I_>(A,B)=\varnothing$. The proof is exactly the same presented there. 
The last part of \cite[Thm.3]{HLS14} can be generalized as follows. 

\begin{prop}\label{nucleo del interior}
Let $A,B\in \mc{L}(\HH)$ be selfadjoint operators and assume that $I_\geq(A,B)=[\lambda_-,\lambda_+]$ with $\lambda_-<\lambda_+$. Then,
\begin{enumerate}[label=\roman*)]
	\item  $N(A + \la B)=Q(A)\cap Q(B)=N(A)\cap N(B)$ for every $\la\in (\lambda_-,\lambda_+)$.
	\item Assume $I_>(A,B)\neq\varnothing$. Given $\la\in I_>(A,B)$, there exist $S\in GL(\HH)^+$, a compact subset $K$ of $\RR$ and a spectral measure $\mu$ defined on $K$ such that
	\begin{equation}\label{desc espect}
	SBS=\int_K t\ d\mu(t) \qquad \text{and} \qquad SAS=\int_K (1-\la t)\ d\mu(t).
	\end{equation}
\end{enumerate}
\end{prop}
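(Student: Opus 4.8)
The plan is to treat the two parts separately; part (i) reduces to an elementary argument about an affine function together with the positivity of the whole pencil on the interval, whereas part (ii) is a direct application of the spectral theorem after a suitable change of variable. For part (i), I would prove the cyclic chain of inclusions
\[
N(A + \la B) \subseteq Q(A)\cap Q(B) \subseteq N(A)\cap N(B) \subseteq N(A + \la B),
\]
which forces all three sets to coincide. The rightmost inclusion is immediate, since $Ax=Bx=0$ gives $(A+\la B)x=0$. For the leftmost, I fix $x\in N(A+\la B)$ and consider the affine function $f(t)=\PI{(A+tB)x}{x}=\PI{Ax}{x}+t\PI{Bx}{x}$. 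Because $\la$ lies in the open interval $(\la_-,\la_+)$, I can choose $\epsilon>0$ with $\la\pm\epsilon\in[\la_-,\la_+]$, so that $f(\la\pm\epsilon)\geq 0$ by positivity of $A+(\la\pm\epsilon)B$, while $f(\la)=0$ since $(A+\la B)x=0$. Evaluating $f(\la\pm\epsilon)=\pm\,\epsilon\PI{Bx}{x}$ yields $\PI{Bx}{x}=0$ and hence $\PI{Ax}{x}=0$, i.e. $x\in Q(A)\cap Q(B)$. For the middle inclusion I take $x\in Q(A)\cap Q(B)$; then $\PI{(A+\rho B)x}{x}=0$ for every $\rho$, and since $A+\rho B\in\mc{L}(\HH)^+$ for $\rho\in\{\la_-,\la_+\}$, the standard fact that a positive semidefinite operator $T$ satisfies $N(T)=\{x:\PI{Tx}{x}=0\}$ (via $\PI{Tx}{x}=\|T^{1/2}x\|^2$) gives $(A+\la_- B)x=(A+\la_+ B)x=0$. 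Subtracting and using $\la_-\neq\la_+$ forces $Bx=0$, whence $Ax=0$, so $x\in N(A)\cap N(B)$.

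For part (ii), the key observation is that for $\la\in I_>(A,B)$ the operator $A+\la B$ is positive definite and therefore invertible, so I would set $S:=(A+\la B)^{-1/2}\in GL(\HH)^+$. Then $S(A+\la B)S=I$, that is, $SAS=I-\la\,SBS$. Writing $C:=SBS$, a bounded selfadjoint operator, the spectral theorem supplies a compact set $K:=\sigma(C)\subset\RR$ and a spectral measure $\mu$ on $K$ with $C=\int_K t\,d\mu(t)$. Since $\mu(K)=I$, I conclude
\[
SBS=\int_K t\,d\mu(t)\qquad\text{and}\qquad SAS=I-\la C=\int_K(1-\la t)\,d\mu(t),
\]
which is exactly the claimed decomposition.

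The only genuinely delicate point is in part (i): one must exploit the positivity of the pencil on a whole neighbourhood of $\la$ (equivalently, at two distinct parameters), rather than merely at $\la$ itself, in order to pass from the vanishing of the quadratic forms to actual membership in the nullspaces $N(A)$ and $N(B)$. I expect no serious obstacle in part (ii), which amounts to the change of variable $S=(A+\la B)^{-1/2}$ followed by the functional calculus.
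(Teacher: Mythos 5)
Your proof is correct. Part (ii) coincides with the paper's own argument verbatim: set $S=(A+\la B)^{-1/2}\in GL(\HH)^+$, rewrite $SAS=I-\la\, SBS$, and apply the spectral theorem to $SBS$ on $K=\sigma(SBS)$. In part (i), however, you take a genuinely different and more elementary route. The paper's proof leans on the variational identities for the endpoints, $-\la_+=\sup_{\{x:\PI{Bx}{x}<0\}}\PI{Ax}{x}/\PI{Bx}{x}$ and $-\la_-=\inf_{\{x:\PI{Bx}{x}>0\}}\PI{Ax}{x}/\PI{Bx}{x}$ (inherited from Theorem \ref{Azizov para pencils} through Proposition \ref{Azizov pencils}): for $x_0\in N(A+\la B)\setminus Q(B)$ the ratio $\PI{Ax_0}{x_0}/\PI{Bx_0}{x_0}=-\la$ would lie strictly between $-\la_+$ and $-\la_-$, contradicting whichever of the two bounds applies; the equality with $N(A)\cap N(B)$ is then extracted from the $\la$-independence of $N(A+\la B)$ by comparing two interior parameters. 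You avoid the variational description of $\la_\pm$ altogether: positivity of $A+(\la\pm\epsilon)B$ together with the affine function $t\mapsto\PI{(A+tB)x}{x}$ gives $N(A+\la B)\subseteq Q(A)\cap Q(B)$, and positivity at the two endpoints combined with the standard fact that $\PI{Tx}{x}=0$ implies $Tx=0$ for $T\in\mc{L}(\HH)^+$ gives $Q(A)\cap Q(B)\subseteq N(A)\cap N(B)$. What your argument buys is self-containedness and slightly greater generality: it uses only that $A+\rho B\in\mc{L}(\HH)^+$ for all $\rho\in[\la_-,\la_+]$, with no appeal to the indefiniteness of $B$ or to the sup/inf formulas. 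What the paper's argument buys is coherence with the variational framework of the section, whose quantities $\la_\pm$ and the sets $\mc{M}_\pm$ reappear later (e.g. in Lemma \ref{lema:nucleo_lambda+}), so the same mechanism is recycled there.
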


\begin{proof}
Assume that $\la_-\neq\la_+$ and consider $\la\in (\lambda_-,\lambda_+)$. Since $A+\la B\in \mc{L}(\HH)^+$, it is easy to see that 
\[
Q(A)\cap Q(B)= N(A+\la B)\cap Q(B).
\]
Now, assume that there exists $x_0\in N(A+\la B)\setminus(Q(A)\cap Q(B))=
N(A+\la B)\setminus Q(B)$. Since $\PI{Ax_0}{x_0}+\la\PI{Bx_0}{x_0}=0$ and $\la_-<\la<\la_+$,
\begin{equation}\label{algo_1}
\sup_{\{x\in\HH:\PI{Bx}{x}<0\}}\frac{\PI{Ax}{x}}{\PI{Bx}{x}}=-\la_+<\frac{\PI{Ax_0}{x_0}}{\PI{Bx_0}{x_0}}<-\la_-=\inf_{\{x\in\HH:\PI{Bx}{x}>0\}}\frac{\PI{Ax}{x}}{\PI{Bx}{x}}.
\end{equation}
Since $\PI{Bx_0}{x_0}>0$ or $\PI{Bx_0}{x_0}<0$, \eqref{algo_1} leads to a contradiction. Therefore, 
\[
N(A+\la B)=Q(A)\cap Q(B).
\]
In particular, $N(A+\la B)$ does not depend on $\la$. Hence,  if $x\in Q(A)\cap Q(B)$ then $Ax=-\la' Bx$ for any $\la'\in(\la_-,\la_+)$.
This implies that if $x\in N(A+\la B)$ then $x\in N(A)\cap N(B)$, completing the proof of item {\it i}.

 To prove item {\it ii}, consider a fixed $\la\in I_>(A,B)$ and denote $S:=(A+\la B)^{-1/2}\in GL(\HH)^+$. Then, 
\[
SAS =I- \la SBS.
\]
Hence, if $K=\sigma(SBS)$ and $\mu$ is the spectral measure of $SBS$, then \eqref{desc espect} holds. 
\end{proof}

Item {\it i} can also be proven following the same lines as in the proof of  \cite[Thm.\,3]{HLS14}. Item {\it ii} in the above proposition  can be read as a ``simultaneous diagonalization via congruence'' for selfadjoint operators acting on a Hilbert space.

\medskip

Our aim is to show that if $I_\geq(A,B)=[\la_-,\la_+]$ and $\la_-<\la_+$, then $I_>(A,B)=(\la_-,\la_+)$. First, we need to prove some technical preliminaries. In the first place, for $\lambda\in[\lambda_-,\lambda_+]$ consider the seminorm $\|\cdot\|_\lambda$ defined by
\[
\|x\|_\lambda=\PI{(A+\lambda B)x}{x}^{1/2},\quad\quad x\in\HH.
\]
By Proposition \ref{nucleo del interior}, if $\la\in(\la_-,\la_+)$ then $\|\cdot\|_{\la}$ is a norm in the Hilbert space $\HH':=\big(N(A)\cap N(B)\big)^\bot$.

\begin{lema}\label{lema:normas_equivalentes_p} 
If $\la_-\neq\la_+$ then $\|\cdot\|_{\la}$ and $\|\cdot\|_{\la'}$ are equivalent on $\HH'$
for every pair $\la,\la'\in(\la_-,\la_+)$.
\end{lema}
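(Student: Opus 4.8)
The plan is to exploit the fact that, for each fixed $x\in\HH$, the map $\mu\mapsto\|x\|_\mu^2=\PI{Ax}{x}+\mu\PI{Bx}{x}$ is an \emph{affine} function of $\mu$. Since $\la_\pm\in I_\geq(A,B)$, the operators $A+\la_-B$ and $A+\la_+B$ belong to $\mc{L}(\HH)^+$, so the quantities $\|x\|_{\la_-}^2$ and $\|x\|_{\la_+}^2$ are both nonnegative. Given $\la\in(\la_-,\la_+)$, I would write it as the convex combination $\la=(1-s)\la_-+s\la_+$ with $s=\frac{\la-\la_-}{\la_+-\la_-}\in(0,1)$; by affinity this yields
\[
\|x\|_\la^2=(1-s)\,\|x\|_{\la_-}^2+s\,\|x\|_{\la_+}^2,
\]
and likewise $\|x\|_{\la'}^2=(1-s')\,\|x\|_{\la_-}^2+s'\,\|x\|_{\la_+}^2$ with $s'\in(0,1)$. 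The key point is that all four coefficients $1-s,s,1-s',s'$ are strictly positive precisely because $\la,\la'$ lie in the \emph{open} interval.

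Setting $p:=\|x\|_{\la_-}^2\geq 0$ and $q:=\|x\|_{\la_+}^2\geq 0$, the problem reduces to comparing the two nonnegative linear forms $(1-s')p+s'q$ and $(1-s)p+sq$. Writing $C_1:=\frac{1-s'}{1-s}$ and $C_2:=\frac{s'}{s}$, I would invoke the elementary mediant bound: since $(1-s')p+s'q=C_1(1-s)p+C_2\,sq$, one has
\[
\min(C_1,C_2)\big[(1-s)p+sq\big]\leq (1-s')p+s'q\leq \max(C_1,C_2)\big[(1-s)p+sq\big].
\]
Both $C_1$ and $C_2$ are strictly positive constants independent of $x$, so this reads $\min(C_1,C_2)\,\|x\|_\la^2\leq\|x\|_{\la'}^2\leq\max(C_1,C_2)\,\|x\|_\la^2$ for every $x\in\HH$. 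Taking square roots gives the desired equivalence on all of $\HH$, and a fortiori on $\HH'$.

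One small point worth checking is that these really are norms, and not just seminorms, on $\HH'$: this is exactly the content of Proposition \ref{nucleo del interior}, which guarantees $N(A+\la B)=N(A)\cap N(B)$ for every $\la\in(\la_-,\la_+)$, so that $\|x\|_\la=0$ with $x\in\HH'=\big(N(A)\cap N(B)\big)^\bot$ forces $x=0$. I do not expect a genuine obstacle here; the only thing to be careful about is that the convex-combination coefficients remain strictly positive, which is why the statement is restricted to the open interval $(\la_-,\la_+)$. If one instead allowed an endpoint, say $\la=\la_-$, then $s=0$ and the form degenerates to $\|x\|_{\la_-}^2=p$, which may vanish on nonzero vectors of $\HH'$, breaking both the norm property and the lower bound.
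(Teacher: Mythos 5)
Your proof is correct, but it follows a genuinely different route from the paper's. The paper fixes $\la<\la'$ in $(\la_-,\la_+)$, divides to form the ratio $\|x\|_{\la'}^2/\|x\|_{\la}^2=1+(\la'-\la)\tfrac{\PI{Bx}{x}}{\PI{Ax}{x}+\la\PI{Bx}{x}}$, and then bounds the fraction $\tfrac{\PI{Bx}{x}}{\PI{Ax}{x}+\la\PI{Bx}{x}}$ between $-\tfrac{1}{\la_+-\la}$ and $\tfrac{1}{\la-\la_-}$ by a case analysis on the sign of $\PI{Bx}{x}$, using the variational formulas $\la_-=-\inf_{\PI{Bx}{x}>0}\tfrac{\PI{Ax}{x}}{\PI{Bx}{x}}$ and $\la_+=-\sup_{\PI{Bx}{x}<0}\tfrac{\PI{Ax}{x}}{\PI{Bx}{x}}$. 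You instead use only the two facts $A+\la_\pm B\in\mc{L}(\HH)^+$ together with the affinity of $\mu\mapsto\|x\|_\mu^2$, writing each interior point as a convex combination of the endpoints and comparing the two weighted averages of $p=\|x\|_{\la_-}^2\geq0$, $q=\|x\|_{\la_+}^2\geq0$; this avoids both the division (so your two-sided seminorm inequality holds on all of $\HH$, not just on $\HH'\setminus\set{0}$) and the sign case analysis. The two arguments in fact produce the same constants: when $\la<\la'$ one has $C_1=\tfrac{\la_+-\la'}{\la_+-\la}<1<\tfrac{\la'-\la_-}{\la-\la_-}=C_2$, so your $\min$ and $\max$ are exactly the paper's bounds in \eqref{eq_2_p}. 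A further bonus of your method is that letting $\la'$ tend to (or equal) an endpoint, i.e. $s'\in\set{0,1}$, the upper estimate survives and yields precisely the inequalities \eqref{eq:normas_extremos} of Remark \ref{cuasi equiv} with no extra work, whereas the lower constant degenerates to $0$ there, consistent with the failure of equivalence at the endpoints that you point out. Your appeal to Proposition \ref{nucleo del interior} for definiteness of $\|\cdot\|_\la$ on $\HH'$ is also the step the paper uses, so nothing is missing.
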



\begin{proof}
Given $\la,\la'\in(\la_-,\la_+)$ assume that $\la'>\la$. For an arbitrary $x\in\HH'$, $x\neq 0$, 
\begin{align*}
\|x\|_{\la'}^2&=\PI{Ax}{x}+\la'\PI{Bx}{x}=\parentesis{\PI{Ax}{x}+\la \PI{Bx}{x}}+(\la'-\la)\PI{Bx}{x}\\
&=\|x\|_{\la}^2+(\la'-\la)\PI{Bx}{x},
\end{align*}
and consequently,
\begin{equation}\label{eq_1_p}
\frac{\|x\|_{\la'}^2}{\|x\|_{\la}^2}=1+(\la'-\la)\frac{\PI{Bx}{x}}{\|x\|^2_{\la}}=1+(\la'-\la)\frac{\PI{Bx}{x}}{\PI{Ax}{x}+\la\PI{Bx}{x}}.
\end{equation}
Now we show that
\begin{equation}\label{eq:final_desigualdad_p}
-\frac{1}{\la_+-\la}\leq\frac{\PI{Bx}{x}}{\PI{Ax}{x}+\la\PI{Bx}{x}}\leq\frac{1}{\la-\la_-}.
\end{equation}
If $\PI{Bx}{x}=0$, \eqref{eq:final_desigualdad_p} is trivially satisfied. If $\PI{Bx}{x}>0$, then
\begin{align*}
\frac{\PI{Bx}{x}}{\PI{Ax}{x}+\la\PI{Bx}{x}}&\leq\sup_{\{y\in\HH:\ \PI{By}{y}>0\}}\frac{\PI{By}{y}}{\PI{Ay}{y}+\la\PI{By}{y}} \\ &=\frac{1}{\displaystyle{\inf_{\{y\in\HH:\ \PI{By}{y}>0\}}\frac{\PI{Ay}{y}}{\PI{By}{y}}}+\la}=\frac{1}{\la-\la_-}.
\end{align*}
Furthermore, since $\PI{Ax}{x}+\la\PI{Bx}{x}>0$ and $\PI{Bx}{x}>0$, 
\[
-\frac{1}{\la_+-\la}< 0< \frac{\PI{Bx}{x}}{\PI{Ax}{x}+\la\PI{Bx}{x}}.
\]
A similar argument can be given if $\PI{Bx}{x}<0$.

As a consequence of \eqref{eq:final_desigualdad_p} we have that
\begin{equation}\label{eq_2_p}
\frac{\la_+-\la'}{\la_+-\la}\leq 1+(\la'-\la)\frac{\PI{Bx}{x}}{\PI{Ax}{x}+\la\PI{Bx}{x}}\leq \frac{\la'-\la_-}{\la-\la_-}.
\end{equation}
By \eqref{eq_1_p} and \eqref{eq_2_p},
\[
\parentesis{\frac{\la_+-\la'}{\la_+-\la}}^{1/2}\leq\frac{\|x\|_{\la'}}{\|x\|_{\la}}\leq\parentesis{\frac{\la'-\la_-}{\la-\la_-}}^{1/2},
\]
and consequently, the norms $\|\cdot\|_{\la'}$ and $\|\cdot\|_\la$ are equivalent.
\end{proof}

\begin{obs}\label{cuasi equiv} 
With the same procedure used in the proof of Lemma \ref{lema:normas_equivalentes_p}, it can be shown that for every $\la\in(\la_-,\la_+)$ and every
$x\in\HH'$
\begin{equation}\label{eq:normas_extremos}
\|x\|_{\la_-}\leq\parentesis{\frac{\la_+-\la_-}{\la_+-\la}}^{1/2}\|x\|_\la\quad\text{and}\quad\|x\|_{\la_+}\leq\parentesis{\frac{\la_+-\la_-}{\la-\la_-}}^{1/2}\|x\|_\la.
\end{equation}
\end{obs}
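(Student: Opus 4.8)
The plan is to reuse the algebraic identity \eqref{eq_1_p} from the proof of Lemma \ref{lema:normas_equivalentes_p}, but now evaluating the quotient of seminorms at the endpoints $\la_\pm$ instead of at an interior point $\la'$. The identity
\[
\frac{\|x\|_{\la'}^2}{\|x\|_{\la}^2}=1+(\la'-\la)\frac{\PI{Bx}{x}}{\PI{Ax}{x}+\la\PI{Bx}{x}}
\]
is a purely formal computation valid for any real $\la'$, provided $\la\in(\la_-,\la_+)$ so that the denominator $\|x\|_\la^2$ is positive for $x\in\HH'\setminus\{0\}$; in particular it remains valid for $\la'=\la_+$ and $\la'=\la_-$. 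Note that $A+\la_\pm B\in\mc{L}(\HH)^+$, since $\la_\pm\in I_\geq(A,B)=[\la_-,\la_+]$, so the seminorms $\|\cdot\|_{\la_\pm}$ are well defined even though they need not be norms.

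First I would establish the second inequality by setting $\la'=\la_+$. Since $\la<\la_+$, the factor $\la_+-\la$ is positive, so I may combine it with the upper bound in \eqref{eq:final_desigualdad_p} to obtain
\[
\frac{\|x\|_{\la_+}^2}{\|x\|_{\la}^2}\leq 1+(\la_+-\la)\frac{1}{\la-\la_-}=\frac{\la_+-\la_-}{\la-\la_-},
\]
and taking square roots yields $\|x\|_{\la_+}\leq\parentesis{\frac{\la_+-\la_-}{\la-\la_-}}^{1/2}\|x\|_\la$.

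For the first inequality I would set $\la'=\la_-$. Now $\la_--\la<0$, so multiplying the lower bound in \eqref{eq:final_desigualdad_p} by this factor reverses the inequality and gives
\[
(\la_--\la)\frac{\PI{Bx}{x}}{\PI{Ax}{x}+\la\PI{Bx}{x}}\leq(\la_--\la)\parentesis{-\frac{1}{\la_+-\la}}=\frac{\la-\la_-}{\la_+-\la}.
\]
Substituting into \eqref{eq_1_p} produces $\|x\|_{\la_-}^2/\|x\|_\la^2\leq\frac{\la_+-\la_-}{\la_+-\la}$, and extracting square roots delivers the claimed bound.

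The argument contains no serious obstacle: the only point deserving attention is that \eqref{eq:final_desigualdad_p} bounds the quotient $\PI{Bx}{x}/(\PI{Ax}{x}+\la\PI{Bx}{x})$ using solely the interior parameter $\la$, so it remains available verbatim when the outer parameter is pushed to either endpoint. The degenerate case $\PI{Bx}{x}=0$ is trivial, since then $\|x\|_{\la_\pm}=\|x\|_\la$ while both prefactors exceed $1$.
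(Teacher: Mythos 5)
Your proposal is correct and is exactly the argument the paper intends: the remark's ``same procedure'' means substituting $\la'=\la_\pm$ into the identity \eqref{eq_1_p} and combining it with the bound \eqref{eq:final_desigualdad_p}, which depends only on the interior parameter $\la$, just as you do (including the sign reversal when $\la'-\la<0$). No gaps; the endpoint seminorms are indeed well defined since $\la_\pm\in I_\geq(A,B)$.
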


\begin{cor}\label{rango raiz}
Let $A,B\in \mc{L}(\HH)$ be selfadjoint operators and suppose  that $B$ is indefinite. Assume $I_\geq(A,B)=[\lambda_-,\lambda_+]$ with $\lambda_-<\lambda_+$. Then,
   \begin{enumerate}[label=\roman*)]
\item $R\big((A + \la B)^{1/2}\big)=R\big((A + \la' B)^{1/2}\big)$ for every $\la,\la'\in (\la_-,\la_+)$; 
\item $R\big((A + \la_\pm B)^{1/2}\big)\subseteq R\big((A + \la B)^{1/2}\big)$ for every $\la\in (\la_-,\la_+)$. 
\end{enumerate}
\end{cor}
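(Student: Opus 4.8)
The plan is to convert each range inclusion into an operator inequality of the form $A+\la'B\leq c\,(A+\la B)$ and then appeal to Douglas' theorem. Recall that for $C,D\in\mc{L}(\HH)^+$, applying Theorem \ref{Doug} to the selfadjoint operators $C^{1/2}$ and $D^{1/2}$ (so that $C^{1/2}(C^{1/2})^*=C$ and $D^{1/2}(D^{1/2})^*=D$) shows that $R(C^{1/2})\subseteq R(D^{1/2})$ if and only if there exists $c>0$ with $C\leq c\,D$. Since $\|x\|_\mu^2=\PI{(A+\mu B)x}{x}$, it therefore suffices to compare the quadratic forms $\|\cdot\|_\mu^2$ as operators on all of $\HH$, and the bounds already obtained in Lemma \ref{lema:normas_equivalentes_p} and Remark \ref{cuasi equiv} provide exactly these comparisons.

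The first step is to observe that the seminorm $\|\cdot\|_\mu$ only sees the $\HH'$-component of a vector. By Proposition \ref{nucleo del interior} we have $N(A)\cap N(B)\subseteq N(A+\mu B)$ for every $\mu\in[\la_-,\la_+]$: in the interior this is the equality $N(A+\mu B)=N(A)\cap N(B)$, while at the endpoints it is the trivial inclusion $N(A)\cap N(B)\subseteq N(A+\la_\pm B)$. Hence, writing $x=x'+x_0$ with $x'\in\HH'$ and $x_0\in N(A)\cap N(B)$, selfadjointness of $A+\mu B$ together with $(A+\mu B)x_0=0$ gives $\PI{(A+\mu B)x}{x}=\PI{(A+\mu B)x'}{x'}$, so that $\|x\|_\mu=\|x'\|_\mu$ for every $x\in\HH$. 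This is the mechanism that lets me lift statements valid on $\HH'$ to statements valid on all of $\HH$.

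For item \emph{i}, fix $\la,\la'\in(\la_-,\la_+)$. By Lemma \ref{lema:normas_equivalentes_p} the norms $\|\cdot\|_\la$ and $\|\cdot\|_{\la'}$ are equivalent on $\HH'$, and by the previous paragraph this equivalence lifts to $\HH$, yielding constants $0<c_1\leq c_2$ with $c_1\|x\|_\la\leq\|x\|_{\la'}\leq c_2\|x\|_\la$ for all $x\in\HH$. Squaring and reading these inequalities at the level of quadratic forms gives $c_1^2(A+\la B)\leq A+\la'B\leq c_2^2(A+\la B)$. The Douglas criterion recalled above then produces both inclusions $R((A+\la'B)^{1/2})\subseteq R((A+\la B)^{1/2})$ and $R((A+\la B)^{1/2})\subseteq R((A+\la'B)^{1/2})$, whose conjunction is the desired equality. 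For item \emph{ii}, fix $\la\in(\la_-,\la_+)$; here Remark \ref{cuasi equiv} provides the one-sided bounds $\|x\|_{\la_\pm}\leq c\,\|x\|_\la$ on $\HH'$, which lift to $\HH$ by the same argument (crucially, using only the inclusion $N(A)\cap N(B)\subseteq N(A+\la_\pm B)$). This reads $A+\la_\pm B\leq c^2(A+\la B)$, and Douglas' theorem gives $R((A+\la_\pm B)^{1/2})\subseteq R((A+\la B)^{1/2})$.

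The step I expect to require the most care is the lifting from $\HH'$ to $\HH$ at the endpoints $\la_\pm$, where $N(A+\la_\pm B)$ may strictly contain $N(A)\cap N(B)$: one must verify that the bounds of Remark \ref{cuasi equiv} still hold on the full space even though the reference norm $\|\cdot\|_\la$ is only a norm on $\HH'$. This is resolved precisely because both seminorms annihilate the common kernel $N(A)\cap N(B)$, so only the one-sided containment of kernels is needed and the endpoint comparison goes through verbatim.
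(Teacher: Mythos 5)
Your proof is correct and follows essentially the same route as the paper: rephrase the norm bounds of Lemma \ref{lema:normas_equivalentes_p} and Remark \ref{cuasi equiv} as L\"owner-order inequalities between the operators $A+\mu B$, and then apply Theorem \ref{Doug} to their square roots to obtain the range relations. The only difference is that you spell out explicitly the lifting of the seminorm comparisons from $\HH'$ to all of $\HH$ via the common kernel $N(A)\cap N(B)$ (including at the endpoints $\la_\pm$), a step the paper leaves implicit.
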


\begin{proof}
By Proposition \ref{nucleo del interior}, we have that $N(A+ \la B)^\bot=\HH'$ for every $\la\in (\la_-,\la_+)$. Then, given $\la,\la'\in (\la_-,\la_+)$,
$\la'\neq\la$, the norm equivalence in Lemma \ref{lema:normas_equivalentes_p} can be rephrased as: there exist $0<\alpha<\beta$ such that 
\[
\alpha(A+\lambda' B)\leq A+\lambda B\leq\beta(A+\lambda' B).
\]
Applying Theorem \ref{Doug}, 
\[
R\big((A + \la B)^{1/2}\big)=R\big((A + \la' B)^{1/2}\big).
\]

Analogously, the inequalities in \eqref{eq:normas_extremos} imply there exist $\alpha_\pm>0$ such that
\[
A+\lambda_\pm B\leq \alpha_\pm(A+\lambda B).
\]
Hence,
\[
R\big((A + \la_\pm B)^{1/2}\big)\subseteq R\big((A + \la B)^{1/2}\big). \qedhere
\]
\end{proof}


\begin{cor}\label{rangos cerrados} 
Let $A,B\in \mc{L}(\HH)$ be selfadjoint operators and suppose  that $B$ is indefinite. Assume $I_\geq(A,B)=[\lambda_-,\lambda_+]$ with $\lambda_-<\lambda_+$. If $A+\lambda_0 B$ has closed range for some $\lambda_0\in(\lambda_-,\lambda_+)$, then $A+ \lambda B$ has closed range for every $\lambda \in(\lambda_-,\lambda_+)$.
\end{cor}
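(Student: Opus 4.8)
The plan is to reduce the closed-range question for the pencil at a parameter $\lambda$ to the same question for its positive square root, where Corollary \ref{rango raiz} has already done the essential work. The observation I would isolate first, as a preliminary, is the equivalence: for a positive semidefinite operator $T\in\mathcal{L}(\HH)^+$, the range $R(T)$ is closed if and only if $R(T^{1/2})$ is closed. Granting this, the corollary becomes immediate. If $A+\lambda_0 B$ has closed range for some $\lambda_0\in(\lambda_-,\lambda_+)$, then $R\big((A+\lambda_0 B)^{1/2}\big)$ is closed; by Corollary \ref{rango raiz}~i) we have $R\big((A+\lambda B)^{1/2}\big)=R\big((A+\lambda_0 B)^{1/2}\big)$ for every $\lambda\in(\lambda_-,\lambda_+)$, so $R\big((A+\lambda B)^{1/2}\big)$ is closed as well; applying the equivalence once more yields that $R(A+\lambda B)$ is closed.

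Thus the whole argument hinges on this preliminary equivalence, and that is where the actual content lies. The forward implication is free: the remark following Theorem \ref{Doug} gives $R(T)\subseteq R(T^{1/2})\subseteq\overline{R(T)}$, so $R(T)$ closed forces $R(T)=R(T^{1/2})$ and in particular $R(T^{1/2})$ closed. For the reverse implication I would factor $T=T^{1/2}\cdot T^{1/2}$ and invoke Proposition \ref{R(AB)}: since each factor has closed range (by hypothesis $R(T^{1/2})$ is closed), $T$ has closed range precisely when $R(T^{1/2})+N(T^{1/2})$ is closed in $\HH$. But $T^{1/2}$ is selfadjoint, so $\overline{R(T^{1/2})}=N(T^{1/2})^\bot$; with $R(T^{1/2})$ closed this reads $R(T^{1/2})=N(T^{1/2})^\bot$, whence $R(T^{1/2})+N(T^{1/2})=\HH$ is trivially closed, and Proposition \ref{R(AB)} delivers the closedness of $R(T)$.

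The main obstacle is exactly this reverse implication: the excerpt records only the easy direction ``$R(T)$ is closed $\iff R(T)=R(T^{1/2})$'', which on its own does not let one pass from closedness of the square root's range back to that of $T$. Supplying the Douglas-type product theorem (Proposition \ref{R(AB)}) is what closes the gap, and I would be careful to note that its applicability requires \emph{both} factors to have closed range — available only once we know $R(T^{1/2})$ is closed. Everything else is bookkeeping: with the equivalence in hand, Corollary \ref{rango raiz}~i) transports closedness of the square-root range, and hence of the pencil itself, uniformly across the whole open interval $(\lambda_-,\lambda_+)$ without any further estimates.
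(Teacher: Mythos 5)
Your proof is correct and takes essentially the approach the paper intends: the corollary is stated there without proof, precisely as an immediate consequence of Corollary~\ref{rango raiz}~i) together with the equivalence, for $T\in\mc{L}(\HH)^+$, between closedness of $R(T)$ and closedness of $R(T^{1/2})$. Your completion of the reverse implication (closedness of $R(T^{1/2})$ implies closedness of $R(T)$) via Proposition~\ref{R(AB)} correctly supplies the one detail that the remark after Theorem~\ref{Doug} leaves implicit.
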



The main result of this section shows that, if $I_> (A,B)$ is not empty then $I_> (A,B)$ coincides with the interior of $I_\geq (A,B)$. 

\begin{thm}\label{pencil positivo}
Let $A,B\in \mc{L}(\HH)$ be selfadjoint operators and suppose that $B$ is indefinite.
Assume $I_\geq(A,B)=[\lambda_-,\lambda_+]$ with $\lambda_-\leq\lambda_+$.
 Then, $I_> (A,B)\neq\varnothing$ if and only if there exists $\alpha>0$ such that  
\begin{equation}\label{sufi pencil posi}
\PI{Ax}{x}\geq \alpha\|x\|^2 \qquad \text{for every $x\in\HH$ such that $\PI{Bx}{x}=0$}. 
\end{equation}
In this case, $\la_-<\la_+$ and $I_> (A,B)=(\lambda_-,\lambda_+)$.
\end{thm}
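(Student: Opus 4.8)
The plan is to prove the two implications of the equivalence separately and then identify $I_>(A,B)$ with the open interval. The forward implication is immediate: if $\rho\in I_>(A,B)$ then $A+\rho B\geq \alpha I$ for some $\alpha>0$, and restricting to $x\in Q(B)$, where $\PI{Bx}{x}=0$, gives $\PI{Ax}{x}=\PI{(A+\rho B)x}{x}\geq \alpha\|x\|^2$, which is exactly \eqref{sufi pencil posi}.

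For the converse, the key idea is a shift trick that reduces the uniform positivity \eqref{sufi pencil posi} to the plain nonnegativity hypothesis \eqref{sufi pencil noneg} already handled by Proposition \ref{Azizov pencils}. Assuming \eqref{sufi pencil posi} with constant $\alpha>0$, I would consider the selfadjoint operator $A-\alpha I$. For every $x\in Q(B)$ one has $\PI{(A-\alpha I)x}{x}=\PI{Ax}{x}-\alpha\|x\|^2\geq 0$, so the pair $(A-\alpha I,B)$ satisfies \eqref{sufi pencil noneg}. Since $B$ is indefinite, Proposition \ref{Azizov pencils} yields $I_\geq(A-\alpha I,B)\neq\varnothing$, i.e.\ there is $\lambda_0\in\RR$ with $(A-\alpha I)+\lambda_0 B\geq 0$, equivalently $A+\lambda_0 B\geq \alpha I$. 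Thus $\lambda_0\in I_>(A,B)$ and $I_>(A,B)\neq\varnothing$.

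Next I would locate this point strictly inside $[\lambda_-,\lambda_+]$. A short perturbation argument shows that the endpoints never belong to $I_>(A,B)$: if $A+\lambda_+ B\geq \beta I$ for some $\beta>0$, then $A+(\lambda_++\vareps)B\geq(\beta-\vareps\|B\|)I>0$ for small $\vareps>0$ (here $\|B\|>0$ because $B$ is indefinite), contradicting that $\lambda_+$ is the right endpoint of $I_\geq(A,B)$; the argument for $\lambda_-$ is symmetric. Hence $I_>(A,B)\subseteq(\lambda_-,\lambda_+)$, and in particular the point $\lambda_0$ found above lies in $(\lambda_-,\lambda_+)$, which forces $\lambda_-<\lambda_+$.

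Finally, for the reverse inclusion $(\lambda_-,\lambda_+)\subseteq I_>(A,B)$ I would take convex combinations, with respect to the L\"owner order, of the two established inequalities $A+\lambda_0 B\geq \alpha I$ and $A+\lambda_\pm B\geq 0$. Given $\la\in(\lambda_-,\lambda_+)$, write $\la=t\lambda_-+(1-t)\lambda_0$ when $\la\leq\lambda_0$, or $\la=t\lambda_++(1-t)\lambda_0$ when $\la\geq\lambda_0$, with $t\in[0,1)$; the corresponding convex combination of the operator inequalities gives $A+\la B\geq(1-t)\alpha I>0$, so $\la\in I_>(A,B)$. Together with the inclusion of the previous paragraph this yields $I_>(A,B)=(\lambda_-,\lambda_+)$. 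The crux, and the only genuinely non-routine step, is the shift trick in the second paragraph: recognizing that subtracting $\alpha I$ converts the uniform positivity of $A$ on $Q(B)$ into the plain nonnegativity covered by Proposition \ref{Azizov pencils}. Everything else reduces to elementary convexity and perturbation arguments for the L\"owner order.
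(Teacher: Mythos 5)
Your proof is correct, and three of its four steps coincide with the paper's: the trivial forward restriction to $Q(B)$, the shift trick $A\mapsto A-\alpha I$ combined with Proposition \ref{Azizov pencils} for the converse, and the perturbation argument $A+(\la_\pm\pm\vareps)B\geq(\beta-\vareps\|B\|)I$ excluding the endpoints. Where you genuinely diverge is in proving the inclusion $(\la_-,\la_+)\subseteq I_>(A,B)$. You use elementary convexity of the L\"owner order: writing $\la=t\la_\pm+(1-t)\la_0$ with $t\in[0,1)$ and combining $A+\la_\pm B\geq 0$ with $A+\la_0 B\geq\alpha I$ gives $A+\la B\geq(1-t)\alpha I>0$. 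The paper instead invokes its structural results: Proposition \ref{nucleo del interior} (the nullspace $N(A+\la B)$ is constant, hence trivial, on the open interval) together with Corollary \ref{rangos cerrados} (closedness of $R(A+\la B)$ propagates from one interior point to the whole of $(\la_-,\la_+)$, which in turn rests on the norm-equivalence Lemma \ref{lema:normas_equivalentes_p} and Douglas' theorem), and then uses that an injective, closed-range, positive semidefinite operator is positive definite. Your route is shorter and more self-contained, needing nothing beyond Proposition \ref{Azizov pencils} and the order structure; the paper's route is heavier here but reuses machinery it develops anyway for Section \ref{QPQC} (e.g.\ Corollary \ref{rangos cerrados} reappears in Proposition \ref{prop_todos_pseudo_regulares}). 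A further small merit of your argument: it makes the implication that $I_>(A,B)\neq\varnothing$ forces $\la_-<\la_+$ explicit via the endpoint exclusion, whereas the paper delegates that degenerate case to an earlier remark quoting \cite{HLS14}.
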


\begin{proof}
Assume that $I_> (A,B)\neq\varnothing$ and consider $\rho\in I_> (A,B)$. Then, $A+\rho B$ is injective and, by Proposition \ref{nucleo del interior}, $N(A+ \la B)=\{0\}$ for every $\la\in (\la_-,\la_+)$. Also,  $A+\rho B$ has closed range and, by Proposition \ref{rangos cerrados}, $R(A+ \la B)$ is closed for every $\la\in (\la_-,\la_+)$. Therefore, $(\la_-,\la_+)\subseteq I_>(A,B)$. 

Also, since $A+\rho B$ is invertible, there exists $\alpha>0$ such that $\PI{(A + \rho B)x}{x}\geq \alpha\|x\|^2$ for every $x\in\HH$. Hence, given $x\in\HH$ such that $\PI{Bx}{x}=0$ we have
\[
\alpha\|x\|^2\leq \PI{(A + \rho B)x}{x}=\PI{Ax}{x},
\]
which proves \eqref{sufi pencil posi}.

Now, let us show that $\la_\pm\notin I_>(A,B)$. If $\la_\pm\in I_>(A,B)$, then 
there exists $\alpha_\pm>0$ such that $\PI{(A + \la_\pm B)x}{x}\geq \alpha_\pm\|x\|^2$ for every $x\in\HH$. Since $|\PI{Bx}{x}|\leq \|B\|\ \|x\|^2$ for all $x\in\HH$, we have that
\begin{align*}
\PI{\left(A + \big(\la_\pm \pm\tfrac{\alpha_\pm}{\|B\|}\big)B\right)x}{x} &= \PI{(A+\la_\pm B)x}{x} \pm \frac{\alpha_\pm}{\|B\|}\PI{Bx}{x} \\ &\geq \PI{(A + \la_\pm B)x}{x}- \alpha_\pm\|x\|^2\geq 0,
\end{align*}
for every $x\in\HH$, i.e. $\la_\pm \pm\tfrac{\alpha_\pm}{\|B\|}\in I_\geq(A,B)$, which is a contradiction. Thus, $I_>(A,B)=(\la_-,\la_+)$.

Conversely, assume that \eqref{sufi pencil posi} holds for a given $\alpha>0$. Then, the selfadjoint operators $A':=A-\alpha I$
and $B$ satisfy \eqref{sufi pencil noneg}. Thus, there exist $\eta_-,\eta_+\in\RR$ such that
$\{\la\in\RR:\ A+\la B\geq\alpha I\}=I_\geq (A',B)=[\eta_-,\eta_+]$.
If $\rho\in[\eta_-,\eta_+]$ then $A+\rho B$ is a positive definite operator, i.e. $\rho\in I_>(A,B)$.  
\end{proof}

To conclude this section, we characterize the nullspaces of the operators associated to the extreme values of $I_\geq(A,B)$, i.e. $N(A+\la_- B)$ and $N(A+\la_+ B)$. To do so, we introduce the following sets:
\begin{align}\label{Ms}
\mc{M}_\pm &:=\displaystyle{\set{x\in\HH:\ \PI{Bx}{x}\neq0 \ \ \text{and}\ \ \frac{\PI{Ax}{x}}{\PI{Bx}{x}}=-\la_\mp}}. 
\end{align}

\begin{obs}
If $\la_-< \la_+$ then 
\[
\mc{M}_+ =\displaystyle{\set{x\in\HH:\ \PI{Bx}{x}>0 \ \ \text{and}\ \ \frac{\PI{Ax}{x}}{\PI{Bx}{x}}=-\la_-}},
\]
and
\[
\mc{M}_- =\displaystyle{\set{x\in\HH:\ \PI{Bx}{x}<0 \ \ \text{and}\ \ \frac{\PI{Ax}{x}}{\PI{Bx}{x}}=-\la_+}}.
\] 
Indeed, assume that there exists $x_0\in\HH$ such that $\PI{Bx_0}{x_0}<0$ and $x_0\in\M_+$. Then,
\[
-\la_+ = \sup_{\{x\in\HH:\PI{Bx}{x}<0\}}\frac{\PI{Ax}{x}}{\PI{Bx}{x}}\geq \frac{\PI{Ax_0}{x_0}}{\PI{Bx_0}{x_0}}=-\la_-,
\]
which is a contradiction to $\la_-<\la_+$. A similar argument holds to show the alternative description of $\M_-$.
\end{obs}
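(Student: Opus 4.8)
The plan is to show that, under the standing hypothesis $\la_-<\la_+$, the sign-agnostic condition $\PI{Bx}{x}\neq 0$ in the definition \eqref{Ms} of $\M_\pm$ can be sharpened to the definite sign conditions $\PI{Bx}{x}>0$ for $\M_+$ and $\PI{Bx}{x}<0$ for $\M_-$, without altering the prescribed value of the Rayleigh-type quotient. The key ingredient I would use is the variational description of the endpoints of $I_\geq(A,B)$, which follows from Proposition \ref{Azizov pencils} together with \eqref{mus} (via $\la_-=-\mu_+$ and $\la_+=-\mu_-$) and is already recorded in the introduction:
\[
-\la_- = \inf_{\{x\in\HH:\PI{Bx}{x}>0\}}\frac{\PI{Ax}{x}}{\PI{Bx}{x}}\quad\text{and}\quad -\la_+ = \sup_{\{x\in\HH:\PI{Bx}{x}<0\}}\frac{\PI{Ax}{x}}{\PI{Bx}{x}}.
\]
It suffices to treat $\M_+$ in detail, since the corresponding statement for $\M_-$ follows by the mirror-image argument, interchanging the roles of $\la_-$ and $\la_+$ and of the two sign regions.

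To establish the refined description of $\M_+$, I would argue by contradiction. Since $\M_+\subseteq\{x:\PI{Bx}{x}\neq 0\}$ by definition, the only possibility to rule out is that some $x_0\in\M_+$ satisfies $\PI{Bx_0}{x_0}<0$. Suppose such an $x_0$ exists. On the one hand, membership in $\M_+$ forces $\frac{\PI{Ax_0}{x_0}}{\PI{Bx_0}{x_0}}=-\la_-$. On the other hand, since $x_0$ lies in the region $\{x:\PI{Bx}{x}<0\}$, the supremum formula above yields
\[
-\la_- = \frac{\PI{Ax_0}{x_0}}{\PI{Bx_0}{x_0}} \leq \sup_{\{x\in\HH:\PI{Bx}{x}<0\}}\frac{\PI{Ax}{x}}{\PI{Bx}{x}} = -\la_+,
\]
whence $\la_+\leq\la_-$, contradicting $\la_-<\la_+$. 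Therefore every $x\in\M_+$ must satisfy $\PI{Bx}{x}>0$, which is exactly the claimed description. The argument for $\M_-$ is symmetric: if some $x_0\in\M_-$ had $\PI{Bx_0}{x_0}>0$, then $\frac{\PI{Ax_0}{x_0}}{\PI{Bx_0}{x_0}}=-\la_+$ would be bounded below by the infimum over $\{x:\PI{Bx}{x}>0\}$, giving $-\la_+\geq -\la_-$ and the same contradiction.

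I do not anticipate any genuine obstacle in carrying this out, as the entire content is a short contradiction: the strict separation $\la_-<\la_+$ makes the extremal value $-\la_-$ unattainable on the region where $\PI{Bx}{x}<0$, and symmetrically $-\la_+$ unattainable where $\PI{Bx}{x}>0$. The only point that requires attention is bookkeeping, namely invoking the correct one of the two variational formulas in each case and tracking the direction of the inequality, which is preserved here because the comparison is made directly between the prescribed quotient value and the sup/inf of the quotient over the relevant sign region rather than between the numerators.
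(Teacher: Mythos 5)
Your proposal is correct and coincides with the paper's own argument: both rule out a wrong-sign element $x_0$ of $\M_\pm$ by comparing the prescribed quotient value with the variational formulas $-\la_-=\inf_{\{\PI{Bx}{x}>0\}}\PI{Ax}{x}/\PI{Bx}{x}$ and $-\la_+=\sup_{\{\PI{Bx}{x}<0\}}\PI{Ax}{x}/\PI{Bx}{x}$, deriving $\la_+\leq\la_-$ and contradicting $\la_-<\la_+$. The only difference is cosmetic (you chain the inequality from $-\la_-$ up to $-\la_+$, while the paper writes it in the reverse order), so there is nothing to add.
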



\begin{lema}\label{lema:nucleo_lambda+}
Let $A,B\in \mc{L}(\HH)$ be selfadjoint operators and suppose  that $B$ is indefinite.    \begin{enumerate}[label=\roman*)]
\item[i)] if $\la_-\neq\la_+$ then $N(A+\la_\pm B)=\mc{M}_\mp\cup \parentesis{N(A)\cap N(B)}$.
\item[ii)] If $\la_-=\la_+$ then $\M_-=\M_+$ and $N(A+\la_\pm B)=\mc{M}_+\cup \parentesis{Q(A)\cap Q(B)}$.
\end{enumerate}
\end{lema}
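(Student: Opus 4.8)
The plan is to exploit the elementary but crucial fact that a positive semidefinite operator $T\in\mc{L}(\HH)$ satisfies $N(T)=\set{x\in\HH:\PI{Tx}{x}=0}$; indeed, Cauchy--Schwarz applied to the positive semidefinite form $(x,y)\mapsto\PI{Tx}{y}$ forces $\PI{Tx}{x}=0$ to imply $\PI{Tx}{y}=0$ for all $y$, hence $Tx=0$. Since $\la_\pm\in I_\geq(A,B)$, the operators $A+\la_\pm B$ are positive semidefinite, so that
\[
N(A+\la_\pm B)=\set{x\in\HH:\ \PI{Ax}{x}+\la_\pm\PI{Bx}{x}=0}.
\]
The whole argument then reduces to analyzing this neutral set, splitting according to whether $\PI{Bx}{x}$ vanishes or not.

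For item i, fix $\la_-\neq\la_+$ and take $x\in N(A+\la_+ B)$. If $\PI{Bx}{x}=0$ then $\PI{Ax}{x}=0$ as well, so $x\in Q(A)\cap Q(B)$; and by the first item of Proposition \ref{nucleo del interior} (available because $\la_-<\la_+$) one has $Q(A)\cap Q(B)=N(A)\cap N(B)$. If instead $\PI{Bx}{x}\neq0$, then $\frac{\PI{Ax}{x}}{\PI{Bx}{x}}=-\la_+$, which is exactly the defining condition for membership in $\mc{M}_-$ according to \eqref{Ms}. The reverse inclusion is immediate: any $x\in N(A)\cap N(B)$ lies in every $N(A+\la B)$, while any $x\in\mc{M}_-$ satisfies $\PI{(A+\la_+ B)x}{x}=0$ and hence belongs to $N(A+\la_+ B)$ by positivity. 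Running the same argument at $\la_-$ gives $N(A+\la_- B)=\mc{M}_+\cup\parentesis{N(A)\cap N(B)}$, completing item i.

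For item ii, write $\la_0:=\la_-=\la_+$. The equality $\mc{M}_-=\mc{M}_+$ is immediate from \eqref{Ms}, since both sets now impose $\frac{\PI{Ax}{x}}{\PI{Bx}{x}}=-\la_0$. Applying the same case split to $N(A+\la_0 B)$ yields $\mc{M}_+$ when $\PI{Bx}{x}\neq0$ and $Q(A)\cap Q(B)$ when $\PI{Bx}{x}=0$, the reverse inclusions again following from positivity; the key difference with item i is that here Proposition \ref{nucleo del interior} is unavailable (its hypothesis $\la_-<\la_+$ fails), so the identification $Q(A)\cap Q(B)=N(A)\cap N(B)$ can no longer be made and the statement must retain $Q(A)\cap Q(B)$.

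The only delicate point is the bookkeeping of the $\pm/\mp$ convention in \eqref{Ms}: one must check that $\mc{M}_-$, defined through the ratio $-\la_+$, is correctly paired with $N(A+\la_+ B)$, and dually for the opposite sign. No genuine analytic obstacle arises, as the entire content is carried by the positivity principle $N(T)=\set{x:\PI{Tx}{x}=0}$ together with the careful invocation of Proposition \ref{nucleo del interior} precisely in the range where it holds.
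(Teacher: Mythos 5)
Your proof is correct and follows essentially the same route as the paper's: both rest on the fact that for a positive semidefinite operator the nullspace equals the neutral set of its quadratic form, then split on whether $\PI{Bx}{x}$ vanishes, invoking Proposition \ref{nucleo del interior} to identify $Q(A)\cap Q(B)$ with $N(A)\cap N(B)$ only when $\la_-<\la_+$. The only difference is cosmetic: you make the Cauchy--Schwarz justification explicit and work at $\la_+$ where the paper works at $\la_-$.
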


\begin{proof}
First assume that $\la_-\neq\la_+$, and consider the left boundary $\la_-$ (similar considerations hold for the right one, $\la_+$). Since $A+\la_-B\in\mc{L}(\HH)^+$, we have that $x\in N(A+\la_-B)$ if and only if $\PI{Ax}{x}=-\la_-\PI{Bx}{x}$.

\smallskip

For $x\in\mc{M}_+$ or $x\in N(A)\cap N(B)$,
$\PI{Ax}{x}=-\la_-\PI{Bx}{x}$ holds,
thus the inclusion $\mc{M}_+\cup\big(N(A)\cap N(B)\big)\subseteq N(A+\la_-B)$ is immediate. 

Now, let $x\in N(A+\la_- B)$. On the one hand,
if $\PI{Bx}{x}=0$ then $\PI{Ax}{x}=-\la_-\PI{Bx}{x}=0$ and, by Lemma \ref{nucleo del interior}, $x\in Q(A)\cap Q(B)=N(A)\cap N(B)$. On the other hand, if $\PI{Bx}{x}\neq 0$ then
\[
\frac{\PI{Ax}{x}}{\PI{Bx}{x}}=-\la_-,
\]
so that $x\in\mc{M}_+$. 

\smallskip

Using the same procedure as above, if we assume that $\la_-=\la_+$ the assertion follows immediately. 
\end{proof}

If $\HH$ is finite dimensional then $\M_+$ and $\M_-$ are trivially non empty. However, the next example shows that the set $\M_+$ may be empty in the infinite dimensional setting. A similar example can be constructed to show that $\M_-$ may also be empty.

\begin{example}
Let us consider $\HH=\ell^2(\NN)$ endowed with the usual inner product:
\[
\PI{(x_n)_{n\in\Z}}{(y_n)_{n\in\NN}}=\sum_{n\in\NN}x_n \ol{y_n}, \qquad (x_n)_{n\in\NN},(y_n)_{n\in\NN}\in \HH.
\]
If $A, B\in\mc{L}(\HH)$ are given by 
\begin{align*}
A(x_n)_{n\in\NN} &=(y_n)_{n\in\NN}, \qquad \text{with}\ \ y_1=x_1,\ \text{and}\ y_n= -\left(1-\tfrac{1}{n}\right)x_n \ \text{if $n\neq 1$}, \\
B(x_n)_{n\in\NN} &=(z_n)_{n\in\NN}, \qquad \text{with}\ \ z_1=-\tfrac{1}{2}x_1,\ \text{and}\ z_n= x_n \ \text{if $n\neq 1$}, 
\end{align*}
then both $A$ and $B$ are selfadjoint indefinite operators. Given $\la\in\RR$,
\[
\PI{(A+\la B)(x_n)_{n\in\NN}}{(x_n)_{n\in\NN}}=\big(1-\tfrac{1}{2}\la\big)|x_1|^2 + \sum_{n\neq 1}\left(\la-\big(1-\tfrac{1}{n}\big)\right)|x_n|^2.
\]
Hence, $A + \la B\in \mc{L}(\HH)^+$ if and only if $1-\tfrac{1}{2}\la\geq 0$ and $\la-\big(1-\tfrac{1}{n}\big)\geq 0$
for every $n\in\NN\setminus\set{1}$. Therefore,
\[
\la_-=1 \quad \text{and} \quad \la_+=2.
\]
Observe that $N(A + \la_- B)=\{0\}$ because
\[
\PI{(A+\la_- B)(x_n)_{n\in\NN}}{(x_n)_{n\in\NN}}=\tfrac{1}{2} |x_1|^2 + \sum_{n\neq 1}\tfrac{1}{n} |x_n|^2,
\]
and it is zero if and only if $x_n=0$ for every $n\in\NN$. Thus, $\M_+=\varnothing$.
\end{example}

\bigskip\bigskip

Assume that $I_>(A,B)\neq\varnothing$ and let $\rho\in I_>(A,B)$. 
Considering the selfadjoint operator $G\in\mc{L}(\HH)$ given by
\[
G:=(A+ \rho B)^{-1/2} B\,(A+\rho B)^{-1/2},
\]
the pencil $P(\lambda)=A+\la B$ is then congruent to $I+(\la-\rho)G$. Indeed,
\[
A + \la B=(A + \rho B)^{1/2}\big(I + (\la-\rho)G \big)(A+\rho B)^{1/2}.
\]
This reduction is frequently used in the operator pencils context, see e.g. \cite[Chapter IV]{Gohberg} and \cite{Hmam}.
We end this section by characterizing the range of positiveness of operator pencils of such form, when $G$ is an indefinite operator.
 


\medskip

Given an indefinite selfadjoint operator $G\in\mc{L}(\HH)$, consider its canonical decomposition as the difference of two positive operators, i.e. consider the decomposition 
\begin{equation}\label{eq:descomposicion}
\HH=\HH_+\oplus\HH_-\oplus N(G),
\end{equation}
and let $G_\pm\in \mc{L}(\HH_\pm)^+$ be such that $G=\left(\begin{smallmatrix}
G_+ & 0 & 0 \\
0 & -G_- & 0 \\
0 & 0 & 0
\end{smallmatrix}\right)$ with respect to \eqref{eq:descomposicion}.


The following statement result describes the range of positiveness of the operator pencil $I+\eta G$ in terms of the the norms of the operators $G_+$ and $G_-$.

\begin{prop}
Given a selfadjoint operator $G\in\mc{L}(\HH)$, if $G$ is indefinite then
\[
I_\geq(I,G)=\big[-\|G_+\|^{-1},\|G_-\|^{-1}\big]\quad\quad\text{and}\quad\quad I_>(I,G)=\big(-\|G_+\|^{-1},\|G_-\|^{-1}\big).
\]
\end{prop}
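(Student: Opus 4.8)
The plan is to exploit the orthogonal decomposition \eqref{eq:descomposicion} together with the block-diagonal form of $G$. Writing an arbitrary $x\in\HH$ as $x=x_++x_-+x_0$ with $x_\pm\in\HH_\pm$ and $x_0\in N(G)$, the three summands are mutually orthogonal, so that
\[
\PI{(I+\rho G)x}{x}=\PI{(I+\rho G_+)x_+}{x_+}+\PI{(I-\rho G_-)x_-}{x_-}+\|x_0\|^2.
\]
Because $I+\rho G$ is block-diagonal with respect to an orthogonal decomposition, its (semi)definiteness on $\HH$ is equivalent to the (semi)definiteness of each diagonal block; the block on $N(G)$ is the identity, which is always positive definite and hence imposes no restriction. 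Thus $\rho\in I_\geq(I,G)$ if and only if $I+\rho G_+\geq 0$ on $\HH_+$ and $I-\rho G_-\geq 0$ on $\HH_-$.

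Next I would analyze each block through the spectral theorem. Since $G_+\in \mc{L}(\HH_+)^+$, its spectrum lies in $[0,\|G_+\|]$ and $\|G_+\|=\max\sigma(G_+)\in\sigma(G_+)$; as $G$ is indefinite, $\HH_+\neq\{0\}$ and so $\|G_+\|>0$. For $\rho\geq 0$ one has $I+\rho G_+\geq I\geq 0$, whereas for $\rho<0$ the bottom of the spectrum of $I+\rho G_+$ equals $1-|\rho|\,\|G_+\|$, so $I+\rho G_+\geq 0$ precisely when $\rho\geq-\|G_+\|^{-1}$. The symmetric computation for $G_-$ gives $I-\rho G_-\geq 0$ precisely when $\rho\leq\|G_-\|^{-1}$, again with $\|G_-\|>0$. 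Combining the two blocks yields $I_\geq(I,G)=\big[-\|G_+\|^{-1},\|G_-\|^{-1}\big]$, so that $\la_-=-\|G_+\|^{-1}$ and $\la_+=\|G_-\|^{-1}$.

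Finally, to obtain $I_>(I,G)$ I would avoid repeating the endpoint analysis and instead invoke Theorem \ref{pencil positivo}. Since $I\in GL(\HH)^+$ we have $0\in I_>(I,G)$, so $I_>(I,G)\neq\varnothing$; moreover $\la_-<0<\la_+$, whence $\la_-<\la_+$. Theorem \ref{pencil positivo} then identifies $I_>(I,G)$ with the interior $(\la_-,\la_+)=\big(-\|G_+\|^{-1},\|G_-\|^{-1}\big)$. The computation is essentially routine; the two points requiring care are the reduction of global (semi)definiteness to the diagonal blocks---which rests on the orthogonality of \eqref{eq:descomposicion} and the block structure of $G$---and the fact that $\|G_\pm\|$ is attained in $\sigma(G_\pm)$, which is what makes the boundary value $\rho=\mp\|G_\pm\|^{-1}$ give a merely semidefinite (not definite) block. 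I expect this boundary behavior, namely distinguishing the closed interval for $I_\geq$ from the open interval for $I_>$, to be the main subtlety, and appealing to Theorem \ref{pencil positivo} for $I_>(I,G)$ sidesteps it cleanly.
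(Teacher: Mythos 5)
Your proof is correct, but it computes the endpoints by a genuinely different mechanism than the paper. The paper's proof first invokes Proposition \ref{Azizov pencils} and Theorem \ref{pencil positivo} to get $I_\geq(I,G)=[\la_-,\la_+]$ with $\la_-<\la_+$ and $I_>(I,G)=(\la_-,\la_+)$, and only then identifies the endpoints through the variational formula $\la_-=-\big(\sup_{\{x:\PI{Gx}{x}>0\}}\PI{Gx}{x}/\|x\|^2\big)^{-1}$; the key step there is the inequality \eqref{eq:desigualdad_G}, which shows that the supremum of the Rayleigh-type quotient over the whole positivity cone $\set{x\in\HH:\PI{Gx}{x}>0}$ coincides with its supremum over $\HH_+\setminus\set{0}$, namely $\|G_+\|$ (and symmetrically $\la_+=\|G_-\|^{-1}$). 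You instead bypass the variational formulas entirely: you test semidefiniteness of $I+\rho G$ block by block along the decomposition \eqref{eq:descomposicion} and read off the admissible $\rho$ from the spectral theorem (for $\rho<0$, nonnegativity of $I+\rho G_+$ amounts to $1+\rho\sup\sigma(G_+)\geq 0$), which yields $I_\geq(I,G)=\big[-\|G_+\|^{-1},\|G_-\|^{-1}\big]$ directly; only afterwards do you call on Theorem \ref{pencil positivo}, via the cheap observation that $0\in I_>(I,G)$, to conclude that $I_>(I,G)$ is the open interval. Both arguments rest on the same two pillars (the canonical decomposition of $G$ and Theorem \ref{pencil positivo}), but your endpoint computation is more elementary and self-contained, whereas the paper's makes explicit how the abstract constants $\la_\pm$ of Proposition \ref{Azizov pencils} specialize in this example. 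One small remark: the attainment of $\|G_\pm\|$ in $\sigma(G_\pm)$, which you single out as a point requiring care, is in fact never needed---your $I_\geq$ computation only involves $\sup\sigma(G_\pm)$ (monotone affine functions have infimum over $\sigma(G_\pm)$ at the supremum of the spectrum, attained or not), and the exclusion of the endpoints from $I_>$ is delegated to Theorem \ref{pencil positivo} anyway.
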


\begin{proof}
By Proposition \ref{Azizov pencils} and Theorem \ref{pencil positivo}, $I_{\geq}(I,G)=[\lambda_-,\lambda_+]$ with $\lambda_-<\lambda_+$ and
$I_{>}(I,G)=(\lambda_-,\lambda_+)$.
Given $x\in\HH$, assume that it is decomposed as $x=x^+ + x^- + x^0$, where $x^\pm\in\HH_\pm$ and $x^0\in N(G)$. Also, assume that $x^+\neq 0$. Then,
\begin{equation}\label{eq:desigualdad_G}
\frac{\PI{Gx}{x}}{\|x\|^2}=\frac{\PI{G_+x^+}{x^+} - \PI{G_-x^-}{x^-} }{\|x^+\|^2 + \|x^-\|^2 + \|x^0\|^2} \leq \frac{\PI{G_+x^+}{x^+}}{\|x^+\|^2}=\frac{\PI{Gx^+}{x^+}}{\|x^+\|^2}.
\end{equation}
Since $\HH_+\setminus\set{0}\subseteq\set{x\in\HH\,:\,\PI{Gx}{x}>0}$,
\eqref{eq:desigualdad_G} implies that
\[
\sup_{x\in\HH:\PI{Gx}{x}>0)}\frac{\PI{Gx}{x}}{\|x\|^2}=\sup_{x\in\HH_+\setminus\{0\}} \frac{\PI{G_+x}{x}}{\|x\|^2}=\|G_+\|<+\infty,
\]
and consequently,
\begin{equation*}
\lambda_-= -\inf_{x\in\HH:\PI{Gx}{x}>0}\frac{\PI{x}{x}}{\PI{Gx}{x}}=-\Bigg(\sup_{x\in\HH:\PI{Gx}{x}>0}\frac{\PI{Gx}{x}}{\|x\|^2}\Bigg)^{-1}=-\|G_+\|^{-1}.
\end{equation*}

A similar procedure with vectors in $\HH_-$ proves that $\lambda_+=\|G_-\|^{-1}$.
\end{proof}

\section{On the regularization of a QP1EQC}\label{QPQC}

Along this section we apply the results obtained before to the regularization of the indefinite least-squares problem with a
quadratic constraint presented as  Problem \ref{pb 1}. Hereafter, given a Hilbert space $(\HH,\PI{\cdot}{\cdot})$, and
Krein spaces $(\KK,\K{\cdot}{\cdot}_\KK)$ and $(\EE,\K{\cdot}{\cdot}_{\EE})$,
let $T\in \mc{L}(\HH,\KK)$ and $V\in \mc{L}(\HH,\EE)$ be closed range operators.

\medskip

Given a real constant $\rho\neq0$, the regularization of Problem \ref{pb 1} consists in minimizing the functional
\[
x\mapsto \K{Tx-w_0}{Tx-w_0}_\KK + \rho\K{Vx-z_0}{Vx-z_0}_\EE.
\]
Defining the following indefinite inner product on $\KK\x \EE$:
\begin{equation}\label{met indef para KxE}
\K{(y,z)}{(y',z')}_{\rho}=\K{y}{y'}_\KK + \rho\K{z}{z'}_\EE,\quad\quad \textrm{$y,y'\in \KK$ and $z,z'\in\EE$},
\end{equation}
it is easy to see that $(\KK\x\EE,\K{\cdot}{\cdot}_\rho)$ is a Krein space. Also, if $L\in\mc{L}(\HH,\KK\x\EE)$ is defined by
\begin{equation}\label{L}
Lx=(Tx,Vx), \qquad x\in\HH,
\end{equation}
 the regularized problem can be restated as calculating
\begin{equation}\label{ILSP}
\min_{x\in\HH} \K{Lx-(w_0,z_0)}{Lx-(w_0,z_0)}_\rho,
\end{equation} 
which is the indefinite least-squares problem (without constraints) associated to the equation $Lx=(w_0,z_0)$. The existence of solutions to such problem was characterized in \cite{GMMP10_2,GMMP16} in terms of different properties of the range of $L$.

\medskip

We now present some preliminaries on Krein spaces.

\subsection{Preliminaries on Krein spaces}

In what follows we present the standard notation and some basic results on Krein spaces. For a complete exposition on the subject and the proofs of the results below see
\cite{Ando,Azizov,Bognar,Dritschel,Rovnyak}.

\medskip

An indefinite inner product space $(\mc{F}, \K{\cdot}{\cdot})$ is a (complex) vector space $\mc{F}$ endowed with a Hermitian sesquilinear form $\K{\cdot}{\cdot}: \mc{F}\x\mc{F} \ra \CC$.

A vector $x\in\mc{F}$ is {\it positive}, {\it negative}, or {\it neutral} if $\K{x}{x}>0$, $\K{x}{x}<0$, or $\K{x}{x}=0$, respectively.
The set of positive vectors in $\mc{F}$ is denoted by $\mathcal{P}^{++}(\mc{F})$, and the set of {\it nonnegative} vectors in $\mc{F}$
by $\mathcal{P}^+(\mc{F})$. The sets of negative, nonpositive and neutral vectors in $\mc{F}$ are defined analogously, and they are denoted by $\mathcal{P}^{--}(\mc{F})$, $\mathcal{P}^{-}(\mc{F})$, and $\mathcal{P}^0(\mc{F})$, respectively.

Likewise, a subspace $\M$ of $\mc{F}$ is {\it positive} if every $x\in\M$, $x\neq0$ is a positive vector in $\mc{F}$; and it is
{\it nonnegative} if $\K{x}{x} \geq0$ for every $x\in\M$. Negative, nonpositive and neutral subspaces are defined mutatis mutandis.

\medskip

%
%
%
%

If $\St$ is a subset of an indefinite inner product space $\mc{F}$, the \emph{orthogonal companion} to $\St$ is defined by 
\[
\St^{\ort}=\set{ x\in\mc{F} : \K{x}{s}=0 \; \text{for every $s\in\St$}}.
\]
It is easy to see that $\St^{\ort}$ is always a subspace of $\mc{F}$.

\begin{Def}
An indefinite inner product space $(\HH, \K{\cdot}{\cdot})$ is a \emph{Krein space} if it can be decomposed as a direct (orthogonal) sum of a Hilbert space and an anti Hilbert space, i.e. there
exist subspaces $\HH_\pm$ of $\HH$ such that $(\HH_+, \K{\cdot}{\cdot})$ and $(\HH_-, -\K{\cdot}{\cdot})$ are Hilbert spaces,
\begin{equation}\label{desc cano}
\HH=\HH_+ \dotplus \HH_-,
\end{equation}
and $\HH_+$ is orthogonal to $\HH_-$ with respect to the indefinite inner product. Sometimes we use the notation $\K{\cdot}{\cdot}_\HH$ instead of $\K{\cdot}{\cdot}$ to emphasize the Krein space considered.
\end{Def}

A pair of subspaces $\HH_\pm$ as in \eqref{desc cano} is called a \emph{fundamental decomposition} of $\HH$. Given a Krein space $\HH$ and a fundamental decomposition
$\HH=\HH_+\dotplus \HH_-$, the direct (orthogonal) sum of the Hilbert spaces $(\HH_+, \K{\cdot}{\cdot})$ and $(\HH_-, -\K{\cdot}{\cdot})$ is denoted
by $(\HH,\PI{\cdot}{\cdot})$.

If $\HH=\HH_+ \dotplus \HH_-$ and $\HH=\HH'_+ \dotplus \HH'_-$ are two different fundamental decompositions of $\HH$, the corresponding associated inner products $\PI{\cdot}{\cdot}$ and
$\PI{\cdot}{\cdot}'$ turn out to be equivalent on $\HH$. Therefore, the norm topology on $\HH$ does not depend on the chosen fundamental decomposition.

\medskip

If $(\HH,\K{\cdot}{\cdot}_\HH)$ and $(\KK,\K{\cdot}{\cdot}_\KK)$ are Krein spaces, $\mc{L}(\HH, \KK)$ stands for the vector space of linear transformations which are
bounded with respect to any of the associated Hilbert spaces $(\HH,\PI{\cdot}{\cdot}_\HH)$ and $(\KK,\PI{\cdot}{\cdot}_\KK)$. 
Given $T\in \mathcal{L}(\HH,\KK)$, the adjoint operator of $T$ (in the Krein spaces sense) is the unique operator $T^\#\in \mc{L}(\KK, \HH)$ such that
\[
\K{Tx}{y}_\KK=\K{x}{T^\#y}_\HH, \ \ \ \ x\in\HH ,y\in\KK.
\]

We frequently use that if $T\in \mc{L}(\HH,\KK)$ and $\M$ is a closed subspace of $\KK$ then 
\begin{equation}\label{preimag}
T^\#(\M)^{\ort_\HH}= T^{-1}(\M^{\ort_\KK}).
\end{equation}

Given a subspace $\M$ of a Krein space $\HH$, the {\it isotropic part} of $\M$ is defined by $\M^\circ:=\M\cap\M^{\ort}$. Then, $\M$ is {\it nondegenerate} if $\M^\circ=\set{0}$.

A subspace $\M$ of a Krein space $\HH$ is {\it pseudo-regular} if $\M+\M^{\ort}$ is a closed subspace of $\HH$, and it is {\it regular} if $\M+\M^{\ort}=\HH$. Regular subspaces are examples of
nondegenerate subspaces, but pseudo-regular subspaces can be degenerate ones. However, if $\M$ is a pseudo-regular subspace then there exists a regular subspace $\mc{R}$ such that 
\begin{equation}\label{desc}
\M=\M^\circ \sdo\ \mc{R},
\end{equation} 
where $\sdo$ stands for the direct orthogonal sum with respect to the indefinite inner product $\K{\cdot}{\cdot}$.
In fact, any closed subspace $\mc{R}$ such that $\M=\M^\circ \dotplus \mc{R}$ satisfies \eqref{desc} and
turns out to be a regular subspace of $\HH$, see e.g. \cite{G84}. 
Note that a subspace $\M$ is regular if and only if it is pseudo-regular and nondegenerate.

The following propositions can be found in \cite[Lemma 3.4]{GMMP16} and \cite[Chapter 1, \S 7]{Azizov}, respectively.

\begin{prop}\label{preeliminares pseudo regular}
Given Krein spaces $\HH$ and $\KK$, let $T\in \mc{L}(\HH,\KK)$ with closed range. Then, $R(T)$ is pseudo-regular if and only if $R(T^\#T)$ is closed. 
\end{prop}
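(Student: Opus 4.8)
The plan is to reduce the statement to the closed-range criterion for products of operators in Proposition \ref{R(AB)}. The crucial observation is that the orthogonal companion of $R(T)$ coincides with the nullspace of the Krein-space adjoint, namely $R(T)^{\ort}=N(T^\#)$. Indeed, $y\in R(T)^{\ort}$ means $\K{Tx}{y}_\KK=0$ for every $x\in\HH$, which by the defining property of $T^\#$ amounts to $\K{x}{T^\# y}_\HH=0$ for every $x\in\HH$; since the inner product of the Krein space $\HH$ is nondegenerate, this holds exactly when $T^\# y=0$. (Alternatively, this identity is the instance of \eqref{preimag} obtained by applying it to $T^\#$ with $\M=\HH$, using $(T^\#)^\#=T$.) With this in hand, the pseudo-regularity of $R(T)$, i.e. the closedness of $R(T)+R(T)^{\ort}$, becomes the closedness of $R(T)+N(T^\#)$.

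Next I would verify that $T^\#$ itself has closed range, so that Proposition \ref{R(AB)} is applicable. The Krein adjoint $T^\#$ differs from the Hilbert-space adjoint $T^*$ (relative to the associated inner products) only by left and right composition with the fundamental symmetries of $\HH$ and $\KK$, which are bounded and boundedly invertible. Hence $T^\#$ has closed range if and only if $T^*$ does, and this in turn holds if and only if $T$ has closed range. Since $T$ has closed range by hypothesis, so does $T^\#$.

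Now I would apply Proposition \ref{R(AB)} with $A=T^\#\in\mc{L}(\KK,\HH)$ and $B=T\in\mc{L}(\HH,\KK)$, both of which have closed range. The proposition asserts that the product $AB=T^\#T$ has closed range if and only if $R(B)+N(A)=R(T)+N(T^\#)$ is closed. Combining this with the identity $N(T^\#)=R(T)^{\ort}$ from the first step yields the chain of equivalences: $R(T^\#T)$ is closed if and only if $R(T)+R(T)^{\ort}$ is closed, if and only if $R(T)$ is pseudo-regular, which is exactly the desired statement.

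The only genuinely non-routine ingredients are the identification $R(T)^{\ort}=N(T^\#)$ together with the fact that $T^\#$ has closed range; once these standard Krein-space facts are isolated, the result is an immediate application of the Hilbert-space product criterion. I would be mindful that Proposition \ref{R(AB)} is stated for Hilbert spaces, but since closedness of a range is a purely topological property and the norm topologies on $\HH$ and $\KK$ do not depend on the chosen fundamental decompositions, the criterion transfers verbatim to the present Krein-space setting.
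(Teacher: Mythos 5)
Your proof is correct, but note that the paper itself offers no proof of this proposition: it is stated as a known result with a citation to \cite[Lemma 3.4]{GMMP16}, so there is no internal argument to compare against. Your route --- identifying $R(T)^{\ort}=N(T^{\#})$ via nondegeneracy of the indefinite inner product (or via \eqref{preimag} applied to $T^{\#}$), checking that $T^{\#}=J_{\HH}T^{*}J_{\KK}$ inherits closed range from $T$ through the fundamental symmetries, and then reading the equivalence off Izumino's product criterion (Proposition \ref{R(AB)}) applied to $A=T^{\#}$, $B=T$ --- is the natural one, and it is essentially the argument of the cited reference. You also correctly handle the one point where care is needed: Proposition \ref{R(AB)} is a Hilbert-space statement, but since $T$ and $T^{\#}$ are bounded operators between the associated Hilbert spaces and closedness of $R(T)+N(T^{\#})$ is a purely topological property independent of the chosen fundamental decompositions, the criterion applies verbatim. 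The result is a complete, self-contained proof using only tools already available in the paper.
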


A subspace $\M$ of a Krein space $(\HH,\K{\cdot}{\cdot})$ is {\it uniformly positive} if 
there exists $\alpha>0$ such that 
\[
\K{x}{x}\geq\alpha\|x\|^2\quad \text{ for every $x\in\M$},
\]
where $\|\cdot\|$ is the norm of any associated Hilbert space. Uniformly negative subspaces are defined mutatis mutandis.

\begin{prop}\label{preeliminares regular}
Let $\M$ be a subspace of a Krein space $\HH$. Then, $\M$ is closed and uniformly positive (resp. negative) if and only if $\M$ is regular and nonnegative (resp. nonpositive).
\end{prop}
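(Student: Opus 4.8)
The plan is to fix a fundamental symmetry $J$ for $\HH$: writing $\HH=\HH_+\dotplus\HH_-$ and $J=P_+-P_-$, the associated Hilbert-space inner product is $\PI{x}{y}=\K{Jx}{y}$, and $J$ is a bounded selfadjoint involution with $\|J\|=1$, so that $\K{x}{y}=\PI{Jx}{y}$. In these terms the orthogonal companion becomes $\M^{\ort}=J(\M^{\perp})$, where $\perp$ denotes Hilbert-orthogonality; in particular $\M^{\ort}$ is always closed and $\M^{\ort}=\ol{\M}^{\ort}$. For a closed subspace $\N$ the selfadjoint Gram operator $G:=P_{\N}J|_{\N}\in\mc{L}(\N)$ satisfies $\PI{Gx}{x}=\K{x}{x}$, so that nonnegativity of $\N$ reads $G\geq0$, while $\N$ closed and uniformly positive reads $G\geq\alpha I$ for some $\alpha>0$, i.e. $G$ invertible. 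The whole problem thus reduces to tying invertibility of $G$ to regularity, and the negative case follows by replacing $\K{\cdot}{\cdot}$ with $-\K{\cdot}{\cdot}$, which leaves $\M^{\ort}$ unchanged.

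For the implication that $\M$ closed and uniformly positive forces $\M$ regular and nonnegative, nonnegativity is immediate, and regularity I would get by Riesz representation: given $z\in\HH$, the functional $y\mapsto\K{z}{y}=\PI{P_{\M}Jz}{y}$ on the Hilbert space $\M$ is represented by $w:=P_{\M}Jz\in\M$, and since $G\geq\alpha I$ is invertible the vector $m:=G^{-1}w\in\M$ satisfies $\K{m}{y}=\PI{Gm}{y}=\PI{w}{y}=\K{z}{y}$ for all $y\in\M$. Hence $n:=z-m\in\M^{\ort}$ and $z=m+n$, so $\M+\M^{\ort}=\HH$.

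For the converse, suppose $\M$ is regular and nonnegative. Since nonnegativity and $\M^{\ort}$ pass to $\ol{\M}$, and $\ol{\M}+\ol{\M}^{\ort}\supseteq\M+\M^{\ort}=\HH$, the closed subspace $\N:=\ol{\M}$ is again regular and nonnegative, so I first treat $\N$ and then recover $\M=\N$. Writing $J$ in block form $\left(\begin{smallmatrix}G&B\\ B^*&D\end{smallmatrix}\right)$ with respect to $\HH=\N\oplus\N^{\perp}$, a direct computation of $\N+\N^{\ort}=\N+J(\N^{\perp})$ shows that regularity is equivalent to surjectivity---hence, $D$ being selfadjoint, to invertibility---of the corner $D$. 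If $G$ were not uniformly positive there would be unit vectors $x_n\in\N$ with $Gx_n\to0$; the involution identities coming from $J^2=I$ give $BB^*=I-G^2$ and $DB^*=-B^*G$, so $u_n:=B^*x_n$ obeys $\|u_n\|^2=1-\|Gx_n\|^2\to1$ while $Du_n=-B^*Gx_n\to0$, contradicting invertibility of $D$. Thus $G\geq\alpha I$, i.e. $\N$ is uniformly positive and in particular nondegenerate; comparing $\HH=\N\dotplus\N^{\ort}$ with $\HH=\M\dotplus\M^{\ort}$ (using $\N^{\ort}=\M^{\ort}$ and $\N^{\circ}=\{0\}$) then forces $\M=\N$, so $\M$ is closed and uniformly positive.

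The main obstacle is this converse, and precisely the bookkeeping that converts the geometric hypothesis $\M+\M^{\ort}=\HH$ into the analytic assertion that the complementary Gram block $D$ is boundedly invertible; once that identification is secured, the relations forced by $J^2=I$ make the approximate-eigenvector argument short. The remaining delicate point is the final descent from $\N=\ol{\M}$ back to $\M$, which depends on the nondegeneracy that uniform positivity of $\N$ provides.
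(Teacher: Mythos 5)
Your proof is correct. There is essentially nothing in the paper to compare it against: the proposition is stated without proof and attributed to Azizov--Iokhvidov \cite[Chapter 1, \S 7]{Azizov}, so what you have produced is a self-contained reconstruction of the standard Gram-operator argument behind that citation. All the key steps check out. Fixing a fundamental symmetry $J$ indeed gives $\M^{\ort}=J(\M^{\perp})$, hence $\M^{\ort}=\ol{\M}^{\ort}$ is closed, and uniform positivity of a closed $\M$ is equivalent to invertibility of the nonnegative Gram operator $G=P_{\M}J|_{\M}$. Your forward direction (solving $Gm=P_{\M}Jz$ by Riesz representation and splitting $z=m+(z-m)\in\M+\M^{\ort}$) is clean and complete. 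In the converse, the two reductions are sound: passing to $\N=\ol{\M}$ is legitimate because nonnegativity survives closure and $\M^{\ort}=\N^{\ort}$; and writing $J=\left(\begin{smallmatrix}G&B\\ B^{*}&D\end{smallmatrix}\right)$ with respect to $\HH=\N\oplus\N^{\perp}$, the computation $\N+J(\N^{\perp})=\set{(c,Du): c\in\N,\ u\in\N^{\perp}}$ really does identify regularity with surjectivity of $D$, which selfadjointness plus the open mapping theorem upgrades to bounded invertibility. The $J^{2}=I$ identities $BB^{*}=I-G^{2}$ and $DB^{*}=-B^{*}G$ then give the contradiction exactly as you say. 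The only micro-step you elide is that failure of uniform positivity yields unit vectors with $Gx_{n}\to 0$ (not merely $\PI{Gx_{n}}{x_{n}}\to 0$); this uses $G\geq 0$, via $\|Gx_{n}\|\leq\|G^{1/2}\|\,\PI{Gx_{n}}{x_{n}}^{1/2}$, and nonnegativity of $\N$ supplies it. The final descent from $\N$ to $\M$, comparing the two direct decompositions of $\HH$ and using $\N\cap\N^{\ort}=\set{0}$, is also right. What your version buys is a short, elementary, fully self-contained proof; what the cited textbook treatment buys is the general Gram/angular-operator machinery that handles the whole hierarchy of semidefinite, definite, and uniformly definite subspaces uniformly, of which this equivalence is one instance.
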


\subsection{The operator pencil associated to the QP1EQC}\label{props rango de L}

As we mentioned before, the regularization of Problem \ref{pb 1} is the indefinite least-squares problem (without constraints) determined by the indefinite inner product $\K{\cdot}{\cdot}_\rho$ in $\KK\x\EE$ and the operator $L$, see \eqref{met indef para KxE},\eqref{L}, and \eqref{ILSP}. In particular, if $R(L)$ is a (closed) nonnegative pseudo-regular subspace of $\KK\x\EE$ and $(w_0,z_0)\in R(L)+R(L)^{\ort}$ the vectors attaining \eqref{ILSP} are exactly the solutions of the {\it normal equation}: 
\[
L^\#(Lx-(w_0,z_0))=0,
\]
where $L^\#$ is the adjoint of $L$ with respect to the indefinite inner product $\K{\cdot}{\cdot}_\rho$, see \cite[Theorem 3.5]{GMMP16}.

\medskip

The adjoint operator of $L$ 
is given by
\[
L^\#(y,z)=T^\#y + \rho V^\#z, \quad (y,z)\in\KK\x\EE.
\]
Indeed, if $x\in\HH$ and $(y,z)\in\KK\x\EE$, we have that
\begin{align*}
\K{Lx}{(y,z)}_\rho &=  \K{Tx}{y}_\KK + \rho\K{Vx}{z}_\EE=\PI{x}{T^\#y} + \rho\PI{x}{V^\#z} \\ &=\PI{x}{T^\#y + \rho V^\#z}.
\end{align*}
Moreover, it is easy to see that 
\begin{equation*}
L^\#L=T^\#T+ \rho V^\#V,
\end{equation*}
and the normal equation turns into
\[
\big(T^\#T+ \rho V^\#V\big)x=T^\#w_0 +\rho V^\#z_0.
\]
Hence, the variation of the regularization parameter $\rho$ brings into consideration the study of the operator pencil $P(\la)= T^\#T+ \la V^\#V$ determined by the selfadjoint operators $T^\#T$ and $V^\#V$.

\bigskip

From the inclusions $N(L)\subseteq N(L^\#L)$ and $R(L^\#L)\subseteq R(L^\#)$ it is immediate that
$$N(T)\cap N(V)\subseteq N(L^\#L)\quad\text{and}\quad R(L^\#L)\subseteq N(T)^\bot+N(V)^\bot.$$ 

The next result describes in detail different properties of $R(L)$ as a subspace of the Krein space $(\KK\x\EE,\K{\cdot}{\cdot}_\rho)$, in terms of the operators $T$ and $V$. 

\begin{prop}\label{props rho fijo}
Given closed range operators $T\in\mc{L}(\HH,\KK)$ and $V\in \mc{L}(\HH,\EE)$, consider the regularization operator $L\in\mc{L}(\HH,\KK\x\EE)$. Then,
   \begin{enumerate}[label=\roman*)]
	\item $R(L)$ is closed if and only if $N(T)+N(V)$ is a closed subspace;
	\item $R(L)$ is nondegenerate if and only if $N(L^\#L)=N(T)\cap N(V)$;
	\item $R(L)$ is pseudo-regular if and only if $N(T)+N(V)$ and $R(L^\#L)$ are closed subspaces.
\end{enumerate}
\end{prop}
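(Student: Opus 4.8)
The plan is to reduce all three statements to properties of the adjoint $L^\#$ and of $L^\#L=T^\#T+\rho V^\#V$, after recording a few elementary facts used throughout. First, $N(L)=N(T)\cap N(V)$, since $Lx=(Tx,Vx)=0$ forces $Tx=Vx=0$; and the inclusion $N(L)\subseteq N(L^\#L)$ always holds. Second, for the orthogonal companion I would establish $R(L)^{\ort}=N(L^\#)$: indeed $(y,z)\in R(L)^{\ort}$ iff $\K{Lx}{(y,z)}_\rho=\PI{x}{L^\#(y,z)}=0$ for every $x\in\HH$, which is equivalent to $L^\#(y,z)=0$. Note also that, since $\HH$ is a Hilbert space, $L^\#$ and the Hilbert adjoint $L^*$ have the same range, and likewise for $T$ and $V$; in particular closedness of a range does not depend on whether one uses $\#$ or $*$.

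For item \textit{i}, I would use that $R(L)$ is closed iff $R(L^\#)$ is closed, together with $R(L^\#)=R(T^\#)+R(V^\#)$ (read off from $L^\#(y,z)=T^\#y+\rho V^\#z$ and $\rho\neq 0$). Since $T$ and $V$ have closed range, $R(T^\#)=N(T)^{\perp}$ and $R(V^\#)=N(V)^{\perp}$, so $R(L^\#)=N(T)^{\perp}+N(V)^{\perp}$. The claim then follows from the well-known equivalence, for closed subspaces $\M,\N$ of a Hilbert space, that $\M+\N$ is closed iff $\M^{\perp}+\N^{\perp}$ is closed, applied to $\M=N(T)$ and $\N=N(V)$.

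For item \textit{ii}, the isotropic part of $R(L)$ is
\[
R(L)^{\circ}=R(L)\cap R(L)^{\ort}=R(L)\cap N(L^\#)=L\big(N(L^\#L)\big),
\]
the last equality being checked directly: $Lx$ lies in $N(L^\#)$ iff $L^\#Lx=0$. Hence $R(L)$ is nondegenerate iff $L\big(N(L^\#L)\big)=\{0\}$, i.e. iff $N(L^\#L)\subseteq N(L)$. Combined with the standing inclusion $N(L)\subseteq N(L^\#L)$, this is exactly $N(L^\#L)=N(L)=N(T)\cap N(V)$.

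For item \textit{iii}, the engine is Proposition \ref{preeliminares pseudo regular} applied to $L$: when $L$ has closed range, $R(L)$ is pseudo-regular iff $R(L^\#L)$ is closed. The subtlety — and the step I expect to be the main obstacle — is that this proposition presupposes $R(L)$ closed, so closedness must be secured before it can be invoked. For the implication ($\Leftarrow$), if $N(T)+N(V)$ is closed then $R(L)$ is closed by item \textit{i}, and then closedness of $R(L^\#L)$ yields pseudo-regularity via Proposition \ref{preeliminares pseudo regular}. For ($\Rightarrow$), I would use that a pseudo-regular subspace is in particular closed (this is the convention under which Proposition \ref{preeliminares pseudo regular} is stated, and precisely the reason ``$N(T)+N(V)$ closed'' must appear on the right-hand side); then item \textit{i} gives $N(T)+N(V)$ closed, and Proposition \ref{preeliminares pseudo regular} gives $R(L^\#L)$ closed. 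The delicate point to treat with care is exactly this entailment ``pseudo-regular $\Rightarrow$ closed'' for the operator range $R(L)$; once it is granted, items \textit{i} and \textit{iii} dovetail and the equivalence is immediate.
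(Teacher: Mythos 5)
Your proposal is correct and takes essentially the same route as the paper's proof: item \textit{i} via $R(L^\#)=R(T^\#)+R(V^\#)=N(T)^\bot+N(V)^\bot$ together with the duality between closedness of $N(T)+N(V)$ and of the sum of the orthogonal complements, item \textit{ii} via the identity $R(L)^\circ=L\big(N(L^\#L)\big)$ combined with $N(L)\subseteq N(L^\#L)$, and item \textit{iii} by reduction to Proposition \ref{preeliminares pseudo regular} through item \textit{i}. The only difference is that you make explicit the convention that pseudo-regular subspaces are closed, a point the paper's proof of item \textit{iii} uses silently when passing from pseudo-regularity of $R(L)$ to closedness of $N(T)+N(V)$.
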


\begin{proof} 
\noi {\it i.} \ On the one hand, $R(L)$ is closed if and only if $R(L^\#)$ is closed. On the other hand,
\[
R(L^\#)=R(T^\#) + R(V^\#) =N(T)^\bot + N(V)^\bot.
\]
Then, $R(L)$ is closed if and only if $N(T)^\bot + N(V)^\bot$ is closed. By \cite{Deutsch}, this is also equivalent to $N(T)+N(V)$ being a closed subspace.

\smallskip



\noi {\it ii.} \ First, let us show that $R(L)^\circ= L(N(L^\#L))$. To this end, note that 
\[
N(L^\#L)=L^{-1}(N(L^\#))=L^{-1}\big(R(L)^{\ort}\big)=L^{-1}(R(L)^{\circ}).
\]
Applying $L$ to both sides of the above equality, we get that $L(N(L^\#L))=R(L)^\circ$. Therefore, $R(L)^\circ=\{0\}$ if and only if 
\[
N(L^\#L)\subseteq N(L)=N(T)\cap N(V).
\]
Since the inclusion $N(L)\subseteq N(L^\#L)$ always holds, the desired equivalence follows.

\smallskip

\noi {\it iii.} \ By item {\it i}, $L$ is a closed range operator if and only the sum of the nullspaces of $T$ and $V$ is closed in $\HH$.
But, for a closed range operator $L$, the pseudo-regularity of $R(L)$ is equivalent to the closedness of $R(L^\#L)$,
see Proposition \ref{preeliminares pseudo regular}.
%
%
%
\end{proof}

As a corollary, in the particular case that $V$ is surjective, the regularity of the range of $L$  can be also
characterized by a range inclusion.

\begin{cor}\label{cor:regular}
Assume that $V$ is surjective and consider $\rho\neq0$. Then, $R(L)$ is a regular subspace of $(\KK\x\EE,\K{\cdot}{\cdot}_\rho)$ if and only if
\[
R(T^\#)\subseteq R(L^\#L).
\]
%
\end{cor}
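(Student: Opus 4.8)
The plan is to recast regularity of $R(L)$ as a range equality and then exploit the surjectivity of $V$ to trade $R(L^\#)$ for $R(T^\#)$. The cornerstone is a general observation, valid for any $L\in\mc{L}(\HH,\KK\x\EE)$ with no hypothesis on $T$, $V$ or $\rho$:
\[
R(L)\ \text{is regular}\quad\Longleftrightarrow\quad R(L^\#L)=R(L^\#).
\]
To justify this I would start from $R(L)^{\ort}=N(L^\#)$, which is immediate from the defining identity of $L^\#$. Writing $R(L^\#L)=L^\#\big(R(L)\big)$ and $R(L^\#)=L^\#(\KK\x\EE)$, the inclusion $R(L^\#)\subseteq R(L^\#L)$ holds precisely when every $w\in\KK\x\EE$ splits as $w=u+n$ with $u\in R(L)$ and $n\in N(L^\#)=R(L)^{\ort}$; that is, when $R(L)+R(L)^{\ort}=\KK\x\EE$, which is exactly the definition of regularity. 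Since the reverse inclusion $R(L^\#L)\subseteq R(L^\#)$ is automatic, the equivalence follows.

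With this reduction, the corollary amounts to proving $R(L^\#L)=R(L^\#)\Leftrightarrow R(T^\#)\subseteq R(L^\#L)$. Recalling $L^\#(y,z)=T^\#y+\rho V^\# z$ and $\rho\neq0$, one has $R(L^\#)=R(T^\#)+R(V^\#)$. The forward implication is then trivial: if $R(L^\#L)=R(L^\#)$ then $R(T^\#)\subseteq R(L^\#)=R(L^\#L)$.

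For the converse—the only place where surjectivity of $V$ is used—I would first establish the auxiliary inclusion $R(V^\#)\subseteq R(T^\#)+R(L^\#L)$. Given $z\in\EE$, surjectivity of $V$ yields $x\in\HH$ with $Vx=z$, and then
\[
\rho V^\# z=\rho V^\# Vx=(T^\#T+\rho V^\#V)x-T^\#(Tx)=L^\#Lx-T^\#(Tx),
\]
so that $\rho V^\# z\in R(L^\#L)+R(T^\#)$; dividing by $\rho\neq0$ gives the claimed inclusion. Assuming now $R(T^\#)\subseteq R(L^\#L)$, this inclusion forces $R(V^\#)\subseteq R(L^\#L)$, and therefore $R(L^\#)=R(T^\#)+R(V^\#)\subseteq R(L^\#L)$. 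Combined with the automatic reverse inclusion this gives $R(L^\#L)=R(L^\#)$, i.e.\ $R(L)$ is regular.

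I expect the main obstacle to be purely conceptual rather than computational: the delicate step is the bookkeeping in the first paragraph, namely correctly identifying $R(L)^{\ort}=N(L^\#)$ and translating the range equality $R(L^\#L)=R(L^\#)$ into the decomposition $\KK\x\EE=R(L)+R(L)^{\ort}$. Once that equivalence is in place, everything reduces to the one-line surjectivity computation above, in which the hypotheses ``$V$ surjective'' and ``$\rho\neq0$'' are precisely what make the argument run.
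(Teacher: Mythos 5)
Your proof is correct, and it genuinely reorganizes the argument on one of the two implications. Your central observation --- that for any bounded $L$ one has $R(L)$ regular in $(\KK\x\EE,\K{\cdot}{\cdot}_\rho)$ if and only if $R(L^\#L)=R(L^\#)$, proved via the splitting $w=Lx+(w-Lx)$ with $w-Lx\in N(L^\#)=R(L)^{\ort}$ --- is exactly the mechanism the paper uses, but only in the ``if'' direction: there, the hypothesis $R(T^\#)\subseteq R(L^\#L)$ is upgraded to $R(L^\#L)=R(L^\#)$ by the same surjectivity argument (the paper phrases it as $R(V^\#)=R(V^\#V)\subseteq R(L^\#L)$, you phrase it pointwise through $\rho V^\#Vx=L^\#Lx-T^\#Tx$; this is the same algebra), and regularity is then read off from the splitting. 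For the ``only if'' direction the paper proceeds differently: regular means nondegenerate plus pseudo-regular, so Proposition~\ref{props rho fijo} yields $N(L^\#L)=N(T)\cap N(V)$ and $R(L^\#L)$ closed, whence $R(T^\#)=N(T)^\bot\subseteq \big(N(T)\cap N(V)\big)^\bot=N(L^\#L)^\bot=R(L^\#L)$. Your version of this direction is more elementary and more general: since $R(T^\#)\subseteq R(L^\#)$ trivially (as $T^\#y=L^\#(y,0)$), your equivalence settles it without the closed-range hypotheses on $T$ and $V$ and without any structural results from Subsection~\ref{props rango de L}. What the paper's route buys in exchange is the explicit link between regularity and the nondegeneracy/pseudo-regularity criteria that organize the rest of the section. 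Note also that your ``iff'' observation, specialized to this setting, is essentially what the paper records separately as Corollary~\ref{cor:suma_nucleos}, where $R(L^\#)=N(T)^\bot+N(V)^\bot$; in effect you have proved that corollary under weaker hypotheses.
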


\begin{proof} 
If $R(L)$ is a regular subspace, then $R(L)$ is nondegenerate and pseudo-regular. Applying Proposition \ref{props rho fijo},
it follows that $N(L^\#L)=N(T)\cap N(V)$ and $R(L^\#L)$ is closed.
Hence,
\[
R(T^\#)=N(T)^\bot\subseteq \parentesis{N(T)\cap N(V)}^\bot=N(L^\#L)^\bot=R(L^\#L).
\]

Conversely, assume that $R(T^\#)\subseteq R(L^\#L)$. Then
$R(T^\#T)\subseteq R(T^\#)\subseteq R(L^\#L)$, or equivalently, $R(V^\#)= R(V^\#V)\subseteq R(L^\#L)$.
Hence, $R(L^\#L)=R(T^\#) + R(V^\#)=R(L^\#)$ and given $(y,z)\in\KK\x\EE$
there exists $x_0\in\HH$ such that $Lx_0-(y,z)\in N(L^\#)=R(L)^\ort$. Therefore, $(y,z)=Lx_0 + \big((y,z)-Lx_0\big)\in R(L)+R(L)^{\ort}$.  
\end{proof}

Considering that $R(T^\#)=N(T)^\bot$ and $R(V^\#)=N(V)^\bot$, the proof of the above corollary implies the following.

\begin{cor}\label{cor:suma_nucleos}
Assume that $V$ is surjective. Given $\rho\neq0$, $R(L)$ is a regular subspace of $(\KK\x\EE,\K{\cdot}{\cdot}_\rho)$ if and only if 
\[
R(L^\#L)=N(T)^\bot+N(V)^\bot.
\]
\end{cor}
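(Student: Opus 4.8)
The plan is to obtain this as an immediate reformulation of Corollary \ref{cor:regular}. First I would record the unconditional facts, already established in the proof of that corollary, that $R(L^\#L)\subseteq R(L^\#)$ and that
\[
R(L^\#)=R(T^\#)+R(V^\#)=N(T)^\bot+N(V)^\bot,
\]
the last equality being exactly the identities $R(T^\#)=N(T)^\bot$ and $R(V^\#)=N(V)^\bot$ quoted before the statement. Consequently the equation $R(L^\#L)=N(T)^\bot+N(V)^\bot$ appearing in the claim is nothing but a restatement of $R(L^\#L)=R(L^\#)$.

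Next I would verify that this equality is equivalent to the range inclusion $R(T^\#)\subseteq R(L^\#L)$ that characterizes the regularity of $R(L)$ in Corollary \ref{cor:regular}. One direction is trivial: if $R(L^\#L)=R(L^\#)$ then $R(T^\#)\subseteq R(L^\#)=R(L^\#L)$. For the other direction I would reproduce the argument from the converse part of the proof of Corollary \ref{cor:regular}. Namely, if $R(T^\#)\subseteq R(L^\#L)$ then $R(T^\#T)\subseteq R(T^\#)\subseteq R(L^\#L)$, and since $\rho V^\#V=L^\#L-T^\#T$ with $\rho\neq0$, this forces $R(V^\#V)\subseteq R(L^\#L)$. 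Here the surjectivity of $V$ enters decisively through the identity $R(V^\#V)=V^\#(V(\HH))=V^\#(\EE)=R(V^\#)$, so that $R(L^\#L)\supseteq R(T^\#)+R(V^\#)=R(L^\#)$ and therefore $R(L^\#L)=R(L^\#)$.

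Finally, chaining these equivalences with Corollary \ref{cor:regular} yields that $R(L)$ is a regular subspace of $(\KK\x\EE,\K{\cdot}{\cdot}_\rho)$ if and only if $R(T^\#)\subseteq R(L^\#L)$, if and only if $R(L^\#L)=R(L^\#)=N(T)^\bot+N(V)^\bot$, which is precisely the asserted characterization. I do not anticipate any genuine obstacle, since the whole argument is bookkeeping around Corollary \ref{cor:regular}; the single point deserving attention is that the surjectivity of $V$ is exactly what guarantees $R(V^\#V)=R(V^\#)$, without which the inclusion $R(T^\#)\subseteq R(L^\#L)$ would not propagate to the full adjoint range $R(L^\#)$.
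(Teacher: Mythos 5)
Your proposal is correct and takes essentially the same route as the paper: the authors obtain this corollary precisely by combining the proof of Corollary \ref{cor:regular} with the identities $R(T^\#)=N(T)^\bot$ and $R(V^\#)=N(V)^\bot$, which is exactly the bookkeeping you carry out (including the key use of surjectivity of $V$ to get $R(V^\#V)=R(V^\#)$). No gaps.
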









\medskip

The above results give rise to a natural question: if the range of $L$ has some of the listed properties for a particular choice of $\rho$, does the property remain under perturbations of the parameter? The next section is devoted to analyzing this question.

\subsection{Varying the regularization parameter} 

Now we analyze the properties of the range of the regularization operator $L$ in the family of Krein spaces $(\KK\x\EE, \K{\cdot}{\cdot}_\rho)$ obtained under the variation of the regularization parameter $\rho$. 

\begin{prop}\label{todos cerrados}
If $R(L)$ is a closed subspace of $(\KK\x\EE, \K{\cdot}{\cdot}_{\rho_0})$ for some $\rho_0\neq 0$, then $R(L)$ is a closed subspace of $(\KK\x\EE, \K{\cdot}{\cdot}_{\rho})$ for each $\rho\neq 0$.
\end{prop}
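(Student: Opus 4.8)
The plan is to deduce the statement directly from the characterization of closedness already established in Proposition \ref{props rho fijo}. Its first item asserts that $R(L)$ is closed if and only if $N(T)+N(V)$ is a closed subspace of $\HH$. The decisive feature is that this condition is phrased purely in terms of $T$ and $V$ and makes no reference whatsoever to the regularization parameter $\rho$. Thus I would argue as follows: assuming $R(L)$ is closed in $(\KK\x\EE,\K{\cdot}{\cdot}_{\rho_0})$ for some $\rho_0\neq 0$, Proposition \ref{props rho fijo}~{\it i} forces $N(T)+N(V)$ to be closed; since this conclusion does not mention $\rho$, applying the same proposition with an arbitrary $\rho\neq 0$ returns that $R(L)$ is closed in $(\KK\x\EE,\K{\cdot}{\cdot}_{\rho})$.

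For this reduction to be legitimate one must first make sure that the assertion ``$R(L)$ is closed'' is unambiguous, i.e. that it does not secretly depend on $\rho$ through the topology of $\KK\x\EE$. Here I would verify that the norm topology of $(\KK\x\EE,\K{\cdot}{\cdot}_\rho)$ coincides with the product topology of $\KK\x\EE$ for every $\rho\neq 0$. Fixing fundamental decompositions $\KK=\KK_+\dotplus\KK_-$ and $\EE=\EE_+\dotplus\EE_-$ with associated Hilbert norms $\|\cdot\|_\KK$ and $\|\cdot\|_\EE$, one readily produces a fundamental decomposition of $(\KK\x\EE,\K{\cdot}{\cdot}_\rho)$ by grouping the positive and negative parts according to the sign of $\rho$, and its associated Hilbert norm is $\big(\|y\|_\KK^2+|\rho|\,\|z\|_\EE^2\big)^{1/2}$. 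Since $\rho\neq 0$ is a fixed constant, this norm is equivalent to the product norm $\big(\|y\|_\KK^2+\|z\|_\EE^2\big)^{1/2}$, so all these Krein space topologies agree. In particular, closedness of the fixed subset $R(L)$ is a genuinely $\rho$-free property.

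With this topological point settled the conclusion is immediate; indeed, one could even bypass Proposition \ref{props rho fijo} entirely, since $R(L)$ is a single $\rho$-independent subset of $\KK\x\EE$ whose closedness is decided by the $\rho$-independent product topology. The only step requiring any care -- and hence the main (if modest) obstacle -- is precisely the verification that the norm topology does not vary with $\rho$; everything else is a direct appeal to the already-proven characterization of Proposition \ref{props rho fijo}~{\it i}.
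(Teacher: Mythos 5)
Your proposal is correct, and its first paragraph is precisely the paper's proof: Proposition \ref{props rho fijo}~\emph{i} characterizes closedness of $R(L)$ by the closedness of $N(T)+N(V)$, a condition in which $\rho$ does not appear, so the property transfers from $\rho_0$ to every $\rho\neq 0$. The extra content of your proposal is the verification that the norm topology of the Krein space $(\KK\x\EE,\K{\cdot}{\cdot}_\rho)$ coincides with the product topology for every $\rho\neq 0$ (via the fundamental decomposition obtained by grouping $\KK_\pm$ and $\EE_\pm$ according to the sign of $\rho$, whose associated Hilbert norm is $\big(\|y\|_\KK^2+|\rho|\,\|z\|_\EE^2\big)^{1/2}$); this is correct and is left implicit in the paper, which tacitly treats ``closed'' as a $\rho$-free notion. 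As you observe, this remark even yields a more elementary proof that bypasses Proposition \ref{props rho fijo} entirely: $R(L)$ is a single fixed subset of $\KK\x\EE$, and its closedness is decided by a topology that does not vary with $\rho$. What the paper's route (and your first argument) buys in exchange is an intrinsic reformulation in terms of the data: closedness of $R(L)$ is equivalent to closedness of $N(T)+N(V)$ in $\HH$, which is the form actually used later in the pseudo-regularity results.
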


\begin{proof}
This is an immediate consequence of item {\it i} in Proposition \ref{props rho fijo}, because the closedness of $N(T)+N(V)$ does nos depend on $\rho$.
\end{proof}

In the following we introduce an interval of {\it admissible values} for the regularization parameter, which is determined by Problem \ref{pb 1}. Let $\CV$ denote the set of neutral vectors of the quadratic form associated to $V^\#V$:
\[
\CV=Q(V^\#V)=\setB{y\in\HH\,\,:\,\,\K{Vy}{Vy}=0}.
\]
If $V^\#V$ is a positive (or negative) semidefinite operator in $\HH$, then $\CV$ coincides with $N(V)$.
But, if $V^\#V$ is \emph{indefinite}, the set $\CV$ is strictly larger than $N(V)$.
From now on $V^\#V$ is assumed to be indefinite; i.e. neither positive nor negative semidefinite.
A necessary condition for the existence of solutions for Problem \ref{pb 1} is that $T$ maps the set $\CV$ into the set of nonnegative vectors of the Krein space $\KK$, see \cite[Corollary 3.2]{GZMMP22}.   

In the following we rewrite several results from the previous section in the Krein space setting. The next result can be interpreted as
another manifestation of  Farkas' lemma, see \cite{Polik,Xia}. 

\begin{prop}\label{prop:asimov}
$T(\CV)$ is a nonnegative set of $\KK$ if and only if there exists $\rho\in\RR$ such that $T^\#T+\rho V^\#V\in\mc{L}(\HH)^+$.
\end{prop}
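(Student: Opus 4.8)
The plan is to recognize this statement as Proposition \ref{Azizov pencils} transported to the Krein space setting through the selfadjoint operators $A:=T^\#T$ and $B:=V^\#V$ acting on the Hilbert space $(\HH,\PI{\cdot}{\cdot})$. First I would record the two quadratic-form identities
\[
\PI{T^\#Ty}{y}=\K{Ty}{Ty}_\KK \qquad\text{and}\qquad \PI{V^\#Vy}{y}=\K{Vy}{Vy}_\EE, \qquad y\in\HH,
\]
which follow at once from the defining relation $\K{Tx}{y}_\KK=\PI{x}{T^\#y}$ of the Krein adjoint (the same one used above to compute $L^\#$), evaluating it on the second argument $Ty$ and recalling that $\K{Ty}{Ty}_\KK$ is real. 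That same relation shows that $T^\#T$ and $V^\#V$ are selfadjoint for $\PI{\cdot}{\cdot}$, so $A$ and $B$ are legitimate inputs for the results of Section \ref{Pencils}.

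With this dictionary at hand, the next step is to translate each side of the asserted equivalence. On the one hand $\CV=Q(V^\#V)=Q(B)$ by definition, and the requirement that $T(\CV)$ be a nonnegative set of $\KK$ means exactly $\K{Ty}{Ty}_\KK\geq 0$ for every $y\in\CV$, which by the first identity above is precisely
\[
\PI{Ay}{y}\geq 0 \qquad\text{for every $y\in Q(B)$}.
\]
On the other hand, the existence of some $\rho\in\RR$ with $T^\#T+\rho V^\#V\in\mc{L}(\HH)^+$ is, word for word, the statement that $I_\geq(A,B)\neq\varnothing$.

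Finally, since $B=V^\#V$ has been assumed indefinite, Proposition \ref{Azizov pencils} applies and yields that $I_\geq(A,B)\neq\varnothing$ if and only if $\PI{Ay}{y}\geq 0$ for all $y\in Q(B)$; chaining this equivalence with the two translations above closes the argument. I do not anticipate a real obstacle: the proof is entirely a change of language, and the only point deserving a line of care is the verification of the selfadjointness of $T^\#T$ and $V^\#V$ together with the quadratic-form identities, both of which are immediate from the adjoint relation already in use.
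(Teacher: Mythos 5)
Your proof is correct and is exactly the argument the paper intends: Proposition \ref{prop:asimov} is stated without proof precisely because it is the translation of Proposition \ref{Azizov pencils} to the Krein setting via $A:=T^\#T$, $B:=V^\#V$, using the identities $\PI{T^\#Ty}{y}=\K{Ty}{Ty}_\KK$, $\PI{V^\#Vy}{y}=\K{Vy}{Vy}_\EE$, the equality $\CV=Q(V^\#V)$, and the standing assumption that $V^\#V$ is indefinite. Your careful verification of the selfadjointness and of the quadratic-form dictionary is exactly the ``change of language'' the paper takes for granted.
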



If it is also assumed that $T^\#T$ is indefinite, it is easy to see that $T(\CV)$ is a nonnegative set of $\KK$ if and only if $R(L)$ is nonnegative in $(\KK\x\EE,\K{\cdot}{\cdot}_\rho)$ for some $\rho\neq 0$.

Let us also consider the subsets of $\HH$ where the quadratic form associated to $V^\#V$ takes positive and negative values:
\begin{align}\label{eq:def_PV}
\PV:= &\set{x\in\HH\,:\,\K{Vx}{Vx}>0}\quad\text{and}\\ \PVV:= &\set{x\in\HH\,:\,\K{Vx}{Vx}<0}. \nonumber
\end{align}
The next corollary determines the interval for the parameter $\rho$ in which $R(L)$ is a nonnegative subspace of $(\KK\x\EE,\K{\cdot}{\cdot}_\rho)$.

\begin{cor}\label{cor:intervalo}
Assume that $T(\CV)$ is a nonnegative set of $\KK$, and define
\begin{equation}\label{rhos}
\rho_-:=-\inf_{x\in\PV}\frac{\K{Tx}{Tx}}{\K{Vx}{Vx}}\quad\text{and}\quad\rho_+:=-\sup_{x\in\PVV}\frac{\K{Tx}{Tx}}{\K{Vx}{Vx}}.
\end{equation}
Then $\rho_-<+\infty$, $\rho_+>-\infty$ and $\rho_-\leq\rho_+$.
Moreover, $T^\#T+\rho V^\#V\in\mc{L}(\HH)^+$ if and only if $\rho\in[\rho_-,\rho_+]$.
\end{cor}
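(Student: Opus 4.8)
The plan is to recognize this corollary as a direct translation of Proposition~\ref{Azizov pencils} into the Krein-space language, via the dictionary $A:=T^\#T$ and $B:=V^\#V$. These are bounded selfadjoint operators on the Hilbert space $\HH$, and by hypothesis $B=V^\#V$ is indefinite. The key identities are
\[
\PI{Ax}{x}=\K{Tx}{Tx}_\KK\quad\text{and}\quad\PI{Bx}{x}=\K{Vx}{Vx}_\EE\qquad(x\in\HH),
\]
which follow at once from the definition of the Krein-space adjoint, since $\K{Tx}{Tx}_\KK=\PI{x}{T^\#Tx}=\PI{T^\#Tx}{x}$ and likewise for $V$. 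Under this dictionary $\CV=Q(B)$, while $\PV$ and $\PVV$ are exactly $\set{x\in\HH:\PI{Bx}{x}>0}$ and $\set{x\in\HH:\PI{Bx}{x}<0}$.

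First I would observe that the hypothesis ``$T(\CV)$ is a nonnegative set of $\KK$'' says precisely that $\PI{Ax}{x}\geq0$ for every $x\in Q(B)$, i.e. condition~\eqref{sufi pencil noneg} holds. Proposition~\ref{Azizov pencils} then yields $I_\geq(A,B)\neq\varnothing$ and $I_\geq(A,B)=[\lambda_-,\lambda_+]$ with $\lambda_-\leq\lambda_+$, where, as recorded in its proof, $\lambda_-=-\mu_+$ and $\lambda_+=-\mu_-$ in the notation of Theorem~\ref{Azizov para pencils}. Since $A+\rho B=T^\#T+\rho V^\#V$, the equivalence $T^\#T+\rho V^\#V\in\mc{L}(\HH)^+\iff\rho\in[\lambda_-,\lambda_+]$ is immediate; it remains only to check that $\lambda_\pm$ coincide with the $\rho_\pm$ defined in~\eqref{rhos}.

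For this I would use the homogeneity of the Rayleigh-type quotient: for any $x$ with $\PI{Bx}{x}>0$ the vector $z=x/\PI{Bx}{x}^{1/2}$ satisfies $\PI{Bz}{z}=1$ and $\PI{Az}{z}=\PI{Ax}{x}/\PI{Bx}{x}$, so that
\[
\mu_+=\inf_{\{z:\PI{Bz}{z}=1\}}\PI{Az}{z}=\inf_{x\in\PV}\frac{\PI{Ax}{x}}{\PI{Bx}{x}}=\inf_{x\in\PV}\frac{\K{Tx}{Tx}}{\K{Vx}{Vx}},
\]
and an identical normalization with $\PI{Bx}{x}=-1$ gives $\mu_-=\sup_{x\in\PVV}\K{Tx}{Tx}/\K{Vx}{Vx}$. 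Hence $\lambda_-=-\mu_+=\rho_-$ and $\lambda_+=-\mu_-=\rho_+$. Finally, the three boundedness and ordering assertions are read directly off Theorem~\ref{Azizov para pencils}: $\mu_+>-\infty$ gives $\rho_-<+\infty$, $\mu_-<+\infty$ gives $\rho_+>-\infty$, and $\mu_-\leq\mu_+$ gives $\rho_-\leq\rho_+$.

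The argument is essentially bookkeeping, so I do not expect a serious obstacle; the only point demanding care is keeping the sign conventions straight and making sure that the infimum taken over the affine constraint $\{\PI{Bx}{x}=1\}$ in Theorem~\ref{Azizov para pencils} matches the infimum of the quotient over the open cone $\PV$ appearing in~\eqref{rhos}. The scaling observation above settles exactly that.
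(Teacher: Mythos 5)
Your overall route is exactly the one the paper intends: Corollary \ref{cor:intervalo} is stated there with no proof at all, as a direct Krein-space rewriting of Proposition \ref{Azizov pencils} under the dictionary $A=T^\#T$, $B=V^\#V$, and your translation of the hypothesis ($T(\CV)$ nonnegative $\iff$ \eqref{sufi pencil noneg}), of the sets $\CV=Q(B)$, $\PV$, $\PVV$, and of the endpoints via homogeneity of the Rayleigh quotient is precisely that rewriting.

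There is, however, one step whose justification fails as written, and it sits exactly at the point you flagged as delicate. The normalization on the negative side is \emph{not} ``identical'' to the one on the positive side: if $\PI{Bx}{x}<0$ and $y=x/|\PI{Bx}{x}|^{1/2}$, then $\PI{By}{y}=-1$ but
\[
\PI{Ay}{y}=\frac{\PI{Ax}{x}}{|\PI{Bx}{x}|}=-\frac{\PI{Ax}{x}}{\PI{Bx}{x}},
\]
so that
\[
\sup_{\{y:\,\PI{By}{y}=-1\}}\PI{Ay}{y}=-\inf_{x\in\PVV}\frac{\PI{Ax}{x}}{\PI{Bx}{x}},
\]
which is in general different from $\sup_{x\in\PVV}\PI{Ax}{x}/\PI{Bx}{x}$. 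Hence your identity $\mu_-=\sup_{x\in\PVV}\K{Tx}{Tx}/\K{Vx}{Vx}$ is false if $\mu_-$ is read literally from display \eqref{mus}. What saves the argument is that \eqref{mus} itself is misstated: as written (supremum of the functional $\PI{Ay}{y}$ over the slice), $\mu_-$ need not even be finite --- take $A=I$ and $B$ an indefinite symmetry; the slice $\{y:\PI{By}{y}=-1\}$ is then unbounded and $\sup\PI{Ay}{y}=\sup\|y\|^{2}=+\infty$. The quantity for which Theorem \ref{Azizov para pencils} (i.e., the Krein--Smul'jan result) is true, and which the paper actually uses elsewhere (see the formulas for $\lambda_\pm$ in the Introduction and in \eqref{algo_1}), is the supremum of the quotient $\PI{Ay}{y}/\PI{By}{y}$ over $\PVV$, which by degree-zero homogeneity may be computed on the slice. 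With that reading your identification $\rho_+=-\mu_-=\lambda_+$ is immediate and the proof is complete; the positive-side identification is unaffected since on the slice $\PI{Bz}{z}=1$ the functional and the quotient coincide. So your conclusion and formulas are correct, but you should make the sign flip on the negative slice explicit and note the correction to \eqref{mus}: checked against the paper's literal text, the ``identical normalization'' step is where the proof would break.
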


\medskip

Hereafter we make the following assumptions.

\begin{hyp}\label{hyp:iniciales}
$T^\#T$ and $V^\#V$ are indefinite operators acting in $\HH$ and 
\[
T(\CV)\ \ \text{is a nonnegative subset of $\KK$}.
\]
\end{hyp}

The indefiniteness of  $T^\#T$ implies that $0\notin[\rho_-,\rho_+]$, i.e. the interval $[\rho_-,\rho_+]$ is either
contained in the positive or in the negative real axis.
In fact, it can be contained in either one of them,
as the following example illustrates.

\begin{example}\label{eje} 
Consider surjective operators $T\in\mc{L}(\HH,\KK)$ and $V\in\mc{L}(\HH,\EE)$ such that
$T^\#T$ and $V^\#V$ can be represented, according to some decomposition $\HH=\HH_+\oplus\HH_-$, as
\[
T^\#T=\begin{bmatrix}
\alpha I&0\\
0&\beta I\end{bmatrix}\quad\text{and}\quad V^\#V=\begin{bmatrix}I&0\\0&-I\end{bmatrix},
\]
with $\alpha,\beta\in\RR$. If $\rho\in\RR$, and $x\in\HH$ is written as $x=x_++x_-$ with $x_\pm\in\HH_\pm$,
then
\[
\PI{(T^\#T+\rho V^\#V)x}{x}=(\alpha+\rho)\|x_+\|^2+(\beta-\rho)\|x_-\|^2.	
\]
Thus, $T^\#T+\rho V^\#V\in\mc{L}(\HH)^+$ if and only if $\alpha+\rho\geq0$ and $\beta-\rho\geq0$. Suppose that $\alpha=-1$ and $\beta=2$, then it is immediate
that $\rho_-=1$ and $\rho_+=2$. If we now consider the case when $\alpha=2$ and $\beta=-1$, then $\rho_-=-2$ and $\rho_+=-1$.
\end{example}

\begin{obs}
If we denote $\CT=Q(T^\#T)=\{x\in\HH: \K{Tx}{Tx}_\KK=0\}$, Hypotheses \ref{hyp:iniciales} also imply that $V(\CT)$ is either a nonnegative or a nonpositive set of $\EE$,
depending on whether $[\rho_-,\rho_+]$ is contained in the positive or in the negative real axis. Indeed, if
$\rho\in[\rho_-,\rho_+]$ then
\[
\rho\K{Vx}{Vx}=\PI{(T^\#T+\rho V^\#V)x}{x}\geq 0,\quad\quad\text{for all $x\in\CT$}.
\]
\end{obs}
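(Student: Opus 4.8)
The plan is to reduce everything to the Hilbert-space quadratic forms $\PI{T^\#Tx}{x}$ and $\PI{V^\#Vx}{x}$ and then exploit the positivity of the pencil on the interval $[\rho_-,\rho_+]$. First I would use the Krein adjoint relation $\K{Tx}{y}_\KK=\PI{x}{T^\#y}$ to rewrite the two indefinite quadratic forms as $\K{Tx}{Tx}_\KK=\PI{T^\#Tx}{x}$ and $\K{Vx}{Vx}_\EE=\PI{V^\#Vx}{x}$; in particular $\CT=Q(T^\#T)=\set{x\in\HH:\PI{T^\#Tx}{x}=0}$. Thus proving that $V(\CT)$ is nonnegative (resp.\ nonpositive) amounts to showing $\PI{V^\#Vx}{x}\geq 0$ (resp.\ $\leq 0$) for every $x$ with $\PI{T^\#Tx}{x}=0$.

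Next I would invoke Proposition \ref{prop:asimov} together with Corollary \ref{cor:intervalo}: under Hypotheses \ref{hyp:iniciales}, the assumption that $T(\CV)$ is nonnegative guarantees $I_\geq(T^\#T,V^\#V)=[\rho_-,\rho_+]\neq\varnothing$, and the indefiniteness of $T^\#T$ forces $0\notin[\rho_-,\rho_+]$, so the interval lies entirely in either the positive or the negative real axis. This dichotomy is precisely the source of the two alternatives in the statement.

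The key step is then the displayed identity. Fixing $x\in\CT$ and any $\rho\in[\rho_-,\rho_+]$, since $T^\#T+\rho V^\#V\in\mc{L}(\HH)^+$ and $\PI{T^\#Tx}{x}=0$ we obtain
\[
\rho\,\PI{V^\#Vx}{x}=\PI{(T^\#T+\rho V^\#V)x}{x}\geq 0.
\]
To finish, I would select a $\rho$ of the appropriate sign: if $[\rho_-,\rho_+]$ is contained in the positive axis, pick $\rho>0$ in it and divide to get $\PI{V^\#Vx}{x}\geq 0$, so $V(\CT)$ is nonnegative; if the interval lies in the negative axis, pick $\rho<0$, and dividing reverses the inequality to $\PI{V^\#Vx}{x}\leq 0$, so $V(\CT)$ is nonpositive. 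Rewriting $\PI{V^\#Vx}{x}$ back as $\K{Vx}{Vx}_\EE$ yields the claim.

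There is no genuine obstacle here beyond bookkeeping: the only point that requires care is that a nonzero $\rho$ of a definite sign be available to divide by, which is exactly the consequence $0\notin[\rho_-,\rho_+]$ furnished by the indefiniteness of $T^\#T$, together with correctly tracking the sign reversal in the negative-interval case.
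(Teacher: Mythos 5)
Your proof is correct and follows essentially the same route as the paper: the remark's own justification is exactly the identity $\rho\K{Vx}{Vx}=\PI{(T^\#T+\rho V^\#V)x}{x}\geq 0$ for $x\in\CT$ and $\rho\in[\rho_-,\rho_+]$, combined with the fact (stated just before Example \ref{eje}) that indefiniteness of $T^\#T$ forces $0\notin[\rho_-,\rho_+]$, so dividing by a $\rho$ of fixed sign gives the dichotomy. You have merely made explicit the bookkeeping that the paper leaves implicit.
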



Now, we apply the results obtained in Section \ref{Pencils} to the operator pencil determined by the selfadjoint operators $A:=T^\#T$ and $B:=V^\#V$. We start by analyzing the behaviour of the nullspace of $L^\#L=T^\#T+\rho V^\#V$ for each $\rho\in[\rho_-,\rho_+]$. 
As a consequence of Proposition \ref{nucleo del interior}, if $\rho_-\neq\rho_+$ then the nullspace of $L^\#L$ does not depend on $\rho$ when $\rho\in(\rho_-,\rho_+)$.

\begin{lema}\label{lema:nucleo_abierto} 
If $\rho_-\neq\rho_+$ then 
\[
N(T^\#T+\rho V^\#V)=\CT\cap\CV=N(T^\#T)\cap N(V) \quad\text{for all}\quad\rho\in(\rho_-,\rho_+).
\]
\end{lema}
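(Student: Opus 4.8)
The plan is to recognize the statement as the specialization of Proposition \ref{nucleo del interior} to the selfadjoint operators $A:=T^\#T$ and $B:=V^\#V$ acting on the Hilbert space $\HH$, followed by one extra step that translates $N(V^\#V)$ back into $N(V)$.

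First I would pin down the range of positiveness. Under Hypotheses \ref{hyp:iniciales}, Corollary \ref{cor:intervalo} shows that $T^\#T+\rho V^\#V\in\mc{L}(\HH)^+$ precisely when $\rho\in[\rho_-,\rho_+]$, that is $I_\geq(T^\#T,V^\#V)=[\rho_-,\rho_+]$, and it also gives $\rho_-\leq\rho_+$. Combined with the standing hypothesis $\rho_-\neq\rho_+$ this yields $\rho_-<\rho_+$, so the pair $A,B$ meets the hypotheses of Proposition \ref{nucleo del interior}.

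Second, I would invoke the first item of Proposition \ref{nucleo del interior}: for every $\rho\in(\rho_-,\rho_+)$,
\[
N(T^\#T+\rho V^\#V)=Q(T^\#T)\cap Q(V^\#V)=N(T^\#T)\cap N(V^\#V).
\]
By the very definitions $\CT=Q(T^\#T)$ and $\CV=Q(V^\#V)$, so the middle term is $\CT\cap\CV$; this already gives the first equality of the lemma and, in particular, the $\rho$-independence of the nullspace flagged just before the statement.

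It then remains to prove $N(T^\#T)\cap N(V^\#V)=N(T^\#T)\cap N(V)$. The inclusion $\supseteq$ is immediate from $N(V)\subseteq N(V^\#V)$. For $\subseteq$ I expect the only genuine point of the argument: I would use that $N(V^\#)=R(V)^{\ort}$, so that surjectivity of $V$ forces $N(V^\#)=\EE^{\ort}=\{0\}$, i.e. $V^\#$ is injective; hence $V^\#Vx=0$ implies $Vx=0$, giving $N(V^\#V)=N(V)$. This is the step where the nondegeneracy of the Krein space $\EE$ (through $R(V)=\EE$) is indispensable — were $R(V)$ degenerate, $N(V^\#V)$ could be strictly larger than $N(V)$ — so this final reduction, rather than the pencil machinery of Section \ref{Pencils}, is the real content beyond Proposition \ref{nucleo del interior}.
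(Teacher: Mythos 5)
Your proposal is correct and follows essentially the same route as the paper, which states this lemma without proof as an immediate consequence of Proposition \ref{nucleo del interior} applied to $A=T^\#T$ and $B=V^\#V$ (with Corollary \ref{cor:intervalo} supplying $I_\geq(T^\#T,V^\#V)=[\rho_-,\rho_+]$ and $\rho_-<\rho_+$). The one step you add -- passing from $N(T^\#T)\cap N(V^\#V)$ to $N(T^\#T)\cap N(V)$ via the injectivity of $V^\#$ -- is precisely what the paper leaves tacit, and your remark that it hinges on the surjectivity of $V$ assumed in Problem \ref{pb 1} is accurate: under the section's standing assumption that $V$ merely has closed range, $R(V)$ could be a degenerate subspace of $\EE$, in which case $N(V^\#V)=V^{-1}\big(R(V)^{\circ}\big)$ strictly contains $N(V)$ and the second equality of the lemma can genuinely fail.
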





Assume that $\rho_-\neq\rho_+$. To characterize the nullspace of the operators associated to the extreme values of the interval, note that the sets defined in \eqref{Ms} are given by:
\begin{align*}
\mc{M}_+= \displaystyle{\set{x\in\PV\,:\,\frac{\K{Tx}{Tx}}{\K{Vx}{Vx}}=-\rho_-}}
\end{align*}
and
\begin{align*}
\mc{M}_-= &\displaystyle{\set{x\in\PVV\,:\,\frac{\K{Tx}{Tx}}{\K{Vx}{Vx}}=-\rho_+}}.
\end{align*}


\begin{lema}\label{lema:nucleo_rho+}
The following conditions are satisfied:
   \begin{enumerate}[label=\roman*)]
\item if $\rho_-\neq\rho_+$, then $N(T^\#T+\rho_\pm V^\#V)=\mc{M}_\mp\cup \parentesis{N(T^\#T)\cap N(V)}$;
\item if $\rho_-=\rho_+$, then $\M_+=\M_-$ and $N(T^\#T+\rho_\pm V^\#V)=\mc{M}_+\cup \parentesis{\CT\cap\CV}$.
\end{enumerate}
\end{lema}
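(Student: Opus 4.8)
The plan is to recognize this lemma as the Krein-space translation of Lemma \ref{lema:nucleo_lambda+}, specialized to the selfadjoint operators $A:=T^\#T$ and $B:=V^\#V$ acting on the Hilbert space $\HH$. First I would set up the dictionary between the two settings. Both $T^\#T$ and $V^\#V$ are selfadjoint on $\HH$ (from $\PI{T^\#Tx}{y}=\K{Tx}{Ty}_\KK$ and the analogous identity for $V$), and by Hypotheses \ref{hyp:iniciales} the operator $B=V^\#V$ is indefinite. Since $T(\CV)$ is nonnegative, Corollary \ref{cor:intervalo} (itself a consequence of Proposition \ref{Azizov pencils}) gives $I_\geq(A,B)=[\rho_-,\rho_+]$, so $\rho_\pm$ play exactly the role of $\la_\pm$ from Section \ref{Pencils}. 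Moreover, because $\PI{Ax}{x}=\K{Tx}{Tx}_\KK$ and $\PI{Bx}{x}=\K{Vx}{Vx}_\EE$ for every $x\in\HH$, the sets $\M_\pm$ attached to $A$ and $B$ via \eqref{Ms} are precisely the sets displayed immediately before the statement; when $\rho_-\neq\rho_+$ one invokes the Remark following \eqref{Ms} to replace the condition $\PI{Bx}{x}\neq0$ by the sharper $x\in\PV$, resp. $x\in\PVV$.

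With this dictionary in place, the first item is obtained by applying the corresponding item of Lemma \ref{lema:nucleo_lambda+}, which yields
\[
N(T^\#T+\rho_\pm V^\#V)=\M_\mp\cup\parentesis{N(T^\#T)\cap N(V^\#V)}.
\]
It then remains only to replace $N(V^\#V)$ by $N(V)$ inside the intersection, and this is the one point that is not purely formal: in a Krein space one only has $N(V)\subseteq N(V^\#V)$, possibly with strict inclusion, so the identity is not automatic. However, the equality $N(T^\#T)\cap N(V^\#V)=N(T^\#T)\cap N(V)$ has already been recorded in Lemma \ref{lema:nucleo_abierto} (equivalently, it follows from $\CT\cap\CV=Q(A)\cap Q(B)=N(A)\cap N(B)$ in Proposition \ref{nucleo del interior} together with the nondegeneracy of $R(V)=\EE$, which forces $N(V^\#V)=N(V)$). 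Citing Lemma \ref{lema:nucleo_abierto} therefore closes the first item.

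For the second item I would repeat the argument but invoke the degenerate case of Lemma \ref{lema:nucleo_lambda+} instead. Since $\rho_-=\rho_+$, that result yields simultaneously $\M_+=\M_-$ and
\[
N(T^\#T+\rho_\pm V^\#V)=\M_+\cup\parentesis{Q(A)\cap Q(B)}=\M_+\cup\parentesis{\CT\cap\CV},
\]
which is exactly the claimed formula; here no replacement of $N(V^\#V)$ by $N(V)$ is required, since the degenerate case is stated directly in terms of the neutral sets $\CT$ and $\CV$. Overall the proof is a translation, and the only substantive checks are the verification of the hypotheses of Lemma \ref{lema:nucleo_lambda+} (indefiniteness of $V^\#V$ and $I_\geq(T^\#T,V^\#V)=[\rho_-,\rho_+]$) and the identification $N(T^\#T)\cap N(V^\#V)=N(T^\#T)\cap N(V)$ used in the first item; I expect the latter to be the only place where the specific Krein-space structure genuinely enters.
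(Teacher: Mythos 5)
Your proposal is correct and takes essentially the same approach as the paper: the paper gives no separate proof of this lemma, presenting it precisely as the Krein-space rewriting of Lemma \ref{lema:nucleo_lambda+} applied to $A=T^\#T$ and $B=V^\#V$ (with $I_\geq(A,B)=[\rho_-,\rho_+]$ coming from Corollary \ref{cor:intervalo} and the $\mc{M}_\pm$ identified via the remark after \eqref{Ms}), which is exactly your translation argument. Your additional care in justifying the replacement of $N(V^\#V)$ by $N(V)$ through Lemma \ref{lema:nucleo_abierto} (equivalently, nondegeneracy of $R(V)$) addresses a point the paper silently glosses over, and you handle it correctly.
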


Combining Lemma \ref{lema:nucleo_abierto} with item {\it ii} in Proposition \ref{props rho fijo}, we have:

\begin{prop}\label{prop_nucleo} 
If $\rho_-\neq\rho_+$ then the following conditions are equivalent:
   \begin{enumerate}[label=\roman*)]
\item $R(L)$ is a nondegenerate subspace of $(\KK\x\EE,\K{\cdot}{\cdot}_\rho)$ for some $\rho\in(\rho_-,\rho_+)$;
\item $R(L)$ is a nondegenerate subspace of $(\KK\x\EE,\K{\cdot}{\cdot}_\rho)$ for all $\rho\in(\rho_-,\rho_+)$;
\item $N(T^\#T)\cap N(V)=N(T)\cap N(V)$.
\end{enumerate}
\end{prop}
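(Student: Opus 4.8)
The plan is to combine the characterization of nondegeneracy from item \textit{ii} of Proposition \ref{props rho fijo} with the $\rho$-independence of the nullspace provided by Lemma \ref{lema:nucleo_abierto}. The crucial observation is that condition \textit{iii} does not involve $\rho$ at all; so once the nondegeneracy condition is rewritten in a form that is manifestly independent of $\rho$ on the open interval $(\rho_-,\rho_+)$, the equivalence of \textit{i}, \textit{ii} and \textit{iii} becomes immediate, and the whole content reduces to the difference between a ``some $\rho$'' and an ``all $\rho$'' quantifier.

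First I would recall that $L^\#L=T^\#T+\rho V^\#V$, so that by item \textit{ii} of Proposition \ref{props rho fijo}, for a fixed $\rho\neq 0$ the range $R(L)$ is nondegenerate in $(\KK\x\EE,\K{\cdot}{\cdot}_\rho)$ if and only if
\[
N(T^\#T+\rho V^\#V)=N(T)\cap N(V).
\]
On the other hand, since $\rho_-\neq\rho_+$, Lemma \ref{lema:nucleo_abierto} gives $N(T^\#T+\rho V^\#V)=N(T^\#T)\cap N(V)$ for every $\rho\in(\rho_-,\rho_+)$. Substituting this identity into the previous display, for any $\rho\in(\rho_-,\rho_+)$ the nondegeneracy of $R(L)$ is equivalent to
\[
N(T^\#T)\cap N(V)=N(T)\cap N(V),
\]
which is precisely condition \textit{iii} and carries no dependence on $\rho$.

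With this reduction in hand the three implications are routine. The implication \textit{iii}$\,\Rightarrow\,$\textit{ii} follows because, assuming \textit{iii}, the equivalence above forces $R(L)$ to be nondegenerate for \emph{every} $\rho\in(\rho_-,\rho_+)$; the implication \textit{ii}$\,\Rightarrow\,$\textit{i} is trivial, since $(\rho_-,\rho_+)$ is nonempty (as $\rho_-\neq\rho_+$ together with $\rho_-\leq\rho_+$ from Corollary \ref{cor:intervalo} yields $\rho_-<\rho_+$); and \textit{i}$\,\Rightarrow\,$\textit{iii} follows by applying the equivalence above to the particular interior $\rho$ for which nondegeneracy is assumed.

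There is essentially no genuine obstacle: all the analytic work has already been absorbed into Lemma \ref{lema:nucleo_abierto}, whose proof in turn rests on Proposition \ref{nucleo del interior}. The only point requiring a moment of care is to notice that the characterizing identity $N(T^\#T)\cap N(V)=N(T)\cap N(V)$ is the \emph{same} for every interior $\rho$, which is exactly what promotes a ``for some $\rho$'' statement to a ``for all $\rho$'' one. Accordingly, I would be explicit about invoking $\rho_-<\rho_+$ so that the open interval is nonempty and the step \textit{ii}$\,\Rightarrow\,$\textit{i} is not vacuous.
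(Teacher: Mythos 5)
Your proof is correct and follows essentially the same route as the paper, which states this proposition precisely as the combination of item \emph{ii} of Proposition \ref{props rho fijo} with Lemma \ref{lema:nucleo_abierto}, the key point in both cases being that the resulting characterization $N(T^\#T)\cap N(V)=N(T)\cap N(V)$ is independent of $\rho$. No gaps to report.
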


\subsection{Pseudo-regularity and regularity}

In order to prove the main result of this section, we first show that pseudo-regularity is an invariant property in $(\rho_-,\rho_+)$.

 



\begin{prop}\label{prop_todos_pseudo_regulares}
Assume that $\rho_-\neq\rho_+$. If $R(L)$ is a pseudo-regular subspace of $(\KK\x\EE,\K{\cdot}{\cdot}_{\rho_0})$ for some
$\rho_0\in(\rho_-,\rho_+)$, then it is a 
pseudo-regular subspace of $(\KK\x\EE,\K{\cdot}{\cdot}_\rho)$ for all $\rho\in(\rho_-,\rho_+)$.
\end{prop}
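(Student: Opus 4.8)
The plan is to reduce pseudo-regularity to the two conditions furnished by item \emph{iii} of Proposition \ref{props rho fijo}---closedness of $N(T)+N(V)$ and closedness of $R(L^\#L)$---and then to observe that only the second of these can possibly depend on the parameter $\rho$.

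First I would set $A:=T^\#T$ and $B:=V^\#V$, which under Hypotheses \ref{hyp:iniciales} are selfadjoint operators with $B$ indefinite. Corollary \ref{cor:intervalo} then identifies $I_\geq(A,B)=[\rho_-,\rho_+]$, and since $\rho_-\neq\rho_+$ together with the standing inequality $\rho_-\leq\rho_+$ forces $\rho_-<\rho_+$, the hypotheses of Corollary \ref{rangos cerrados} are in force.

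Next, writing $L^\#L=A+\rho B$, I would invoke item \emph{iii} of Proposition \ref{props rho fijo}: for each $\rho\neq 0$ the range $R(L)$ is pseudo-regular in $(\KK\x\EE,\K{\cdot}{\cdot}_\rho)$ if and only if both $N(T)+N(V)$ and $R(A+\rho B)$ are closed. Since the subspace $N(T)+N(V)$ involves neither $T^\#T$, $V^\#V$, nor $\rho$, its closedness is a parameter-independent condition that holds for every admissible value or for none. Hence the entire statement collapses to the invariance, on $(\rho_-,\rho_+)$, of the closedness of $R(A+\rho B)$.

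That invariance is exactly the content of Corollary \ref{rangos cerrados}: from pseudo-regularity at $\rho_0\in(\rho_-,\rho_+)$ one reads off that $N(T)+N(V)$ is closed and $R(A+\rho_0 B)$ is closed, whence $A+\rho B$ has closed range for every $\rho\in(\rho_-,\rho_+)$; feeding this back into item \emph{iii} of Proposition \ref{props rho fijo} yields pseudo-regularity of $R(L)$ at every $\rho\in(\rho_-,\rho_+)$. There is no serious obstacle beyond correctly assembling these pieces, the only input of substance being Corollary \ref{rangos cerrados}, whose proof itself rests on Lemma \ref{lema:normas_equivalentes_p} and Douglas' theorem. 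The one point demanding care is to confirm that the interval $[\rho_-,\rho_+]$ supplied by Corollary \ref{cor:intervalo} genuinely coincides with $I_\geq(A,B)$ and is nondegenerate, so that Corollary \ref{rangos cerrados} is truly applicable---which is precisely where the hypothesis $\rho_-\neq\rho_+$ enters.
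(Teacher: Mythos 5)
Your proof is correct and follows essentially the same route as the paper's: reduce pseudo-regularity to the two closedness conditions of item \emph{iii} in Proposition \ref{props rho fijo}, observe that closedness of $N(T)+N(V)$ does not depend on $\rho$, and transfer closedness of $R(T^\#T+\rho V^\#V)$ across $(\rho_-,\rho_+)$ via Corollary \ref{rangos cerrados}. Your extra care in verifying that $I_\geq(T^\#T,V^\#V)=[\rho_-,\rho_+]$ with $\rho_-<\rho_+$ and $V^\#V$ indefinite (so that Corollary \ref{rangos cerrados} genuinely applies) is a detail the paper leaves implicit, but the argument is the same.
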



\begin{proof} 
Assume that $R(L)$ is a pseudo-regular subspace of $(\KK\x\EE,\K{\cdot}{\cdot}_{\rho_0})$ for some $\rho_0\in(\rho_-,\rho_+)$.
By item {\it iii} in Proposition \ref{props rho fijo}, we have that $N(T)+N(V)$ and $R(T^\#T+\rho_0 V^\#V)$ are
closed subspaces of $\HH$.
If $\rho\in(\rho_-,\rho_+)$, applying Corollary \ref{rangos cerrados} $R(T^\#T+\rho V^\#V)$ is also a closed subspace of $\HH$. 
By item {\it iii} in Proposition \ref{props rho fijo}, it follows that $R(L)$ is also a pseudo-regular subspace of $(\KK\x\EE,\K{\cdot}{\cdot}_\rho)$.
\end{proof}

\begin{cor}\label{prop:todos_regulares} 
Suppose that $\rho_-\neq\rho_+$. If $R(L)$ is a regular subspace of $(\KK\x\EE,\K{\cdot}{\cdot}_{\rho_0})$ for some $\rho_0\in(\rho_-,\rho_+)$, then it is a
regular subspace of $(\KK\x\EE,\K{\cdot}{\cdot}_\rho)$ for all $\rho \in(\rho_-,\rho_+)$.
\end{cor}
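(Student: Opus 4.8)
The plan is to exploit the characterization, recorded just before Proposition \ref{preeliminares pseudo regular} in the excerpt, that a subspace is regular precisely when it is both pseudo-regular and nondegenerate. Thus the strategy is to decouple the regularity hypothesis at $\rho_0$ into its two constituent properties, propagate each one separately across the whole open interval $(\rho_-,\rho_+)$ using the two propagation results already established, and then recombine them at an arbitrary $\rho$.

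First I would observe that since $R(L)$ is regular in $(\KK\x\EE,\K{\cdot}{\cdot}_{\rho_0})$, it is in particular pseudo-regular and nondegenerate as a subspace of that Krein space. Applying Proposition \ref{prop_todos_pseudo_regulares} to the pseudo-regularity at $\rho_0$, I conclude that $R(L)$ is pseudo-regular in $(\KK\x\EE,\K{\cdot}{\cdot}_\rho)$ for every $\rho\in(\rho_-,\rho_+)$.

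Next I would handle nondegeneracy. Since $R(L)$ is nondegenerate in $(\KK\x\EE,\K{\cdot}{\cdot}_{\rho_0})$ for the particular value $\rho_0\in(\rho_-,\rho_+)$, condition {\it i} of Proposition \ref{prop_nucleo} holds; hence condition {\it ii} holds as well, so $R(L)$ is nondegenerate in $(\KK\x\EE,\K{\cdot}{\cdot}_\rho)$ for all $\rho\in(\rho_-,\rho_+)$. (Equivalently, nondegeneracy is governed by the $\rho$-independent condition $N(T^\#T)\cap N(V)=N(T)\cap N(V)$, so it is automatically uniform across the interval.)

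Finally, fixing an arbitrary $\rho\in(\rho_-,\rho_+)$, I would combine the two conclusions: $R(L)$ is simultaneously pseudo-regular and nondegenerate in $(\KK\x\EE,\K{\cdot}{\cdot}_\rho)$, and therefore regular there. There is no real obstacle in this argument; the entire content has been front-loaded into Propositions \ref{prop_todos_pseudo_regulares} and \ref{prop_nucleo}, and the only thing to be careful about is to invoke the equivalence ``regular $\Leftrightarrow$ pseudo-regular $+$ nondegenerate'' in both directions—once to split the hypothesis at $\rho_0$, and once to reassemble the conclusion at the general $\rho$.
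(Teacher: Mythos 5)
Your proof is correct and follows the paper's own argument exactly: the paper likewise splits regularity at $\rho_0$ into nondegeneracy and pseudo-regularity, propagates these via Proposition \ref{prop_nucleo} and Proposition \ref{prop_todos_pseudo_regulares}, and recombines them at an arbitrary $\rho\in(\rho_-,\rho_+)$.
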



\begin{proof}
Since a subspace is regular if and only if it is nondegenerate and pseudo-regular,
the statement is an immediate consequence of Proposition \ref{prop_nucleo} and Proposition \ref{prop_todos_pseudo_regulares}.
\end{proof}


Now, we deepen into the analysis of the case in which $R(L)$ is a regular subspace of
$(\KK\x\EE,\K{\cdot}{\cdot}_\rho)$ for some $\rho\in[\rho_-,\rho_+]$. First, we show that this regularity is equivalent to
$\CV$ being a uniformly positive subset of $\HH$ with respect to the form induced by $T^\#T$.
For the sake of simplicity, from now on we assume that
\[
N(T)\cap N(V)=\set{0}, 
\]
which is not necessarily true in the general case but does not imply a loss of generality. With minor adjustments the following results can be expressed for the general case.

\begin{prop}\label{prop:uniformemente} 
Assume that $N(T)\cap N(V)=\set{0}$. Then, the following conditions are equivalent:
   \begin{enumerate}[label=\roman*)]
\item there exists $\alpha>0$ such that $\K{Ty}{Ty}\geq\alpha\|y\|^2$ for every $y\in\CV$;
\item $T(\CV)$ is a uniformly positive set of $\KK$ and $N(T)+N(V)$ is a closed subspace;
\item there exists $\rho\in\RR$ such that $T^\#T+\rho V^\#V$ is a positive definite operator;
\item there exists $\rho\in\RR$ such that $R(L)$ is a (closed) uniformly positive subspace of $(\KK\x\EE,\K{\cdot}{\cdot}_\rho)$.
\end{enumerate}
\end{prop}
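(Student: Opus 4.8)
The plan is to prove the equivalences by closing the cycle (i)$\Leftrightarrow$(iii)$\Leftrightarrow$(iv)$\Rightarrow$(ii)$\Rightarrow$(i), working throughout with $A:=T^\#T$ and $B:=V^\#V$ (both indefinite by Hypotheses~\ref{hyp:iniciales}) and using the identities $\K{Ty}{Ty}=\PI{Ay}{y}$, $\K{Vy}{Vy}=\PI{By}{y}$ and $\CV=Q(B)$. With these, item (i) is \emph{verbatim} condition \eqref{sufi pencil posi} for the pencil $A+\la B$, while (iii) reads $I_>(A,B)\neq\varnothing$. Since $B$ is indefinite and $I_\geq(A,B)=[\rho_-,\rho_+]\neq\varnothing$ by Hypotheses~\ref{hyp:iniciales} and Corollary~\ref{cor:intervalo}, Theorem~\ref{pencil positivo} yields (i)$\Leftrightarrow$(iii) at once (and, when they hold, $\rho_-<\rho_+$).

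For (iii)$\Leftrightarrow$(iv) I would exploit that $N(L)=N(T)\cap N(V)=\{0\}$, together with $\PI{L^\#Lx}{x}=\K{Lx}{Lx}_\rho$ and $L^\#L=A+\rho B$. If (iii) holds, fix $\rho$ with $A+\rho B$ positive definite; then $\|Lx\|^2\geq\K{Lx}{Lx}_\rho=\PI{(A+\rho B)x}{x}\geq\alpha\|x\|^2$, so $L$ is bounded below and $R(L)$ is closed, and moreover $\K{Lx}{Lx}_\rho\geq\alpha\|x\|^2\geq\alpha\|L\|^{-2}\|Lx\|^2$, i.e. $R(L)$ is (closed and) uniformly positive, which is (iv). Conversely, (iv) gives a closed, uniformly positive $R(L)$; since $N(L)=\{0\}$, $L$ is bounded below, $\|Lx\|\geq c\|x\|$, and uniform positivity gives $\PI{(A+\rho B)x}{x}=\K{Lx}{Lx}_\rho\geq\alpha\|Lx\|^2\geq\alpha c^2\|x\|^2$, so $A+\rho B$ is positive definite, which is (iii). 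The implication (iv)$\Rightarrow$(ii) is then routine: $R(L)$ closed forces $N(T)+N(V)$ closed by item {\it i} of Proposition~\ref{props rho fijo}, and for $y\in\CV$ one has $\K{Ty}{Ty}=\K{Ly}{Ly}_\rho\geq\alpha\|Ly\|^2\geq\alpha\|Ty\|^2$, so $T(\CV)$ is uniformly positive.

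The core is (ii)$\Rightarrow$(i). First I would record a clean consequence of the uniform positivity of $T(\CV)$: if $x\in N(T^\#T)\cap N(V)$ then $x\in N(V)\subseteq\CV$ and $\K{Tx}{Tx}=\PI{T^\#Tx}{x}=0$, so $0\geq\beta\|Tx\|^2$ forces $Tx=0$ and hence $x\in N(T)\cap N(V)=\{0\}$; thus $N(T^\#T)\cap N(V)=\{0\}$. Combined with the indefiniteness of $A$ and the nonnegativity of $A$ on $\CV$ (Hypotheses~\ref{hyp:iniciales}), a density argument then gives $N(A)\cap\CV=\{0\}$: a nonzero $y_0\in N(A)\cap\CV$ with $By_0\neq0$ would let one solve $\PI{B(y_0+tx)}{y_0+tx}=0$ at some $t^*\neq0$ for all $x$ in a dense set, yielding $0\leq\PI{A(y_0+t^*x)}{y_0+t^*x}=t^{*2}\PI{Ax}{x}$ and hence $A\geq0$ everywhere, a contradiction (and $By_0=0$ is excluded since $V$ is surjective, so $N(B)=N(V)$ and $y_0\in N(T^\#T)\cap N(V)=\{0\}$). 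Since by hypothesis $\K{Ty}{Ty}\geq\beta\|Ty\|^2$ on $\CV$, condition (i) is \emph{equivalent} to $T$ being bounded below on $\CV$, i.e. $\|Ty\|\geq c\|y\|$ for every $y\in\CV$, and this is what remains.

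To establish that bound I would argue by contradiction: if it fails there are $y_n\in\CV$, $\|y_n\|=1$, with $Ty_n\to0$. Writing $y_n=u_n+v_n$ with $u_n\in N(T)$ and $v_n\in N(T)^\bot$, the closedness of $R(T)$ makes $T$ bounded below on $N(T)^\bot$, so $v_n\to0$ and $\|u_n\|\to1$; consequently $\PI{Bu_n}{u_n}=\K{Vu_n}{Vu_n}\to0$ while $Au_n=T^\#Tu_n=0$. The closedness of $N(T)+N(V)$ with $N(T)\cap N(V)=\{0\}$ gives a positive angle between these nullspaces, whence $\mathrm{dist}(u_n,N(V))\geq\delta\|u_n\|$, and since $V$ is surjective this yields $\|Vu_n\|\geq c'>0$. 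Fixing a direction $x_0$ with $\PI{Ax_0}{x_0}<0$ (possible as $A$ is indefinite; note $x_0\notin\CV$, so $\PI{Bx_0}{x_0}\neq0$), I would solve $\PI{B(u_n+tx_0)}{u_n+tx_0}=0$ for a small \emph{nonzero} root $t_n$ — available because $\PI{Bu_n}{u_n}\to0$ while $u_n\notin\CV$ (as $N(A)\cap\CV=\{0\}$) — and then $z_n:=u_n+t_nx_0\in\CV$ satisfies, using $Au_n=0$, $\PI{Az_n}{z_n}=t_n^2\PI{Ax_0}{x_0}<0$, contradicting the nonnegativity of $A$ on $\CV$. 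The main obstacle is precisely arranging this perturbation so that a nonzero, controlled root $t_n$ exists (the discriminant requirement $\PI{Bu_n}{x_0}^2\geq\PI{Bx_0}{x_0}\PI{Bu_n}{u_n}$); I expect this to force letting $x_0$ depend on $n$ while keeping $\PI{Ax_0}{x_0}$ uniformly negative, using the lower bound $\|Vu_n\|\geq c'>0$ to guarantee that $\PI{Bu_n}{x_0}$ does not vanish in the limit.
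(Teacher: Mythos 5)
Your cycle (i)$\leftrightarrow$(iii)$\leftrightarrow$(iv)$\Rightarrow$(ii)$\Rightarrow$(i) is structured correctly, and three of its four legs are sound: (i)$\leftrightarrow$(iii) is exactly the paper's use of Theorem~\ref{pencil positivo}; your (iii)$\leftrightarrow$(iv) by direct norm estimates ($L$ injective and bounded below, $\PI{(T^\#T+\rho V^\#V)x}{x}=\K{Lx}{Lx}_\rho$) is correct and in fact more elementary than the paper, which never proves this equivalence directly but instead goes (ii)$\to$(iv) and (iv)$\to$(iii); and (iv)$\Rightarrow$(ii) is routine, as you say. Your preliminary reductions for (ii)$\Rightarrow$(i) are also correct: $N(T^\#T)\cap N(V)=\{0\}$, the density argument giving $N(T^\#T)\cap\CV=\{0\}$, the reduction of (i) to ``$T$ bounded below on $\CV$'', and the lower bound $\|Vu_n\|\geq c'>0$ from the closedness of $N(T)+N(V)$.

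The genuine gap is exactly the one you flag, and it is not a technicality: it is the load-bearing step of the whole proposition, since (ii)$\Rightarrow$(i) is the only exit from (ii) in your cycle. Here is why your perturbation cannot be completed as sketched. By Hypotheses~\ref{hyp:iniciales} there is $\rho\neq 0$ (say $\rho>0$) with $A+\rho B\geq 0$, where $A=T^\#T$, $B=V^\#V$; hence \emph{every} admissible direction $x_0$ with $\PI{Ax_0}{x_0}<0$ automatically satisfies $\PI{Bx_0}{x_0}>0$. So if (after passing to a subsequence) $\PI{Bu_n}{u_n}>0$, the opposite-sign case never occurs and the discriminant condition $(\mathrm{Re}\,\PI{Bx_0}{u_n})^2\geq\PI{Bx_0}{x_0}\PI{Bu_n}{u_n}$ must be earned. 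Suppose it fails for every such $x_0$ and every $n$: since $\PI{Bu_n}{u_n}\to 0$, this only says $\mathrm{Re}\,\PI{Bx_0}{u_n}\to 0$; applying it also to $ix_0$ and using that the open cone $\{x:\PI{Ax}{x}<0\}$ spans $\HH$, all you can conclude is that $V^\#Vu_n\rightharpoonup 0$ weakly. In infinite dimensions this is perfectly compatible with $\|Vu_n\|\geq c'>0$ (think of $Vu_n$ drifting weakly to zero along the neutral cone of $\EE$), so no contradiction follows; worse, this scenario annihilates precisely the pairings $\PI{Bu_n}{x_0}$ that your hoped-for $n$-dependent choice of $x_0$ would need to keep bounded away from zero. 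What is missing is a \emph{global} operator inequality rather than sequential estimates, and that is what the paper supplies: rewriting the uniform positivity of $T(\CV)$ as $\PI{(T^\#T-\alpha T^*T)y}{y}\geq 0$ for all $y\in Q(V^\#V)$, Proposition~\ref{Azizov pencils} yields a $\rho$ with $L^\#L=T^\#T+\rho V^\#V\geq\alpha T^*T$; Theorem~\ref{Doug} then gives $N(T)^\bot\subseteq R\big((L^\#L)^{1/2}\big)$, and the closedness of $N(T)+N(V)$ combined with the decomposition $\HH=N(T)^\bot\dotplus L^\#L(N(T))$ from \cite{CMS} upgrades this to invertibility of $L^\#L$, i.e.\ to (iii) and (iv). Some step of this strength (turning the hypothesis on the cone $\CV$ into an inequality valid on all of $\HH$) is what your argument still needs.
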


\begin{proof}
The equivalence {\it i$\leftrightarrow$iii} follows from Theorem \ref{pencil positivo} considering $A=T^\#T$ and $B=V^\#V$. 
\smallskip

\noi {\it i$\to$ii.}\ \  Suppose that there exists $\alpha>0$ such that $\K{Ty}{Ty}\geq\alpha\|y\|^2$ for every $y\in\CV$. In particular, this implies that $T(N(V))$ is closed. Then, by Proposition \ref{R(AB)}, $N(T)+N(V)$ is also closed.

Also, $T(\CV)$ is a uniformly positive set of $\KK$ because
\[
\K{Ty}{Ty}\geq\alpha\|y\|^2\geq\frac{\alpha}{\|T\|^{2}}\|Ty\|^2,\quad\quad\text{for every $y\in\CV$.}
\]

\noi {\it ii$\to$iv.}\ \  Assume that there exists $\alpha>0$ such that
$\K{Ty}{Ty}\geq\alpha\|Ty\|^2$ for every $y\in\CV$. Then, if $A:=T^\#T - \alpha T^*T$ and $B:=V^\#V$ the inequality is equivalent to 
\[
\PI{Ay}{y} \geq 0 \qquad \text{for every $y\in\HH$ such that $\PI{By}{y}=0$}.
\]
Hence, by Proposition \ref{Azizov pencils},  there exists $\rho\in\RR$ such that $T^\#T-\alpha T^*T+\rho V^\#V= A+\rho B\geq 0$. So, $\alpha T^*T\leq L^\#L$. Then, by Theorem \ref{Doug} we have that
\begin{equation}\label{Emi}
N(T)^\bot=R(T^*)\subseteq R\big((L^\#L)^{1/2}\big).
\end{equation}
Now, we want to prove that $L^\#L$ is invertible, which implies that $R(L)$ is regular. To do so, we make use of the following result borrowed from \cite{CMS}: if $C\in \mc{L}(\HH)^+$ and $\St$ is a closed subspace of $\HH$ such that $\St\cap N(C)=\{0\}$ and $C(\St)$ is closed, then $\HH=\St^\bot + C(\St)$, see Proposition 3.3 and Lemma 3.8 in \cite{CMS}. 

For our purpose, consider the positive semidefinite operator $L^\#L$ and the closed subspace $N(T)$. On the one hand, the closedness of $N(T)+N(V)$ implies that $R(L)=N(T)^\bot+N(V)^\bot$ is also a closed subspace of $\HH$. Moreover, by Proposition \ref{R(AB)}, $L^\#L(N(T))=V^\#V(N(T))$ is a closed subspace of $\KK$. Hence, combining \eqref{Emi} with the above mentioned result, 
\[
\HH=N(T)^\bot\dotplus L^\#L(N(T))\subseteq R\big((L^\#L)^{1/2}\big).
\]
Also, it is immediate that $N(L^\#L)=N(T)\cap N(V)=\set{0}$.
Hence, $(L^\#L)^{1/2}$ is invertible.  
In particular, $R(L^\#L)=\HH$ and
\[
\KK\x\EE=(L^\#)^{-1}\big(R(L^\#L)\big)=R(L) + N(L^\#)=R(L) + R(L)^{\ort},
\]
i.e. $R(L)$ is a regular subspace of $\KK\x\EE$.

\smallskip



\noi {\it iv$\to$iii.}\ \  Suppose that there exists $\rho\in\RR$ such that $R(L)$ is a (closed) uniformly positive subspace of $(\KK\x\EE,\K{\cdot}{\cdot}_\rho)$.
Then, $R(L^\#L)=\HH$ and Proposition \ref{props rho fijo} ensures that $N(L^\#L)=N(T)\cap N(V)=\set{0}$.
Hence, $L^\#L$ is a positive definite operator.
%
\end{proof}

As a consequence of the proof, $\rho\in\RR$ satisfies item {\it iii}\,  if and only if it satisfies item {\it iv}. Then, combining the above proposition with Theorem \ref{pencil positivo}, we have the following result.

\begin{thm}\label{regular en el interior}
If $\rho\in[\rho_-,\rho_+]$ is such that $R(L)$ is a regular subspace of $(\KK\x\EE,\K{\cdot}{\cdot}_\rho)$, then $\rho_-\neq\rho_+$ and $\rho\in(\rho_-,\rho_+)$.
\end{thm}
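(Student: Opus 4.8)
The plan is to reduce the claim to a combination of Theorem \ref{pencil positivo} and Proposition \ref{prop:uniformemente}, both of which are stated earlier in the excerpt. The setting is the linear pencil $P(\la)=A+\la B$ with $A:=T^\#T$ and $B:=V^\#V$, both assumed indefinite by Hypotheses \ref{hyp:iniciales}, so that by Corollary \ref{cor:intervalo} we have $I_\geq(A,B)=[\rho_-,\rho_+]$. The statement asserts that regularity of $R(L)$ at some $\rho\in[\rho_-,\rho_+]$ forces $\rho_-\neq\rho_+$ and $\rho$ to lie in the open interior $(\rho_-,\rho_+)$.

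First I would observe that regularity of $R(L)$ in $(\KK\x\EE,\K{\cdot}{\cdot}_\rho)$ is precisely item {\it iv} of Proposition \ref{prop:uniformemente} (under the running assumption $N(T)\cap N(V)=\set{0}$), since Proposition \ref{preeliminares regular} identifies closed uniformly positive subspaces with regular nonnegative ones, and $R(L)$ is automatically nonnegative when $\rho\in[\rho_-,\rho_+]$ because $L^\#L=T^\#T+\rho V^\#V\in\mc{L}(\HH)^+$ there. By the equivalence {\it iii$\leftrightarrow$iv} in that proposition (and the remark just before the theorem that the \emph{same} $\rho$ works for both items), regularity of $R(L)$ at $\rho$ is equivalent to $T^\#T+\rho V^\#V=A+\rho B$ being a positive definite operator, i.e. to $\rho\in I_>(A,B)$.

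Once this translation is made, I would simply invoke Theorem \ref{pencil positivo}. Since $B=V^\#V$ is indefinite and $I_\geq(A,B)=[\rho_-,\rho_+]$, that theorem states that if $I_>(A,B)\neq\varnothing$ then $\rho_-<\rho_+$ and $I_>(A,B)=(\rho_-,\rho_+)$. Our hypothesis guarantees $\rho\in I_>(A,B)$, so $I_>(A,B)$ is non empty; hence $\rho_-\neq\rho_+$ and $\rho\in(\rho_-,\rho_+)$, which is exactly the conclusion.

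The only subtle point, and the one I would check carefully rather than the core difficulty, is the passage from ``regular'' to ``positive definite $L^\#L$''. The forward direction is free: regularity gives nondegeneracy and pseudo-regularity, and via Proposition \ref{props rho fijo} together with $N(T)\cap N(V)=\set{0}$ one gets $N(L^\#L)=\set{0}$ and $R(L^\#L)$ closed; combined with nonnegativity of $L^\#L$ and the uniform positivity criterion this yields positive definiteness. There is no real obstacle here because all the heavy lifting—the equivalence of uniform positivity on $\CV$ with positive definiteness of the pencil, and the structure of $I_>(A,B)$ as the open interior—has already been done in Proposition \ref{prop:uniformemente} and Theorem \ref{pencil positivo}. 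The proof is therefore essentially a bookkeeping argument assembling these two earlier results.
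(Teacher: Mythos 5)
Your proposal is correct and follows essentially the same route as the paper: since $\rho\in[\rho_-,\rho_+]$ makes $R(L)$ nonnegative, regularity is converted (via Proposition \ref{preeliminares regular} and the observation that the same $\rho$ works in items \emph{iii} and \emph{iv} of Proposition \ref{prop:uniformemente}) into positive definiteness of $T^\#T+\rho V^\#V$, and then Theorem \ref{pencil positivo} forces $\rho_-<\rho_+$ and $\rho\in(\rho_-,\rho_+)$. This is exactly the combination the paper invokes in the sentence preceding the theorem.
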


Therefore, the regular case cannot occur in the boundary of the interval. The following corollary relates the lower bound that determines the uniformly positiveness of $T(\CV)$ (as a subset of $\KK$) with a closed interval contained in $(\rho_-,\rho_+)$ where the pencil is uniformly bounded from below.

%


\begin{cor}\label{cor:uniformemente} 
Given $\alpha>0$, the following conditions are equivalent:
   \begin{enumerate}[label=\roman*)]
\item $\K{Ty}{Ty}\geq\alpha\|y\|^2$ for every $y\in\CV$;
\item there exists an interval $[\eta_-,\eta_+]$
contained in $(\rho_-,\rho_+)$ such that $T^\#T+\rho V^\#V\geq\alpha I$ for every $\rho\in[\eta_-,\eta_+]$.
\end{enumerate}
\end{cor}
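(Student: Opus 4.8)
The plan is to reduce everything to Theorem \ref{pencil positivo} applied to the pair $A:=T^\#T$ and $B:=V^\#V$, using the dictionary $\K{Ty}{Ty}=\PI{T^\#Ty}{y}=\PI{Ay}{y}$, the identity $\CV=Q(V^\#V)=Q(B)$, and the fact that $[\rho_-,\rho_+]=I_\geq(A,B)$ (Corollary \ref{cor:intervalo}). Under this translation, item \emph{i)} is precisely the uniform positivity condition \eqref{sufi pencil posi}. So I would first record that \emph{i)}, via Theorem \ref{pencil positivo}, already forces $\rho_-<\rho_+$ and $I_>(A,B)=(\rho_-,\rho_+)$, which is what makes the open interval $(\rho_-,\rho_+)$ available as an ambient set for \emph{ii)}.

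For \emph{i)}$\Rightarrow$\emph{ii)} I would mimic the converse argument in the proof of Theorem \ref{pencil positivo}. Setting $A':=A-\alpha I$, item \emph{i)} says exactly that $A'$ and $B$ satisfy \eqref{sufi pencil noneg}, so Proposition \ref{Azizov pencils} yields that
\[
\set{\rho\in\RR:\ T^\#T+\rho V^\#V\geq\alpha I}=I_\geq(A',B)=[\eta_-,\eta_+]
\]
is a nonempty closed interval. It then remains to verify the containment $[\eta_-,\eta_+]\subseteq(\rho_-,\rho_+)$: for $\rho\in[\eta_-,\eta_+]$ the operator $A+\rho B\geq\alpha I>0$ is positive definite, hence $\rho\in I_>(A,B)=(\rho_-,\rho_+)$.

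The reverse implication \emph{ii)}$\Rightarrow$\emph{i)} is a one-line computation. Fixing any $\rho\in[\eta_-,\eta_+]$ and any $y\in\CV=Q(B)$, so that $\PI{By}{y}=0$, I would write
\[
\alpha\|y\|^2\leq\PI{(A+\rho B)y}{y}=\PI{Ay}{y}+\rho\PI{By}{y}=\PI{Ay}{y}=\K{Ty}{Ty},
\]
which is exactly condition \emph{i)}.

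I do not expect a genuine obstacle, as the corollary is essentially a quantitative repackaging of Theorem \ref{pencil positivo}. The only point demanding care is the strict containment $[\eta_-,\eta_+]\subseteq(\rho_-,\rho_+)$ rather than the weaker $[\eta_-,\eta_+]\subseteq[\rho_-,\rho_+]$: this relies on the identification $I_>(A,B)=(\rho_-,\rho_+)$ provided by Theorem \ref{pencil positivo}, and hence on \emph{i)} being in force to guarantee $I_>(A,B)\neq\varnothing$ in the first place.
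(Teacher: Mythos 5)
Your proposal is correct and follows essentially the same route as the paper: the forward direction unfolds the converse argument in the proof of Theorem \ref{pencil positivo} (setting $A':=T^\#T-\alpha I$ and applying Proposition \ref{Azizov pencils}, then locating $[\eta_-,\eta_+]$ inside $I_>(T^\#T,V^\#V)=(\rho_-,\rho_+)$), and the reverse direction is the same one-line evaluation of the quadratic form on $\CV$. The only difference is that you spell out the details the paper compresses into the phrase ``follows from the proof of Theorem \ref{pencil positivo}.''
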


\begin{proof}
Firstly, {\it i}$\ra${\it ii} follows from the proof of Theorem \ref{pencil positivo}, considering $A:=T^\#T$ and $B:=V^\#V$. Secondly, if $\rho\in[\eta_-,\eta_+]$ it is immediate that, for every $y\in\CV$,
\[
\K{Ty}{Ty}=\PI{T^\#Ty}{y}=\PI{(T^\#T+\rho V^\#V)y}{y}\geq \alpha \PI{y}{y}=\alpha \|y\|^2. \qedhere
\]
\end{proof}

\medskip

\subsection*{Acknowledgements}

The authors gratefully acknowledge the support of CONICET through the
grant PIP 11220200102127CO. F.~Mart\'{\i}nez Per\'{\i}a also acknowledges the support from UNLP 11X829 and PICT 2015-1505.

\end{document}